\pgfplotsset{compat=newest}
\DeclarePairedDelimiter{\ceil}{\lceil}{\rceil} 
\theoremstyle{plain}
\newtheorem{thm}{Theorem}[section]
\theoremstyle{plain}
\newtheorem{lem}[thm]{Lemma}
\newtheorem{cor}[thm]{Corollary}
\newtheorem{defi}[thm]{Definition}
\newtheorem{rem}[thm]{Remark}
\newtheorem{assumption}[thm]{Assumption}
\newtheorem{ex}[thm]{Example}
\newenvironment{enum_i}
{\begin{enumerate}}
	{\end{enumerate}}
\newcommand{\D}{\ensuremath{\mathcal{D}}}
\newcommand{\eps}{\ensuremath{\varepsilon}}
\newcommand{\R}{\ensuremath{\mathbb{R}}}
\newcommand{\N}{\ensuremath{\mathbb{N}}}
\newcommand{\E}{\ensuremath{\mathbb{E}}}
\newcommand{\ga}{\alpha}
\newcommand{\gb}{\beta}
\newcommand{\gd}{\delta}
\renewcommand{\gg}{\gamma}
\newcommand{\gk}{\kappa}
\newcommand{\gl}{\lambda}
\newcommand{\go}{\omega}
\newcommand{\gs}{\sigma}
\newcommand{\gG}{\Gamma}
\newcommand{\gL}{\Lambda}
\newcommand{\gO}{\Omega}
\newcommand{\cA}{\mathcal{A}}
\newcommand{\cB}{\mathcal{B}}
\newcommand{\cD}{\mathcal{D}}
\newcommand{\cK}{\mathcal{K}}
\newcommand{\cN}{\mathcal{N}}
\newcommand{\cO}{\mathcal{O}}
\newcommand{\cP}{\mathcal{P}}
\newcommand{\cT}{\mathcal{T}}
\newcommand{\cU}{\mathcal{U}}
\newcommand{\cV}{\mathcal{V}}
\newcommand{\1}{\mathbf{1}}
\newcommand{\bP}{\mathbb{P}}
\newcommand{\var}{{\rm Var}}
\newcommand*{\lrscript}[5]{{\vphantom{#1}}_{#2}^{#3}{#1}_{#4}^{#5}}
\newcommand*{\dualpair}[4]{\ensuremath{\lrscript{\langle}{#1}{}{}{} #3 ,
		#4 \rangle_{#2}}}
\newcommand{\be}{\begin {equation}}
\newcommand{\ee}{\end  {equation}}
\numberwithin{equation}{section} \allowdisplaybreaks[1]
\newcommand{\KL}{{Karhunen-Lo\`{e}ve }}
\definecolor{darkgreen}{rgb}{0,.6,0}
\newcommand{\MC}{Monte Carlo}
\newcommand{\MLMC}{multilevel \MC}
\newcommand{\grad}{\nabla}
\newcommand{\ol}{\overline}
\newcommand{\bee}{\begin {equation*}}
\newcommand{\eee}{\end {equation*}}
\title{A study of elliptic partial differential equations with jump diffusion coefficients}
\author{Andrea Barth \thanks{IANS\textbackslash SimTech, University of Stuttgart 
(\email{andrea.barth@mathematik.uni-stuttgart.de}).}
\and Andreas Stein \thanks{IANS\textbackslash SimTech, University of Stuttgart 
	(\email{andreas.stein@mathematik.uni-stuttgart.de}).}}
\begin{document}
	\maketitle
	
	\begin{abstract}
		As a simplified model for subsurface flows elliptic equations may be utilized. Insufficient measurements or uncertainty in those are commonly modeled by a random coefficient, which then accounts for the uncertain permeability of a given medium.  
		As an extension of this methodology to flows in heterogeneous\textbackslash fractured\textbackslash porous media, we incorporate jumps in the diffusion coefficient. These discontinuities then represent transitions in the media.
		More precisely, we consider a second order elliptic problem where the random coefficient is given by the sum of a (continuous) Gaussian random field and a (discontinuous) jump part. 
		To estimate moments of the solution to the resulting random partial differential equation, we use a pathwise numerical approximation combined with multilevel Monte Carlo sampling. 
		In order to account for the discontinuities and improve the convergence of the pathwise approximation, the spatial domain is decomposed with respect to the jump positions in each sample, leading to path-dependent grids. 
		Hence, it is not possible to create a sequence of grids which is suitable for each sample path a-priori. 
		We address this issue by an adaptive multilevel algorithm, where the discretization on each level is sample-dependent and fulfills given refinement conditions.  
	\end{abstract}
	
	\begin{keywords}
		Adaptive multilevel Monte Carlo method, flow in heterogeneous media, fractured media, porous media, jump-diffusion coefficient, non-continuous random fields, elliptic equation
	\end{keywords}

\begin{AMS}
  60H25, 60H30, 60H35, 35R60, 65C05 , 58J65
\end{AMS}

\section{Introduction}\label{sec:Intro}
Uncertainty quantification plays an increasingly important role in a wide range of problems in the Engineering Sciences and Physics. Examples of sources of uncertainty are imprecise or insufficient measurements and noisy data. In the underlying dynamical system this is modeled via a stochastic operator, stochastic boundary conditions and/or stochastic data. As an example, to model subsurface flow more realistically the coefficient of an (essentially) elliptic equation is assumed to be stochastic. A common approach in the literature is to use (spatially) correlated random fields that are built from uniform distributions or colored log-normal fields. The resulting marginal distributions of the field are (shifted) normally, resp. log-normally distributed. Neither choice is universal enough to accommodate all possible types of permeability, especially not if fractures are incorporated (see~\cite{ZK04}), the medium is very heterogeneous or porous. 

The last decade has been an active research period on elliptic equations with random data. A non-exhaustive list of publications in this field includes~\cite{ABS13, BNT07, BTZ04, BSZ11, CGST11, CDS11, FST05, LWZ16, NTW08a, SG11, G13}. One can find various ways to approximate the distribution or moments of the solution to the elliptic equation. Next to classical Monte Carlo methods, their multilevel variants and other variance reduction techniques have been applied. The concept of \MLMC\, simulation has been developed in~\cite{H01} to calculate parametric integrals and has been rediscovered in~\cite{G08} to estimate the expected value of functionals of stochastic differential equations. 
Ever since, \MLMC\,techniques have been successfully applied to various problems, for instance in the context of elliptic random PDEs in \cite{ABS13, BSZ11, CGST11, LWZ16, G13} to just name a few. These sampling algorithms are fundamentally different from approaches using Polynomial Chaos. The latter suffer from the fact that one is rather restricted when it comes to possibilities to model the stochastic coefficient. While in the case of fields built from uniform distributions or colored log-normal fields these algorithms can outperform sampling strategies, approaches -- like stochastic Galerkin methods -- are less promising in our discontinuous setting due to the rather involved structure of the coefficient. In fact, it is even an open problem to define them in the case that a L\'evy-field is used. 

Our main objective in this paper is to show existence and uniqueness of the solution to the elliptic equation when the coefficient is modeled as a jump-diffusion. By that we mean a field which consists of a deterministic, a Gaussian and non-continuous part. As we show in the numerical examples, this jump-diffusion coefficient can be used to model a wide array of scenarios. This generalizes the work in~\cite{LWZ16} and uses partly~\cite{HSST12}. To approximate the expectation of the solution we develop and test, further, variants of the multilevel Monte Carlo method which are tailored to our problem: namely adaptive and bootstrapping~\MLMC\ methods. Adaptivity is needed in the jump-diffusion setting, since the coefficient is not continuous. Our analysis shows that the non-adaptivity in a multilevel setting with non-continuous coefficients entails a larger error than when an adaptive algorithm is used. This result is not surprising, since the essence of the multilevel algorithm is that many samples are calculated on coarse grids, where the distributional error from misjudging jump-locations is high. Adaptivity, however, comes to the price that in certain scenarios solving the underlying system of equation becomes computationally expensive. In these settings the advantageous time-to-error performance of adaptive methods may be worse. Bootstrapping, as a simplified version of \textit{Multifidelity Monte Carlo} sampling (see~\cite{GPW16}), reuses samples across levels and is preferred when sampling from a certain distribution is computationally expensive. The bootstrapping algorithms outperform, in general, algorithms with a standard sampling strategy of multilevel Monte Carlo, as it actually reduces the mean square error.

In Section~\ref{sec:setting} we introduce the model problem, define  \textit{pathwise weak solutions} of random partial differential equations (PDEs) and show almost sure existence and uniqueness under relatively weak assumptions on the model parameters. 
The main contribution of this section is the existence and uniqueness result in Theorem~\ref{thm:pw_u}, which is then readily transferred to the special case of a jump-diffusion problem in the subsequent sections of this article. In Section~\ref{sec:a_conv} we define the jump-diffusion coefficient and construct suitable approximations. Both stochastic parts of the jump-diffusion coefficient are approximated: The Gaussian one by a standard truncation of its basis representation, and the jump part (if direct sampling from the jump distribution is not possible) by a technique based on Fourier Inversion. We show $L^p$-type convergence for all existing moments of the approximation. From this result convergence of the approximated solution follows immediately. The approximated solution has still to be discretized to actually estimate moments of it (Section~\ref{sec:fem}). The Galerkin-type discretization is directly furthered into an adaptive scheme. Section~\ref{sec:mlmc} then introduces the sampling methods that are used, namely Monte Carlo, multilevel Monte Carlo and a bootstrapping variant of it. An extensive discussion of numerical examples in one and two dimensions, in Section~\ref{sec:num}, concludes the paper.

\section{Elliptic boundary value problems and existence of solutions}\label{sec:setting}
We consider the following random elliptic equation in a general setting before we specify in Section~\ref{sec:a_conv} our precise choice of coefficient function.
Let $(\gO,\cA,\bP)$ be a complete probability space and $\cD\subset\R^d$, for some  $d\in\N$, be a bounded and connected Lipschitz domain. 
In this paper we consider the linear, random elliptic problem
\begin{equation}\label{eq:elliptic}
\begin{split}
-\grad\cdot \left(a(\go,x)\grad u(\go,x)\right)&=f(\go,x)\quad\text{in $\gO\times\cD$},\\
\end{split}
\end{equation}
where
$a:\gO\times\cD\to\R$ is a stochastic jump-diffusion coefficient and
$f:\gO\times\cD\to\R$ is a random source function. 
The Lipschitz boundary $\partial \cD$ consists of open $(d-1)$-dimensional manifolds which are grouped into two disjoint subsets $\gG_1$ and $\gG_2$ such that 
$\gG_1\neq\emptyset$ and $\partial \cD=\gG_1\cup\gG_2$.
We impose mixed Dirichlet-Neumann boundary conditions 
\begin{align}\label{eq:boundary}
\begin{split}
u(\go,x)&=0\quad\text{on $\gO\times\gG_1$},\\
a(\go,x)\vv n\cdot\grad u(\go,x)&=g(\go,x)\quad\text{on $\gO\times\gG_2$},
\end{split}
\end{align}
on Eq.~\eqref{eq:elliptic}, where $\vv n$ is the outward unit normal vector to $\gG_2$ and $g:\gO\times\gG_2\to\R$,
assuming that the exterior normal derivative $\vv n\cdot\grad u$ on $\gG_2$ is well-defined for any $u\in C^1(\ol\cD)$.
To obtain a pathwise variational formulation of this problem, we use the standard Sobolev space $H^1(\cD)$ equipped with the norm 
\begin{equation*}
||v||_{H^1(\cD)}=\left(\int_\cD|v|^2+||\grad v||_2^2dx\right)^{1/2}\quad\text{for $v\in H^1(\cD)$},
\end{equation*}
where $||\cdot||_2$ denotes the Euclidean norm on $\R^d$.
On the Lipschitz domain $\D$, the existence of a bounded, linear operator  
$$T:H^1(\cD)\to H^{1/2}(\partial\cD)$$
with 
\begin{equation*}
T:H^1(\cD)\cap C^\infty(\overline\cD)\to H^{1/2}(\partial\cD),\quad v\mapsto v\vert_{\partial\cD}
\end{equation*}
and
\begin{equation}\label{eq:tracebound}
||Tv||_{H^{1/2}(\partial\cD)}\le C_\cD ||v||_{H^1(\D)} 
\end{equation}
for $v\in H^1(\cD)$ and some constant $C_\cD>0$, dependent only on $\D$, is ensured by the trace theorem, see for example~\cite{D96}. 
At this point, one might argue that the trace operator $T$ needs to be defined pathwise for any $\go\in\gO$, 
since the Neumann part of the boundary conditions in Eq.~\eqref{eq:boundary} may contain a random function $g:\gO\times\gG_2\to\R$. 
This is true if one works with $T$ on $\gG_2\subset\partial\D$, as the trace $Tv$ then has to match the boundary condition given by $g(\go,\cdot$) on $\gG_2$ for $\bP$-almost all $\go\in\gO$ and $v\in H^1(\D)$. 
In our case, for simplicity, we may treat $T$ independently of $\go\in\gO$, since we only consider the trace operator on the homogeneous boundary part $\gG_1$ to define $V$ as follows:
The subspace of $H^1(\cD)$ with zero trace on $\gG_1$ is then 
\begin{equation*}
V:=\{v\in H^1(\cD)|\;Tv\vert_{\gG_1}=0\}, 
\end{equation*}
with norm
\begin{equation*}
||v||_V:=\left(\int_\cD|v|^2+||\grad v(x)||_2^2dx\right)^{1/2}.
\end{equation*}

\begin{rem}
	The condition $\gG_1\neq\emptyset$ implies that $V$ is a closed linear subspace of $H^1(\D)$.
	We may as well work with non-homogeneous boundary conditions on the Dirichlet part, i.e. $u(\go,x)=g_1(\go,x)$ for $g_1:\gO\times\gG_1\to\R$.
	The corresponding trace operator $T$ is still well defined if, for $\bP$-a.e. $\omega\in\Omega$, $g_1(\omega,\cdot)$ can be extended to a function $\widetilde g_1(\omega,\cdot)\in H^1(\D)$.
	Then, we consider for $\bP$-a.e. $\omega\in\Omega$ the problem
	\begin{align*}
	-\grad\cdot \left(a(\go,x)\grad ((u-\widetilde g_1)(\go,x))\right)&=f+\grad\cdot(a(\go,x)\grad\widetilde g_1(\go,x))\quad\text{on $\gO\times\cD$},\\
	(u-\widetilde g_1)(\go,x)&=0 \quad\text{on $\gG_1\times\cD$ and}\\
	a(\go,x)\vv n\cdot\grad((u-\widetilde g_1)(\go,x))&=g(\go,x)-a(\go,x)\vv n\cdot\grad\widetilde g_1(\go,x)\quad\text{on $\gG_2\times\cD$.}
	\end{align*}
	But this is in fact a version of Problem~\eqref{eq:elliptic} equipped with Eqs.~\eqref{eq:boundary} where the source term and Neumann-data have been changed (see also~\cite[p. 317]{E10}).
\end{rem}
As the coefficient and the boundary conditions are given by random functions, the solution $u$ is also a random function. Besides pathwise properties, $u$ may also have certain integrability properties with respect to the underlying probability measure.
To this end, we introduce the space of \textit{Bochner integrable} random variables resp. random functions (see~\cite{DZ14} for an overview).
\begin{defi}
	Let $(B,||\cdot||_B)$ be a Banach space and define the norm $||\cdot||_{L^p(\gO;B)}$ for a $B$-valued random variable $v:\gO\to B$ as
	\begin{equation*}
	||v||_{L^p(\gO, B)}:=\begin{cases}
	\E(||v||_B^p)^{1/p}\quad\text{for $1\le p<+\infty$}\\
	\text{esssup}_{\go\in\gO} ||v||_B\quad\text{for $p=+\infty$}
	\end{cases}.
	\end{equation*}
	The corresponding space of Bochner-integrable random variables is then given by
	\bee
	L^p(\gO;B):=\{w:\gO\to B\text{ is strongly measurable and }||w||_{L^p(\gO;B)}<+\infty\}.
	\eee
\end{defi}

The following set of assumptions on $a,f$ and $g$ allows us to show existence and uniqueness of the solution to Eq.~\eqref{eq:elliptic}. Consequently, we denote by $\cV'$ the topological dual of a vector space $\cV$.

\begin{assumption}\label{ass:unif_ell}
	Let $H:=L^2(\D)$. For $\bP$-almost all $\go\in\gO$ it holds that:
	\begin{itemize}
		\item $a_-(\go):=\inf_{x\in\cD} a(\go,x)>0$ and $a_+(\go):=\sup_{x\in\cD} a(\go,x)<\infty$.
		\item $1/a_-\in L^p(\gO;\R)$, $f\in L^q(\gO;H)$ and $g\in L^q(\gO;L^2(\gG_2))$ for some $p,q\in[1,\infty]$ such that $r:=(1/p+1/q)^{-1}\ge1$.
	\end{itemize}
\end{assumption}
\begin{rem}
	In Assumption~\ref{ass:unif_ell}, we did not establish a uniform elliptic bound on $a$ in $\gO$ (see for example \cite{LWZ16}), neither did we assume a certain spatial regularity as in \cite{CST13,G13}.
	The relatively weak assumptions are natural in our context, since in Section~\ref{sec:a_conv} we model $a$ as a jump-diffusion coefficient and uniform bounds or assumptions on H\"older-continuity are too restrictive.
	For the investigation of problem~\eqref{eq:elliptic} with piecewise H\"older-continuous coefficients we refer to \cite{G13}.
	We may identify $H$ with its dual and work on the Gelfand triplet $V\subset H\simeq H'\subset V'$.
	Hence, Assumption~\ref{ass:unif_ell} guarantees that $f(\go,\cdot)\in V'$, and, similarly, $g(\go,\cdot)\in H^{-1/2}(\gG_2)$ for $\bP$-almost all $\go\in\gO$.
\end{rem}

For fixed $\go\in\gO$, multiplying the random PDE~\eqref{eq:elliptic} with a test function $v\in V$ and integrating by parts yields the integral equation
\begin{equation}\label{eq:int_eq}
\int_\cD a(\go,x)\grad u(\go,x)\cdot\grad v(x)dx=\int_\cD f(\go,x)v(x)dx+\int_{\gG_2}g(\go,x)[Tv](x)dx.
\end{equation}
Consider the bilinear form $B_{a(\go)}$ 
\begin{equation*}
B_{a(\go)}:V\times V\to\R, \quad(u,v)\mapsto \int_\cD a(\go,x)\grad u(x)\cdot\grad v(x)dx
\end{equation*}
and 
\begin{equation*}
F_\go:V\to\R,\quad v\mapsto\int_\cD f(\go,x)v(x)dx+\int_{\gG_2}g(\go,x)[Tv](x)dx,
\end{equation*}
where the integrals in $F_\go$ are understood as the duality pairings 
\begin{align*}
\int_\cD f(\go,x)v(x)dx&=\dualpair{V'}{V}{f(\go,\cdot)}{v}\quad\text{and}\\
\int_{\gG_2} g(\go,x)[Tv](x)dx&=\dualpair{H^{-1/2}(\gG_2)}{H^{1/2}(\gG_2)}{g(\go,\cdot)}{Tv}.
\end{align*}
Equation~\eqref{eq:int_eq} then leads to the pathwise variational formulation of Problem~\eqref{eq:elliptic}:\\ 
For $\bP$-almost all $\go\in\gO$, given $f(\go,\cdot)\in V'$ and $g(\go,\cdot)\in H^{-1/2}(\gG_2)$, find $u(\go,\cdot)\in V$ such that 
\begin{equation}\label{eq:var}
B_{a(\go)}(u(\go,\cdot),v)=F_\go(v)
\end{equation}
for all $v\in V$. 
A function $u(\go,\cdot)\in V$ that fulfills the pathwise variational formulation is then called \textit{pathwise weak solution} to Problem~\eqref{eq:elliptic}.
\begin{thm}\label{thm:pw_u}
	If Assumption~\ref{ass:unif_ell} holds, then there exists a unique pathwise weak solution $u(\go,\cdot)\in V$ to Problem~\eqref{eq:var} for $\bP$-almost all $\go\in\gO$.
	Furthermore, $u\in L^r(\gO;V)$ and
	\begin{equation*}
	||u||_{L^r(\gO;V)}\le C(a_-,\D,p)(||f||_{L^q(\gO;H)}+||g||_{L^q(\gO;L^2(\gG_2))}),
	\end{equation*}
	where $C(a_-,\D,p)>0$ is a constant depending only on the indicated parameters.
\end{thm}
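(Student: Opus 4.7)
The plan is to apply Lax--Milgram pathwise for $\bP$-a.e. $\omega\in\Omega$ to Eq.~\eqref{eq:var}, derive the pathwise norm bound from the coercivity/continuity constants, and then integrate in $\omega$ using H\"older's inequality with the split $1/r=1/p+1/q$ to obtain the Bochner estimate.

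First I would fix an $\omega$ in the full-measure set on which Assumption~\ref{ass:unif_ell} holds. On this set, $0<a_-(\omega)\le a(\omega,x)\le a_+(\omega)<\infty$ a.e.\ in $\cD$, so the bilinear form is continuous: $|B_{a(\omega)}(u,v)|\le a_+(\omega)\,\|\nabla u\|_{L^2}\|\nabla v\|_{L^2}\le a_+(\omega)\|u\|_V\|v\|_V$. Because $\Gamma_1\neq\emptyset$, the Poincar\'e--Friedrichs inequality holds on $V$: there is $C_P=C_P(\cD,\Gamma_1)>0$ with $\|v\|_H\le C_P\|\nabla v\|_{L^2}$ for all $v\in V$, hence
\begin{equation*}
B_{a(\omega)}(v,v)\ge a_-(\omega)\,\|\nabla v\|_{L^2}^2\ge \frac{a_-(\omega)}{1+C_P^2}\,\|v\|_V^2,
\end{equation*}
so $B_{a(\omega)}$ is $V$-coercive with constant $a_-(\omega)/(1+C_P^2)$. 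For the linear form, by Cauchy--Schwarz and Eq.~\eqref{eq:tracebound},
\begin{equation*}
|F_\omega(v)|\le \|f(\omega,\cdot)\|_H\|v\|_H + \|g(\omega,\cdot)\|_{L^2(\Gamma_2)}\|Tv\|_{L^2(\Gamma_2)}\le \bigl(\|f(\omega,\cdot)\|_H + C_\cD\|g(\omega,\cdot)\|_{L^2(\Gamma_2)}\bigr)\|v\|_V,
\end{equation*}
so $F_\omega\in V'$. Lax--Milgram therefore provides a unique $u(\omega,\cdot)\in V$ solving Eq.~\eqref{eq:var} with
\begin{equation*}
\|u(\omega,\cdot)\|_V\le \frac{1+C_P^2}{a_-(\omega)}\bigl(\|f(\omega,\cdot)\|_H + C_\cD\|g(\omega,\cdot)\|_{L^2(\Gamma_2)}\bigr).
\end{equation*}

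Next I would pass from the pathwise bound to the Bochner estimate. Since $1/r=1/p+1/q$, H\"older's inequality on $(\Omega,\cA,\bP)$ applied with exponents $p/r$ and $q/r$ yields, for any nonnegative scalar random variables $X,Y$, $\E((XY)^r)^{1/r}\le \|X\|_{L^p(\Omega)}\|Y\|_{L^q(\Omega)}$. Applying this to $X=1/a_-$ and $Y=\|f\|_H+C_\cD\|g\|_{L^2(\Gamma_2)}$ and using the triangle inequality in $L^q(\Omega;\R)$ gives
\begin{equation*}
\|u\|_{L^r(\Omega;V)}\le (1+C_P^2)\,\bigl\|1/a_-\bigr\|_{L^p(\Omega)}\bigl(\|f\|_{L^q(\Omega;H)} + C_\cD\|g\|_{L^q(\Omega;L^2(\Gamma_2))}\bigr),
\end{equation*}
which is the claimed inequality with $C(a_-,\cD,p)=(1+C_P^2)\max(1,C_\cD)\|1/a_-\|_{L^p(\Omega)}$.

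The point that needs a bit of care is the strong measurability of $\omega\mapsto u(\omega,\cdot)$ as a $V$-valued map, which is required to legitimately speak of $u\in L^r(\Omega;V)$. I would handle this by a Galerkin argument: pick a countable dense sequence $(v_k)_{k\in\N}$ in $V$, set $V_n=\mathrm{span}(v_1,\dots,v_n)$, and define $u_n(\omega,\cdot)\in V_n$ as the solution of the finite-dimensional system $B_{a(\omega)}(u_n,v_j)=F_\omega(v_j)$, $j=1,\dots,n$. The coefficient vector of $u_n$ is obtained by inverting a matrix whose entries are measurable functions of $\omega$ (integrals of $a(\omega,\cdot)$, $f(\omega,\cdot)$, $g(\omega,\cdot)$ against deterministic test functions), and the matrix is a.s.\ invertible by the same coercivity argument as above; hence $\omega\mapsto u_n(\omega,\cdot)$ is strongly measurable into $V$. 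The uniform-in-$n$ coercivity bound shows $u_n(\omega,\cdot)\to u(\omega,\cdot)$ in $V$ for a.e.\ $\omega$ by the standard C\'ea-type argument on dense sequences, and a pointwise limit of strongly measurable $V$-valued maps is strongly measurable. This, together with the pathwise norm bound, is the main technical obstacle; everything else is a direct application of Lax--Milgram and H\"older's inequality.
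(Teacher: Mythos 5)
Your proof is correct and follows essentially the same route as the paper's: pathwise Lax--Milgram via the coercivity constant $a_-(\omega)$ times a Poincar\'e factor and the trace bound for $F_\omega$, followed by H\"older/Minkowski in $\omega$ with the split $1/r=1/p+1/q$. The one genuine addition is your Galerkin argument for the strong measurability of $\omega\mapsto u(\omega,\cdot)$, a point the paper's proof passes over silently but which is indeed needed to speak of $u\in L^r(\Omega;V)$.
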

\begin{proof}
	Choose $\go\in\gO$ such that Assumption~\ref{ass:unif_ell} is fulfilled. For all $u,v\in V$, we obtain by the Cauchy-Schwarz inequality 
	\bee
	|B_{a(\go)}(u,v)|\le\left(\int_\D (a(\go,x))^2||\grad u(x)||_2^2dx\int_\D ||\grad v(x)||_2^2dx\right)^{1/2}
	\le a_+(\go)||u||_V||v||_V.
	\eee
	On the other hand,
	\begin{align*}
	B_{a(\go)}(u,u)&\ge a_-(\go)\int_\D ||\grad u(x)||_2^2dx\\
	&=\frac{a_-(\go)}{2}(||\grad u||_{L^2(\D)}^2+||\grad u||_{L^2(\D)}^2)\\
	&\ge\frac{a_-(\go)}{2}(||\grad u||_{L^2(\D)}^2+C_{|\D|}^{-2}||u||_{L^2(\D)}^2)\\
	&\ge\frac{a_-(\go)}{2}\min(1,C_{|\D|}^{-2})||u||_V^2,
	\end{align*}
	where the constant $C_{|\D|}^2>0$ stems from the constant in Poincar\'e's inequality, $C_{|\D|}$, and only depends on $|\D|$. 
	Hence the bilinear form $B_{a(\go)}:V\times V\to\R$ is continuous and coercive.
	We use that $H\simeq H'\subset V'$ and the trace theorem (Equation~\eqref{eq:tracebound}) to bound $F_\go$ by 
	\begin{align*}
	F_\go(v)&\le ||f(\go,\cdot)||_{V'}||v||_V+||g(\go,\cdot)||_{H^{-1/2}(\gG_2)}||Tv||_{H^{1/2}(\gG_2)}\\
	&\le (||f(\go,\cdot)||_H+C_\D||g(\go,\cdot)||_{L^2(\gG_2)})||v||_V
	\end{align*}
	This shows that $F_\go$ is a bounded linear functional on $V$ (and therefore continuous) for almost all $\go\in\gO$.
	The existence of a unique pathwise weak solution $u(\go,\cdot)$ is then guaranteed by the Lax-Milgram lemma $\bP$-almost surely. 
	If $u(\go,\cdot)$ is a solution of Eq.~\eqref{eq:var} for given $f(\go,\cdot)\in H$ and $g(\go,\cdot)\in L^2(\gG_2)$, then
	\begin{align*}
	\frac{a_-(\go)}{2}\min(1,C_{|\D|}^{-2})||u(\go,\cdot)||_V^2&\le B_{a(\go)}(u(\go,\cdot),u(\go,\cdot))\\
	&=F_\go(u(\go,\cdot))\le (||f(\go,\cdot)||_H+C_\D||g(\go,\cdot)||_{L^2(\gG_2)})||u(\go,\cdot)||_V
	\end{align*}
	Using H\"older's and Minkowski's inequality together with $r=(1/p+1/q)^{-1}\ge1$ yields:
	\begin{align*}
	||u||_{L^r(\gO;V)}&\le\frac{2\max(1,C_\D)}{\min(1,C_{|\D|}^{-2})}\E\left(a_-^{-p}\right)^{1/p}\E\left((||f||_H+||g||_{L^2(\gG_2)})^q\right)^{1/q}\\
	&\le \underbrace{\frac{2\max(1,C_\D)}{\min(1,C_{|\D|}^{-2})} ||1/a_-||_{L^p(\gO;\R)}}_{:=C(a_-,\D,p)}(||f||_{L^q(\gO;H)}+||g||_{L^q(\gO;L^2(\gG_2)})<+\infty.
	\end{align*}
\end{proof}

In the next section, we introduce the diffusion coefficient $a$, which allows us to incorporate discontinuities at random points or areas in $\D$. 
We show the existence and uniqueness of a weak solution to the discontinuous diffusion problem by choosing $a$ such that Assumption~\ref{ass:unif_ell} is fulfilled and Theorem~\ref{thm:pw_u} may be applied. 

\section{Discontinuous random elliptic problems}\label{sec:a_conv}
The stochastic coefficient $a$ in a jump-diffusion model should incorporate random discontinuities as well as a Gaussian component.  
We achieve this characteristic form of $a$ by defining the coefficient as a Gaussian random field with additive discontinuities on random areas of $\D$.  
Since this usually involves infinite series expansions in the Gaussian component or sampling errors in the jump measure, 
we also describe how to obtain tractable approximations of $a$.
Subsequently, existence and uniqueness results for weak solutions of the unapproximated resp. approximated jump-diffusion problem based on the results in Section~\ref{sec:setting} are proved.
We conclude this section by showing pathwise and $L^p$-convergence of the approximated solution to the solution $u:\gO\to V$ to the (unapproximated) discontinuous diffusion problem.

\subsection{Jump-diffusion coefficients and their approximations}
\begin{defi}\label{def:a}
	The \textit{jump-diffusion coefficient} $a$ is defined as
	\begin{equation}\label{eq:a}
	a:\gO\times\D\to\R_{>0},\quad (\go,x)\mapsto\ol a(x)+\Phi(W(\go,x))+P(\go,x),
	\end{equation}
	where
	\begin{itemize}
		\item $\ol a\in C^1(\D;\R_{\ge0})$ is non-negative, continuous and bounded.
		\item $\Phi\in C^1(\D;\R_{>0})$ is a continuously differentiable, positive mapping.
		\item $W\in L^2(\gO;H)$ is a (zero-mean) Gaussian random field associated to a non-negative, symmetric trace class operator $Q:H\to H$. 
		\item $\gl$ is a finite measure on $(\D,\cB(\D))$ and $\cT:\gO\to\cB(\D),\;\go\mapsto\{\cT_1,\dots,\cT_{\tau}\}$ is a random partition of $\D$ with respect to $\gl$.
		The number $\tau$ of elements in $\cT$ is a random variable $\tau:\gO\to\N$ on $(\gO,\cA,\bP)$ with $\E(\tau)=\gl(\D)$. 
		\item $(P_i, i\in\N)$ is a sequence of non-negative random variables on $(\gO,\cA,\bP)$ and
		$$P:\gO\times\D\to\R_{\ge0},\quad(\go,x)\mapsto\sum_{i=1}^{\tau(\go)}\1_{\{\cT_i\}}(x)P_i(\go).$$
		The sequence $(P_i, i\in\N)$ is independent of $\tau$ (but not necessarily i.i.d.).
	\end{itemize}
\end{defi}

\begin{rem}
	The definition of the measure $\gl$ on $(\D,\cB(\D))$ in Def.~\ref{def:a} relates not only to the average number of partition elements $\E(\tau)$, but may further be utilized to concentrate discontinuities of the jump-diffusion coefficient $a$ to certain areas of $\D$. 
	Choosing, for instance, $\gl$ as the Lebesgue measure on $\D$ corresponds to uniformly distributed jumps and on average equally sized partition elements $\cT_i$.
	In contrast, if $\gl$ is a Gaussian measure on $\D$ around some center point $x_C\in\D$, the number of discontinuities (resp. size of partition elements) will decrease (resp. increase) as one moves away from $x_C$. We refer to the numerical experiments in Section~\ref{sec:num}, where we give interpretations of $\gl$ to model certain characteristics of different jump-diffusion coefficients. On a further note, we do not require stochastic independence of $W$ and $P$.
\end{rem}
In general, the structure of $a$ as in Def.~\ref{def:a} does not allow us to draw samples from the exact distribution of this random function.
For an approximation of the Gaussian field, one usually uses truncated \KL expansions:
Let $((\eta_i,e_i), i\in\N)$ denote the sequence of eigenpairs of $Q$, where the eigenvalues are given in decaying order $\eta_1\ge\eta_2\ge\dots\ge0$.
Since $Q$ is trace class the Gaussian random field $W$ admits the representation
\bee
W=\sum_{i\in\N}\sqrt{\eta_i}e_iZ_i,
\eee
where $(Z_i,i\in\N)$ is a sequence of independent and standard normally distributed random variables.
The series above converges in $L^2(\gO;H)$ and $\bP$-almost surely (see i.e.~\cite{A15}).
The truncated \KL expansion $W_N$ of $W$ is then given by 
\bee
W_N:=\sum_{i=1}^N\sqrt{\eta_i}e_iZ_i,
\eee
where we call $N\in\N$ the \textit{cut-off index} of $W_N$.
In addition, it may be possible that the sequence of jumps $(P_i, i\in\N)$ cannot be sampled exactly but only with an intrinsic bias (see also Remark~\ref{rem:P}).
The biased samples are denoted by $(\widetilde P_i, i\in\N)$ and the error which is induced by this approximation is represented by the parameter $\eps>0$ as in Assumption~\ref{ass:EV}.
To approximate $P$ using the biased sequence $(\widetilde P_i, i\in\N)$ instead of $(P_i, i\in\N)$ we define the jump part approximation
\bee
P_\eps:\gO\times\D\to\R,\quad(\go,x)\mapsto\sum_{i=1}^{\tau(\go)}\1_{\{\cT_i\}}(x)\widetilde P_i(\go).
\eee
The \textit{approximated jump-diffusion coefficient} $a_{N,\eps}$ is then given by
\be \label{eq:a_approx}
a_{N,\eps}(\go,x):=\ol a(x)+\Phi(W_N(\go,x))+P_\eps(\go,x),
\ee
and the corresponding stochastic PDE with approximated jump-diffusion coefficient reads
\begin{align}\label{eq:u_N}
\begin{split}
-\grad \cdot\left(a_{N,\eps}(\go,x)\grad u_{N,\eps}(\go,x)\right)&=f(\go,x)\quad\text{in $\gO\times\cD$},\\
u_{N,\eps}(\go,x)&=0\quad\text{on $\gO\times\gG_1$},\\
a_{N,\eps}(\go,x)\vv n\cdot\grad u_{N,\eps}(\go,x)&=g(\go,x)\quad\text{on $\gO\times\gG_2$}.
\end{split}
\end{align}
For $\go\in\gO$ and given samples $a_{N,\eps}(\go,\cdot), f(\go,\cdot)$ and $g(\go,\cdot)$, 
we consider the pathwise weak solution $u_{N,\eps}(\go,\cdot)\in V$ to Problem~\eqref{eq:u_N} for fixed approximation parameters $N\in\N$ and $\eps>0$.
The variational formulation of Eq.~\eqref{eq:u_N} is then analogous to Eq.~\eqref{eq:var} given by: 
For almost all $\go\in\gO$ with given $f(\go,\cdot)$, $g(\go,\cdot)$, find $u_{N,\eps}(\go,\cdot)\in V$ such that
\be
\begin{split}
	\label{eq:var_N}
	B_{a_{N,\eps}(\go)}(u_{N,\eps}(\go,\cdot),v):&=\int_\D a_{N,\eps}(\go,x)\grad u_{N,\eps}(\go,x)\cdot\grad v(x)dx\\
	&=\int_\D f(\go,x)v(x)dx+\int_{\gG_2}g(\go,x)[Tv](x)dx=F_\go(v)
\end{split}
\ee
for all $v\in V$.
The following assumptions guarantee that we can apply Theorem~\ref{thm:pw_u} also in the jump-diffusion setting and that therefore pathwise solutions $u$ and $u_{N,\eps}$ exist.

\begin{assumption}\label{ass:EV}
	~
	\begin{enum_i}
		\item The eigenfunctions $e_i$ of $Q$ are continuously differentiable on $\D$ and there exist constants $\ga,\gb,C_e,C_\eta>0$ such that for any $i\in\N$
		\bee
		||e_i||_{L^\infty(\D)}\le1,\quad||\grad e_i||_{L^\infty(\D)}\le C_ei^\alpha\quad\text{and}\quad\sum_{i=1}^\infty\eta_ii^\gb\leq C_\eta<+\infty.
		\eee 
		
		\item Furthermore, the mapping $\Phi$ as in Definition~\ref{def:a} and its derivative are bounded by
		\bee
		\Phi(w)\ge\phi_1\exp(-\psi_1 w^2),\quad|\Phi'(w)|\le\phi_2\exp(\psi_2|w|),\quad w\in\R,
		\eee
		where $0<\psi_1<(2\,tr(Q))^{-1}$ with $tr(Q):=\sum_{i\in\N}\eta_i$  and $\phi_1,\phi_2,\psi_2>0$ are arbitrary constants.
		\item Given $\psi_1$ and $tr(Q)$, there exists a $q>(1-2\,tr(Q)\psi_1)^{-1}=:(1-\eta^*)^{-1}\ge1$ such that $f\in L^q(\gO;H)$ and $g\in L^q(\gO;\gG_2)$.
		\item Finally, for some $\widetilde{s}\in[1,\infty)$, $(P_i, i\in\N)$ consists of $s$-integrable random variables, i.e. $P_i\in L^s(\Omega;\R_{\ge0})$ for all $i\in \N$ and $s\in[1,\widetilde{s}]$, further there exists a sequence of approximations $(\widetilde P_i, i\in\N)$ so that the sampling error is bounded, for $\eps>0$, by
		\begin{equation*}
		\E(|\widetilde P_i-P_i|^s)\le\eps,\quad i\in\N,\;s\in[1,\widetilde{s}].
		\end{equation*}
		
	\end{enum_i}

\end{assumption}
\begin{rem}\label{rem:P}
	Assumption~\ref{ass:EV}~(i) on the eigenpairs of $((\eta_i,e_i), i\in\N)$ is natural. 
	For instance, the case that $W$ is a Brownian-motion-like random field or that $Q$ is a Mat\'ern covariance operator are included. 
	The bounds on $\Phi$ and the regularity assumptions on $f$ and $g$ (Assumption~\ref{ass:EV}~(ii),(iii)) are necessary to ensure that the solution $u$ has at least finite expectation. 
	The sampling error $\E(|\widetilde P_i-P_i|^s)$ in Assumption~\ref{ass:EV}~(iv) may be interpreted in several ways:
	For instance, it may account for uncertainties in the distribution of $P_i$, like parameters for which only confidence intervals are available.
	Another possibility is, that realizations of $P_i$ may not be simulated directly or only at relatively high computational costs, for example by Acceptance Rejection algorithms, see \cite[Chapter II]{AG07}. 
	In this case, it may be favorable to generate the approximation $\widetilde P_i$ by a more efficient numerical algorithm (i.e. Fourier inversion techniques, see~\cite{BS17}) instead and to control for the error.
	The resulting sampling error can then be equilibrated with the truncation error from the Gaussian field to achieve a desired overall accuracy.
\end{rem}

\subsection{Existence of $u$ and $u_{N,\eps}$}
We first show the existence of a weak solution for both, the jump-diffusion problem~\eqref{eq:elliptic} with $a$ as in Definition~\ref{def:a}, and the approximated problem \eqref{eq:u_N}. 
\begin{lem}\label{lem:a}
	Let $a$ be a jump-diffusion coefficient (as defined in Def.~\ref{def:a}). 
	If $a,f$ and $g$ fulfill Assumptions~\ref{ass:EV}, then the elliptic problem \eqref{eq:elliptic} has a unique weak solution $u\in L^r(\gO;V)$, where $r\in[1,(1/q+\eta^*)^{-1})$ for $\eta^*:=2\,tr(Q)\psi_1$.
\end{lem}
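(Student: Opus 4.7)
The plan is to reduce Lemma~\ref{lem:a} directly to Theorem~\ref{thm:pw_u} by verifying Assumption~\ref{ass:unif_ell} for the jump-diffusion coefficient $a$ of Definition~\ref{def:a} under Assumption~\ref{ass:EV}. Because $\ol a$, $\Phi(W)$ and $P$ are all pathwise non-negative, the crucial points are the pathwise positivity of $a_-(\go)$, the pathwise finiteness of $a_+(\go)$, and the $L^p$-integrability of $1/a_-$ for $p$ as large as the growth conditions on $W$ permit. From Assumption~\ref{ass:EV}(ii) one obtains the pointwise inequality $a(\go,x)\ge\Phi(W(\go,x))\ge\phi_1\exp(-\psi_1 W(\go,x)^2)$, so that $a_-(\go)\ge\phi_1\exp(-\psi_1\|W(\go,\cdot)\|_{L^\infty(\D)}^2)$. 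Under Assumption~\ref{ass:EV}(i) the \KL expansion of $W$ converges uniformly on $\ol\D$ (using $\|e_i\|_{L^\infty}\le 1$, the gradient bound on $e_i$, and the decay of $\eta_i$), so $W(\go,\cdot)\in C(\ol\D)$ with finite $L^\infty$-norm $\bP$-a.s., giving $a_-(\go)>0$ a.s. The upper bound $a_+(\go)<\infty$ a.s.\ follows similarly: $\ol a$ is bounded, $\Phi\circ W$ is continuous on the compact set $\ol\D$, and $P(\go,\cdot)\le\sum_{i=1}^{\tau(\go)}P_i(\go)$ is an a.s.\ finite sum since $\tau<\infty$ a.s.

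Taking $p$-th powers and expectations in the lower bound yields
\begin{equation*}
\|1/a_-\|_{L^p(\gO;\R)}^p\le\phi_1^{-p}\,\E\bigl[\exp\bigl(p\psi_1\|W\|_{L^\infty(\D)}^2\bigr)\bigr].
\end{equation*}
Since $W$ is a centered Gaussian random element of $C(\ol\D)$ with $\sup_{x\in\D}\E[W(x)^2]\le\mathrm{tr}(Q)$, Fernique's theorem together with a Borell--TIS concentration estimate provides exponential integrability of $\|W\|_{L^\infty(\D)}^2$ up to the critical rate $1/(2\,\mathrm{tr}(Q))$. Consequently $1/a_-\in L^p(\gO;\R)$ for every $p$ with $p\psi_1<1/(2\,\mathrm{tr}(Q))$, i.e.\ for every $p<1/\eta^*$.

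Finally, Assumption~\ref{ass:EV}(iii) supplies $q>(1-\eta^*)^{-1}$ with $f\in L^q(\gO;H)$ and $g\in L^q(\gO;L^2(\gG_2))$. For any admissible $p\in[(1-1/q)^{-1},1/\eta^*)$ the pair $(p,q)$ satisfies $1/p+1/q\le 1$, so $r:=(1/p+1/q)^{-1}\ge 1$ as required by Assumption~\ref{ass:unif_ell}, and as $p$ varies $r$ sweeps out exactly the half-open interval $[1,(1/q+\eta^*)^{-1})$ asserted in the lemma. Theorem~\ref{thm:pw_u} then delivers both the unique pathwise weak solution and the bound $u\in L^r(\gO;V)$. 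The main technical obstacle is the \emph{sharpness} of the Fernique/Borell--TIS threshold: merely invoking that a centered Gaussian in a separable Banach space has \emph{some} positive exponential-integrability radius would shrink the admissible range of $r$ strictly inside $[1,(1/q+\eta^*)^{-1})$, so one must track the concentration constant down to $1/(2\,\mathrm{tr}(Q))$ to recover the stated endpoint.
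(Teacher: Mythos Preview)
Your proposal is correct and follows essentially the same route as the paper: reduce to Theorem~\ref{thm:pw_u} by verifying Assumption~\ref{ass:unif_ell}, bound $1/a_-$ via $a_-(\go)\ge\phi_1\exp(-\psi_1\|W(\go,\cdot)\|_{L^\infty(\D)}^2)$, and then use the Borell--TIS tail estimate with $\overline\sigma^2\le\mathrm{tr}(Q)$ to obtain $\E[\exp(p\psi_1\|W\|_{L^\infty(\D)}^2)]<\infty$ precisely for $p<1/\eta^*$. The paper differs only in being more explicit at the technical level---it establishes pathwise continuity of $W$ via Kolmogorov--Chentsov (rather than asserting uniform convergence of the KL series), verifies that $W(x)-W(y)$ is Gaussian via characteristic functions, and writes out the Fubini/integration-by-parts computation that converts the Borell--TIS tail bound into the exponential-square integrability statement.
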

\begin{proof}
	By Theorem~\ref{thm:pw_u}, it is sufficient to show that $a,f$ and $g$ fulfill Assumption~\ref{ass:unif_ell}.
	Clearly, $0<a_-(\go)\le a_+(\go)<\infty$ for almost all $\go\in\gO$, as $\ol a$ and $P$ are non-negative and we have the lower bound on $\Phi$ in Assumption~\ref{ass:EV}~(ii) by definition.
	Consequently, it is sufficient to bound the expectation of $(\inf_{x\in\D} \Phi(W(x))^p$, for $1\leq p<(\eta^*)^{-1}$, from below (see also~\cite[Section 2.3]{C12}).
	
	The random variable $W(x)-W(y)$ follows a centered normal distribution for any $x,y\in\D$.
	To see this, consider the finite sum 
	\bee
	\widetilde W_M(x,y):=\sum_{i=1}^M\sqrt{\eta_i}(e_i(x)-e_i(y))Z_i
	\eee
	for $M\in\N$ and a sequence $(Z_i, i=1,\ldots,M)$ of i.i.d. $\cN(0,1)${-distributed} random variables. 
	Clearly, $\widetilde W_M(x,y)$ is normally distributed with zero mean and characteristic function
	\bee
	\phi_M(t):=\E(\exp(it\widetilde W_M(x,y))=\exp(-\frac{t}{2}\sum_{i=1}^M\eta_i(e_i(x)-e_i(y))^2),\quad t\in\R.
	\eee
	Using that $Q$ is trace class with $|e_i(z)|\le1$ for all $i\in\N$ and $z\in\cD$, it follows
	\bee
	\gs^2(x,y):=\sum_{i=1}^\infty\eta_i(e_i(x)-e_i(y))^2\le 2tr(Q)<+\infty
	\eee
	and hence 
	\bee
	\lim_{M\to\infty}\phi_M(t)=\exp(-\frac{t}{2}\sum_{i=1}^\infty\eta_i(e_i(x)-e_i(y))^2),\quad t\in\R,
	\eee
	where the right hand side is the characteristic function of a normal distribution with zero mean and variance $\gs^2(x,y)$.
	By the L\'evy continuity theorem this implies that 
	\bee
	W(x)-W(y)\sim\cN(0,\gs^2(x,y))
	\eee 
	for all $x,y\in\cD$.
	Next we show that $W$ has $\bP$-almost surely H\"older continuous paths (see also~\cite[Proposition 3.4]{C12}):
	Let $0<b\le\min(1,\frac{\gb}{2\ga})$ (where $\ga,\gb$ are defined in Ass.~\ref{ass:EV}~(i)) and $Z\sim\cN(0,1)$. For $x,y,\in\D\subset\R^d$ and any $k\in\N$ we have 
	\begin{align*}
	\E(|W(x)-W(y)|^{2k})&=\E(|\sqrt{\gs^2(x,y)}Z|^{2k})\\
	&=\frac{(2k)!}{2^{k}k!}\left(\sum_{i>N}\eta_i(e_i(x)-e_i(y))^{2(1-b)+2b}\right)^{k}\\
	&\le \frac{(2k)!}{2^{k}k!}2^{2(1-b)k}C_e^{2bk}\left(\sum_{i>N}\eta_ii^{2\ga b}\right)^{k}||x-y||_2^{2bk},
	\end{align*}
	where $0<b<\gb/(2\ga)$ and the constant $C_e$ stems from Ass.~\ref{ass:EV}~(i).
	The second equality results from the fact that $\E(Z^{2k})=(2k)!/(2^{k}k!)$ for all $k\in\N$
	and the sum in the inequality is finite because $2\ga b<\gb$.
	For any dimension $d\in\N$, we may choose $k>d/(2b)$ and obtain by the Kolmogorov-Chentsov theorem (\cite[Theorem 3.5]{DZ14}) that $W$ has a H\"older continuous modification with H\"older exponent $\gg\in(0,(2bk-d)/2k)$.
	Hence, $W$ is a centered Gaussian process and almost surely bounded on $\D$.
	By \cite[Theorem 2.1.1]{AT09} this implies 
	$
	E:=\E(\sup_{x\in\D} W(x))<\infty
	$
	and 
	\be\label{eq:tail}
	\bP(\sup_{x\in\D} W(x)-E\ge c)\le \exp(-\frac{c^2}{2\ol\sigma^2})
	\ee
	for all $c>0$ and $\overline\sigma^2:=\sup_{x\in\D}\E(W(x)^2)$.
	With Assumption~\ref{ass:EV}~(ii) and since $||\exp(|W|)||_{L^\infty(\D)}\le \exp(||W||_{L^\infty(\D)})$ we further obtain
	\begin{align*}
	\E(1/a_-^p)&\le\E((\inf_{x\in\D}\Phi(W(\cdot,x))^{-p})=\E(\sup_{x\in\D}\Phi(W(x))^{-p})\\
	&\le \frac{1}{\phi_1^p}\E(\sup_{x\in\D}\exp(p\psi_1|W(x)|^2))\\
	&\le \frac{1}{\phi_1^p}\E(\exp(p\psi_1||W||_{L^\infty(\D)}^2)).
	\end{align*}
	By Fubini's Theorem and integration by parts we may bound
	\begin{align*}
	\E(\exp(p\psi_1||W||_{L^\infty(\D)}^2))&=\int_0^\infty 2p\psi_1 c\exp(p\psi_1c^2)\bP(||W||_{L^\infty(\D)}>c)dc\\
	&\le 2p\psi_1 E\exp(p\psi_1E^2) + 2\int_E^\infty p\psi_1 c\exp(p\psi_1c^2))\bP(||W||_{L^\infty(\D)}>c)dc\\
	&\le 2p\psi_1 E\exp(p\psi_1E^2) + 4\int_E^\infty p\psi_1 c\exp((p\psi_1-\frac{1}{2\ol\gs^2})c^2))dc,
	\end{align*} 
	where we have used $\bP(||W(x)||_{L^\infty(\D)}>c)\le 2\bP(\sup_{x\in\D} W(x)>c)$, by the symmetry of $W$, and
	Ineq.~\eqref{eq:tail} in the last step. 
	The last expectation is finite if and only if $p\psi_1<\frac{1}{2\overline\sigma^2}$.
	For this to hold, $p<(\eta^*)^{-1}$ is sufficient, since $W(x)\sim N(0,\sum_{i=1}^\infty\eta_i e_i(x))$ and thus $\overline\sigma^2\le tr(Q)$.
\end{proof}

\begin{rem}\label{rem:regularity_u}
	{From Lemma~\ref{lem:a} follows immediately that one cannot expect finite second moments of the solution $u$ for $\eta^*\geq 1/2$ or $q\leq 2$. If we assume that $q=3$ we need $\eta^*<1/6$ to have finite second moments (in the case of, for instance, a log-normal Gaussian field). Note that, for all covariance kernels and functionals we use (e.g. log-Gaussian fields with Mat\'ern class covariance or Brownian-motion like covariance kernels), $\psi_1$ and $\eta^*$ are much smaller than $1/2$. If one assumes that $f$, $g$ and $a$ are stochastically independent, then the regularity of the solution (in $\Omega$) is at least the same as the lowest regularity of the data, i.e. $f$, $g$ or $a$.}
\end{rem}

\begin{lem}\label{lem:a_n}
	Let $a_{N,\eps}$ be the approximated jump-diffusion coefficient (as in Eq.~\eqref{eq:a_approx}) and define the random variables 
	\bee
	a_{N,\eps,-}:\gO\to\R,\quad\go\mapsto\inf_{x\in\D}a_{N,\eps}(\go,x)\quad\text{and}\quad a_{N,\eps,+}:\gO\to\R,\quad\go\mapsto\sup_{x\in\D}a_{N,\eps}(\go,x)
	\eee
	on $(\gO,\cA,\bP)$.
	If Assumption~\ref{ass:EV} holds, then, for any $N\in\N$ and $\eps>0$, there exists a unique weak solution $u_{N,\eps}\in L^r(\gO;V)$ to Problem~\eqref{eq:u_N}, where $r\in[1,(1/q+\eta^*)^{-1})$, for $\eta^*:=2\,tr(Q)\psi_1$.
	Furthermore, $||1/a_{N,\eps,-}||_{L^p(\gO,\R)}$ is bounded uniformly with respect to $\eps$ and $N$ for $p\in[1,(\eta^*)^{-1})$.
\end{lem}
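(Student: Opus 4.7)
The plan is to reduce this to Theorem~\ref{thm:pw_u} exactly as in the proof of Lemma~\ref{lem:a}, by verifying Assumption~\ref{ass:unif_ell} for the approximated triple $(a_{N,\eps},f,g)$ with constants that do not depend on $N$ or $\eps$. First, since $\ol a\ge0$ and $P_\eps\ge0$ pathwise, and $\Phi>0$ by Assumption~\ref{ass:EV}(ii), one has a.s. $0<a_{N,\eps,-}(\go)\le a_{N,\eps,+}(\go)<\infty$, and in particular
\[
a_{N,\eps,-}(\go)\ge \inf_{x\in\D}\Phi(W_N(\go,x)),\qquad \frac{1}{a_{N,\eps,-}(\go)^p}\le \phi_1^{-p}\exp\!\bigl(p\psi_1\|W_N(\go,\cdot)\|_{L^\infty(\D)}^2\bigr).
\]
Hence the whole task boils down to a uniform (in $N$) estimate of $\E\exp(p\psi_1\|W_N\|_{L^\infty(\D)}^2)$ for $p\in[1,(\eta^*)^{-1})$.

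Next I would replay the Gaussian argument from Lemma~\ref{lem:a} for the truncated field $W_N=\sum_{i=1}^N\sqrt{\eta_i}e_iZ_i$. Because $(Z_i)$ are i.i.d.\ standard normal and $(e_i)$ are $C^1$, $W_N$ is a centered Gaussian field with continuous (indeed $C^1$) sample paths, so Borell--TIS applies. The two quantities entering Borell--TIS are the pointwise variance
\[
\ol\gs_N^2:=\sup_{x\in\D}\E(W_N(x)^2)=\sup_{x\in\D}\sum_{i=1}^N\eta_i e_i(x)^2\le tr(Q),
\]
which is uniformly bounded by the same $\ol\gs^2$ that controls $W$, and the expected supremum $E_N:=\E(\sup_{x\in\D}W_N(x))$. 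For the latter I would write $W_N=W-R_N$ with $R_N:=\sum_{i>N}\sqrt{\eta_i}e_iZ_i$; by the symmetry of $R_N$,
\[
E_N\le\E(\sup_{x\in\D}W(x))+\E(\sup_{x\in\D}(-R_N(x)))=\E(\sup_{x\in\D}W(x))+\E(\sup_{x\in\D}R_N(x)),
\]
and the second term is bounded (in fact vanishing in $N$) by applying the same Kolmogorov--Chentsov/Borell argument of Lemma~\ref{lem:a} to $R_N$, whose eigenvalue tail $\sum_{i>N}\eta_i$ is dominated by $tr(Q)$. Thus $E_N\le E^*$ for a constant $E^*<\infty$ independent of $N$, and the tail bound~\eqref{eq:tail} holds with $E$ replaced by $E^*$ and $\ol\gs^2$ by $tr(Q)$, uniformly in $N$.

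With these uniform ingredients, the identical Fubini/integration-by-parts step of Lemma~\ref{lem:a} yields
\[
\E(\|1/a_{N,\eps,-}\|^p)\le \phi_1^{-p}\,\E\exp\!\bigl(p\psi_1\|W_N\|_{L^\infty(\D)}^2\bigr)\le C(p,\phi_1,\psi_1,tr(Q),E^*)<\infty
\]
for every $p\in[1,(\eta^*)^{-1})$, with the right-hand side independent of $N$ and $\eps$. Combined with the unchanged assumptions on $f$ and $g$ from Assumption~\ref{ass:EV}(iii), this verifies Assumption~\ref{ass:unif_ell} for $a_{N,\eps}$ with the same exponents $p,q,r$ as for $a$, and Theorem~\ref{thm:pw_u} supplies a unique $u_{N,\eps}(\go,\cdot)\in V$ pathwise together with the claimed integrability $u_{N,\eps}\in L^r(\gO;V)$ for $r\in[1,(1/q+\eta^*)^{-1})$.

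The main obstacle I anticipate is the uniform control of $E_N$: one must resist the temptation to pass to the limit $N\to\infty$, and instead produce a bound on $\E(\sup_x W_N)$ that holds for \emph{every} finite truncation. The decomposition $W_N=W-R_N$ together with reapplying the Kolmogorov--Chentsov modification and Borell--TIS argument to the Gaussian remainder $R_N$ (using the same eigenbasis summability in Assumption~\ref{ass:EV}(i)) is what makes this work without any extra assumption on $(\eta_i,e_i)$ beyond those already used for $W$.
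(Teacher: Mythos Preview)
Your argument is correct, but the paper takes a shorter path to the uniform bound. Rather than rerunning Borell--TIS for each $W_N$ and controlling $E_N=\E(\sup_{x}W_N(x))$ via the decomposition $W_N=W-R_N$, the paper simply observes
\[
\E\bigl(\exp(p\psi_1\|W_N\|_{L^\infty(\D)}^2)\bigr)\ \le\ \E\bigl(\exp(p\psi_1\|W\|_{L^\infty(\D)}^2)\bigr),
\]
the right-hand side being already finite for $p<(\eta^*)^{-1}$ by Lemma~\ref{lem:a}. The comparison holds because $W_N(x)=\E[W(x)\mid Z_1,\dots,Z_N]$, hence $\|W_N\|_{L^\infty(\D)}\le\E[\,\|W\|_{L^\infty(\D)}\mid Z_1,\dots,Z_N]$ a.s., and conditional Jensen applied to the convex map $t\mapsto\exp(p\psi_1 t^2)$ gives the inequality after taking expectations. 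This sidesteps in one line precisely the obstacle you flag at the end---the uniform control of $E_N$. Your route works as well (the constants in the Kolmogorov--Chentsov estimate for $R_N$ involve only $\sum_{i>N}\eta_i i^{2\alpha b}\le\sum_{i\in\N}\eta_i i^{2\alpha b}$, so they are indeed uniform in $N$), and it has the merit of being self-contained without recognizing $W_N$ as a conditional expectation; it is just more laborious than necessary.
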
 
\begin{proof}
	The proof is carried out identically to Lemma~\ref{lem:a}, where we replace $a_-$ by $a_{N,\eps,-}$, $a_+$ by $a_{N,\eps,+}$, $\gs(x,y)^2$ by $\sum_{i=1}^N\eta_i(e_i(x)-e_i(y))^2$ and $tr(Q)$ by $\sum_{i=1}^N\eta_i$.
	Again, by Eq.~\eqref{eq:a_approx}, $a_{N,\eps,+}<+\infty$ $\bP$-almost surely.
	In the case that $\eta_1=0$, the random field $W$ is degenerated and equal to zero.
	Hence, $a_{N,\eps}(\go,x)\ge\phi_1>0$ for all $(\go,x)\in\gO\times\D$ and the claim follows immediately.
	Otherwise, if $\eta_1>0$, we obtain in the fashion of Lemma~\ref{lem:a}
	\begin{align*}
	\E(1/a_{N,\eps-}^p)&\le \frac{1}{\phi_1^p}\E\big(\exp(p\psi_1||W_N||_{L^\infty(\D)}^2)\big)\\
	&\le \frac{1}{\phi_1^p}\E\big(\exp(p\psi_1||W||_{L^\infty(\D)}^2)\big)
	\end{align*}
	which is again finite if $p<(\eta^*)^{-1}$. 
	The proof is concluded by noting that the last estimate is independent of $N$ and $\eps$.
\end{proof}

\subsection{Convergence of the approximated diffusion coefficient}
The convergence of the approximated solution $u_{N,\eps}$ to $u$ depends on the convergence of the approximated jump-diffusion coefficient $a_{N,\eps}$ to $a$. 
We investigate this convergence by deriving separately convergence rates of the truncated \KL series and the approximation of the jump part.
\begin{thm}\label{thm:w_error}
	If Assumption~\ref{ass:EV} holds, then for any $p\ge1$ and $N\in\N$ we have that 
	\bee
	||W-W_N||_{L^p(\gO;L^\infty(\cD))}\le C_p\Xi_N^{1/2},
	\eee
	where $C_p>0$ is independent of $N$ and $\Xi_N:=\sum_{i>N}\eta_i<\infty$.
\end{thm}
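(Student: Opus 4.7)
The residual $V_N := W - W_N = \sum_{i>N}\sqrt{\eta_i}e_i Z_i$ is a centered Gaussian random field on $\cD$, and my strategy is to repeat the Kolmogorov-Chentsov computation carried out in the proof of Lemma~\ref{lem:a} but restricted to the tail of the basis expansion, and then to pass from pointwise Gaussian control to the $L^\infty(\cD)$-norm by means of Borell-TIS concentration.

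First, using $||e_i||_{L^\infty(\cD)}\le 1$, the pointwise variance is controlled by $\var(V_N(x))=\sum_{i>N}\eta_i e_i(x)^2\le\Xi_N$, so that $\E(|V_N(x)|^p)\le C_p\Xi_N^{p/2}$ for every $x\in\cD$. For the increment variance $\gs_N^2(x,y):=\sum_{i>N}\eta_i(e_i(x)-e_i(y))^2$, I would mirror the interpolation from Lemma~\ref{lem:a}: split the summand as $(e_i(x)-e_i(y))^{2(1-b)}(e_i(x)-e_i(y))^{2b}$, bound the first factor using $|e_i|\le 1$ and the second using $||\grad e_i||_{L^\infty(\cD)}\le C_e i^\ga$, and then apply H\"older's inequality on the resulting series $\sum_{i>N}\eta_i i^{2\ga b}$ with conjugate exponents $1/(1-b)$ and $1/b$. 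The inner series $\sum_i\eta_i i^{2\ga}$ is finite under Assumption~\ref{ass:EV}~(i) for any admissible $b\in(0,\gb/(2\ga)\wedge 1)$, yielding
\bee
\gs_N^2(x,y)\le C_b\,\Xi_N^{1-b}\,||x-y||_2^{2b}.
\eee

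Since $V_N(x)-V_N(y)$ is centered Gaussian, the Gaussian moment formula gives $\E(|V_N(x)-V_N(y)|^{2k})\le C_{k,b}\Xi_N^{(1-b)k}||x-y||_2^{2bk}$ for every $k\in\N$. Choosing $k$ with $2bk>d$, the Kolmogorov-Chentsov theorem (\cite[Theorem~3.5]{DZ14}) then furnishes a continuous modification of $V_N$ on $\ol\cD$ together with
\bee
\E(||V_N||_{L^\infty(\cD)}^{2k})\le C_{k,b,\cD}\,\Xi_N^{(1-b)k}.
\eee
To upgrade the exponent $(1-b)/2$ to the claimed $1/2$, I would invoke the Borell-TIS inequality: as a centered Gaussian vector in $L^\infty(\cD)$, $V_N$ satisfies
\bee
\bP(||V_N||_{L^\infty(\cD)}>M_N+t)\le 2\exp(-t^2/(2\Xi_N)),\quad t>0,
\eee
where $M_N := \E||V_N||_{L^\infty(\cD)}<+\infty$ and $\sup_{x\in\cD}\var(V_N(x))\le\Xi_N$. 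Integration in $t$ gives $||V_N-M_N||_{L^p(\gO;L^\infty(\cD))}\le C_p\Xi_N^{1/2}$, so once $M_N\le C\,\Xi_N^{1/2}$ is established --- either by taking $b$ small in the Kolmogorov-Chentsov estimate above with correspondingly large $k$, or by a direct generic-chaining argument in the intrinsic metric $\gs_N$ that exploits jointly the uniform variance bound $\bar\gs_N^2\le\Xi_N$ and the H\"older modulus above --- the claim follows by Minkowski.

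The main obstacle is this last control of the mean $M_N$ at the scale $\Xi_N^{1/2}$ uniformly in $N$: a naive Dudley entropy integral on $(\cD,\gs_N)$ produces a spurious factor $\sqrt{\log(1/\Xi_N)}$, and removing it is what forces the combined use of the pointwise variance of Step~1 together with the H\"older increment estimate of Step~2. Once the mean is of the right order, the sub-Gaussian tail from Borell-TIS (with the sharp variance $\Xi_N$) delivers all higher $L^p$-moments at the optimal rate.
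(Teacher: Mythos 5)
Your architecture (pointwise variance bound, interpolated increment bound, Kolmogorov--Chentsov for a.s.\ boundedness, then Gaussian concentration) is sound up to the point you yourself flag, but that flagged point is a genuine gap rather than a routine detail: nothing in the proposal actually establishes $M_N=\E\|W-W_N\|_{L^\infty(\D)}\le C\,\Xi_N^{1/2}$ with $C$ independent of $N$. Your first suggested route --- taking $b$ small in the Kolmogorov--Chentsov moment estimate --- only ever yields $M_N\le C_b\,\Xi_N^{(1-b)/2}$, which misses the claimed rate for every admissible $b>0$. Your second route is not carried out, and the tools it gestures at do not close the gap as stated: a Dudley entropy integral on $(\D,\gs_N)$ using the two inputs you have (uniform variance $\ol\gs_N^2\le\Xi_N$ and the H\"older modulus $\gs_N(x,y)\le C_b\Xi_N^{(1-b)/2}\|x-y\|_2^{b}$) produces exactly the spurious factor $\sqrt{\log(1/\Xi_N)}$ you identify, and generic chaining fed only these two inputs does not remove it. Since for a general centered Gaussian field $\E\sup$ is \emph{not} comparable to the maximal standard deviation, some additional quantitative ingredient is indispensable here, and the proposal does not supply one.

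The paper closes this gap by a mechanism that avoids estimating the mean altogether: the Fernique-type bound of Lifshits--Shi (\cite[Theorem 2.9]{LS01}, restated as Theorem~\ref{thm:tail}) controls the tail $\bP(\sup_{x\in\D}(W-W_N)(x)>c)$ for \emph{all} $c>0$ by $C\,c^{d/b}\exp(-c^2/(2\ol\gs_N^2))$, i.e.\ with the sharp variance $\ol\gs_N^2\le\Xi_N$ in the exponent and only a polynomial prefactor in $c$ coming from the metric entropy. Integrating this tail directly gives $\E(\|W-W_N\|^p_{L^\infty(\D)})\le C\,\ol\gs_N^{\,p+d/b}$, and the surplus factor $\ol\gs_N^{\,d/b}$ is absorbed into a constant depending only on $tr(Q)$. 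If you wish to keep the Borell--TIS architecture, you must either import such a theorem to bound $M_N$ (which then makes the Borell--TIS step redundant) or actually carry out a chaining argument at the level of \cite{LS01}; as written, the proof is incomplete at its decisive step.
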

In order to prove the bound for the truncation error, we need the following Fernique-type result. 
\begin{thm}\cite[Theorem 2.9]{LS01}\label{thm:tail}
	Let $(W(x),x\in \D)$ with $\D\subset\R^d$ be a centered Gaussian field. For $\epsilon, \delta>0$ let 
	\bee
	\varrho(\epsilon):=\sup_{x\in\D,\;||x-y||_2<\epsilon} \E((W(x)-W(y))^2)^{1/2}
	\quad\text{ and }\quad
	\Theta(\gd):=\int_0^\infty \varrho(\gd \exp(-y^2))dy.
	\eee
	Then for all $c>0$ 
	\bee
	\bP(\sup_{x\in\D} W(x)>c)\le C (\Theta^{-1}(1/c))^{-d}\exp(-\frac{c^2}{2\overline{\sigma}}),
	\eee
	where $\Theta^{-1}$ is the inverse function of $\Theta$, $C>0$ is an absolute constant and $\ol\sigma:=\sup_{x\in\D}\E(W(x)^2)^{1/2}$.
\end{thm}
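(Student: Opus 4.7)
}

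Write the Karhunen--Loève remainder as
\[
R_N(\omega,x):=W(\omega,x)-W_N(\omega,x)=\sum_{i>N}\sqrt{\eta_i}\,e_i(x)\,Z_i(\omega),
\]
so that $R_N$ is itself a centered Gaussian random field associated with the covariance $Q_N$ whose eigenpairs are $((\eta_i,e_i))_{i>N}$. The whole plan is to show $R_N$ is a.s.~bounded and H\"older continuous, then transport the Fernique--type tail bound of Theorem~\ref{thm:tail} (with $\bar\sigma^2$ replaced by a quantity $\le\Xi_N$) into $L^p(\Omega;L^\infty(\D))$ via Fubini. This mirrors the argument used inside the proof of Lemma~\ref{lem:a}, but the crucial new point is that every constant and every variance estimate that appears must be tracked with its dependence on $N$, and must turn out to be either independent of $N$ or bounded by $\Xi_N$.

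First I would record the two structural estimates that do \emph{not} depend on $N$. Pointwise, by $\|e_i\|_{L^\infty}\le1$,
\[
\bar\sigma_N^2:=\sup_{x\in\D}\E\bigl(R_N(x)^2\bigr)=\sup_{x\in\D}\sum_{i>N}\eta_i\,e_i(x)^2\le\Xi_N.
\]
For increments, I reuse the interpolation trick from Lemma~\ref{lem:a}: choose $0<b\le\min(1,\beta/(2\alpha))$ and use $|e_i(x)-e_i(y)|\le2$ together with $|e_i(x)-e_i(y)|\le C_e i^\alpha\|x-y\|_2$ to get, for $x,y\in\D$ and $k\in\N$,
\[
\E\bigl(|R_N(x)-R_N(y)|^{2k}\bigr)\le\frac{(2k)!}{2^k k!}\,2^{2(1-b)k}C_e^{2bk}\Bigl(\sum_{i>N}\eta_i i^{2\alpha b}\Bigr)^k\|x-y\|_2^{2bk}.
\]
Since $\sum_{i>N}\eta_i i^{2\alpha b}\le C_\eta$ by Assumption~\ref{ass:EV}~(i), this bound is \emph{uniform in~$N$}. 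Choosing $k>d/(2b)$ and invoking Kolmogorov--Chentsov yields a H\"older continuous modification of $R_N$ with H\"older exponent $\gamma\in(0,(2bk-d)/(2k))$ and, in particular, $R_N$ is $\bP$-a.s.~bounded on~$\D$.

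Now I apply Theorem~\ref{thm:tail} to $R_N$. With $\varrho_N(\varepsilon):=\sup_{\|x-y\|_2<\varepsilon}\E((R_N(x)-R_N(y))^2)^{1/2}$ the previous display shows $\varrho_N(\varepsilon)\le K\varepsilon^{b}$ for a constant $K$ independent of $N$; hence the associated entropy function $\Theta_N(\delta)=\int_0^\infty\varrho_N(\delta e^{-y^2})dy$ and its inverse are controlled from above by expressions that do not depend on $N$. Theorem~\ref{thm:tail} therefore gives, for every $c>0$,
\[
\bP\bigl(\sup_{x\in\D}R_N(x)>c\bigr)\le C\bigl(\Theta^{-1}(1/c)\bigr)^{-d}\exp\!\Bigl(-\frac{c^2}{2\Xi_N}\Bigr),
\]
where I used $\bar\sigma_N\le\Xi_N^{1/2}$ in the exponent and absorbed the $N$-independent prefactor into~$C$. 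By the symmetry $R_N\stackrel{d}{=}-R_N$, the same bound (up to a factor~2) controls $\bP(\|R_N\|_{L^\infty(\D)}>c)$.

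Finally I pass to the $L^p$-moment via Fubini and integration by parts:
\[
\E\bigl(\|R_N\|_{L^\infty(\D)}^p\bigr)=\int_0^\infty p c^{p-1}\,\bP\bigl(\|R_N\|_{L^\infty(\D)}>c\bigr)\,dc,
\]
split the integral at $c=\Xi_N^{1/2}$, bound the trivial part by $\Xi_N^{p/2}$, and in the tail part substitute $c=\Xi_N^{1/2}t$. The Gaussian factor produces $\exp(-t^2/2)$, while the algebraic prefactor $(\Theta^{-1}(1/c))^{-d}$ is a slowly varying function that is integrable against the Gaussian, yielding a finite constant $C_p$ depending only on $p$, $d$, $\alpha$, $\beta$, $C_e$, $C_\eta$, $|\D|$. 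This gives exactly
\[
\E\bigl(\|R_N\|_{L^\infty(\D)}^p\bigr)\le C_p^{\,p}\,\Xi_N^{p/2},
\]
which is the claim after taking the $p$-th root.

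\paragraph{Main obstacle.} The delicate step is not the Gaussian concentration itself but keeping the algebraic prefactor $(\Theta_N^{-1}(1/c))^{-d}$ under uniform control in $N$: one must verify that the increment estimate derived from Assumption~\ref{ass:EV}~(i) yields bounds on $\varrho_N$ and hence on $\Theta_N^{-1}$ that do not deteriorate as $N\to\infty$, so that the only $N$-dependence surviving the integration is the Gaussian scale $\bar\sigma_N^2\le\Xi_N$ in the exponent. Once this uniform-in-$N$ control is in place, the promised rate $\Xi_N^{1/2}$ follows from the substitution $c=\Xi_N^{1/2}t$.
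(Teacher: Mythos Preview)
Your approach is essentially the paper's: apply Theorem~\ref{thm:tail} to the remainder $R_N=W-W_N$, feed in the $N$-uniform increment bound $\varrho_N(\epsilon)\le K\epsilon^b$ coming from Assumption~\ref{ass:EV}~(i), symmetrize, and convert the tail bound to an $L^p$-moment via the layer-cake formula. The one place where the paper is more explicit is the final integration. Rather than leaving the prefactor abstract and calling it ``slowly varying'', the paper inverts the bound $\Theta(\delta)\le C_{e,b}\,\delta^b\sqrt{\pi}/(2\sqrt b)$ to get the concrete polynomial estimate $(\Theta^{-1}(1/c))^{-d}\le C_{e,\alpha,b}\,c^{d/b}$, so the full tail integral becomes a closed-form Gaussian moment
\[
\int_0^\infty c^{p-1+d/b}\exp\!\Bigl(-\frac{c^2}{2\ol\sigma_N^2}\Bigr)\,dc
=2^{(p+d/b)/2-1}\,\Gamma\!\Bigl(\frac{p+d/b}{2}\Bigr)\,\ol\sigma_N^{\,p+d/b},
\]
and one then splits $\ol\sigma_N^{\,p+d/b}=\ol\sigma_N^{\,p}\cdot\ol\sigma_N^{\,d/b}\le \Xi_N^{p/2}\cdot\mathrm{tr}(Q)^{d/(2b)}$ to isolate the claimed rate.

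Your substitution $c=\Xi_N^{1/2}t$ is fine in spirit, but as written it leaves a small hole: after the change of variables the prefactor becomes $(\Theta^{-1}(\Xi_N^{-1/2}t^{-1}))^{-d}$, which under your own bound behaves like $\Xi_N^{d/(2b)}t^{d/b}$, so the tail integral still carries an extra factor $\Xi_N^{d/(2b)}$ beyond the promised $\Xi_N^{p/2}$. This is harmless once you invoke $\Xi_N\le\mathrm{tr}(Q)$, but that is exactly the step your ``Main obstacle'' paragraph advertises and your computation never actually performs. Either make the polynomial prefactor explicit before integrating (as the paper does) or insert the bound $\Xi_N^{d/(2b)}\le\mathrm{tr}(Q)^{d/(2b)}$ after the substitution; once you do, the preliminary split at $c=\Xi_N^{1/2}$ is unnecessary.
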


\begin{proof}[Proof of Theorem 3.8]
	With a similar continuity argument as in Lemma~\ref{lem:a}, we obtain that 
	\bee
	(W-W_N)(x)-(W-W_N)(y)\sim\cN(0,\sigma_N^2(x,y))
	\eee 
	where $\sigma_N^2(x,y):=\sum_{i>N} \eta_i(e_i(x)-e_i(y))^2$ for all $x,y,\in\D$. 
	Hence 
	\bee
	\ol\sigma^2_N:=\sup_{x\in\D}\E((W-W_N)(x)^2)=\sum_{i>N}\eta_ie_i(x)^2\le \sum_{i>N}\eta_i=\Xi_N
	\eee
	and we see by the proof of Lemma~\ref{lem:a} that for $\varrho$ as in Theorem~\ref{thm:tail} 
	$$\varrho(\epsilon)\le 2^{1-b}C_e^b\left(\sum_{i\in\N}\eta_ii^{2\ga b}\right)^{1/2}\epsilon^b =:C_{e,b} \epsilon^b$$ 
	for any $b\in (0,\min(\gb/(2\ga),1))$ and each $\epsilon>0$.
	We use the estimate on $\varrho$ to bound $\Theta$ for $\gd>0$ by
	\bee
	\Theta(\gd)\le C_{e,\ga}\int_0^\infty \gd^b \exp(-y^2b)dy=\frac{C_{e,b}\gd^b\sqrt{\pi}}{2\sqrt b}.
	\eee
	Since $\Theta$ and $\Theta^{-1}$ are monotone increasing this yields with Theorem~\ref{thm:tail} for any $c>0$
	\bee
	\bP(\sup_{x\in\D}(W-W_N)(x)>c)\le C \left(\frac{2\sqrt b}{C_{e,b}\sqrt{\pi}}\right)^{-d/b}c^{d/b}\exp(-\frac{c^2}{2\ol\gs^2_N}):=C_{e,\ga,b}\,c^{d/b}\exp(-\frac{c^2}{2\ol\gs^2_N}).
	\eee
	Using again that $\bP(\sup_{x\in\D}|(W-W_N)|(x)>c)\le 2\bP(\sup_{x\in\D}(W-W_N)(x)>c)$ and Fubini's Theorem, we have for any $p\ge1$
	\begin{align*}
	\E(||W-W_N||^p_{L^\infty(\D)})&=\int_0^\infty pc^{p-1}\bP(\sup_{x\in\D}|(W-W_N)(x)|>c)dc\\
	&\le 2 C_{e,\ga,b}\, p\int_0^\infty c^{p-1+d/b}\exp(-\frac{c^2}{2\ol\gs^2_N})dc\\
	&= 2^{1+p/2+d/(2b)}C_{e,\ga,b}\, p\gG((p+d/b)/2) \ol\gs_N^{p+d/b}\\
	&\le 2^{1+p/2+d/(2b)}C_{e,\ga,b}\, p\gG((p+d/b)/2) tr(Q)^{d/b}\Xi_N^{p/2},
	\end{align*}
	where $\gG(\cdot)$ is the Gamma function and we have used the substitution $z=c^2/(2\ol\gs_N^2)$ in the second equality. 
	This proves the claim because the above estimate holds for any $b\in (0,\min(\gb/(2\ga),1))$.
\end{proof}
\begin{rem}
	In \cite{C12}, the author proves a similar error bound for the truncation error in the Gaussian field, namely 
	\bee
	\E(||W-W_N||_{L^p(\gO;C^{0,\gg}(\cD))})\le C_{b,\gg,p}\max\left(\sum_{i>N}\eta_i\,,\,C_e^{2b}\sum_{i>N}\eta_ii^{2b}\right)^{1/2}.
	\eee
	In the jump-diffusion setting the error bound in Theorem~\ref{thm:w_error} is advantageous for several reasons:
	\begin{itemize}
		\item In our setting, $a$ will in general not have H\"older continuous paths, but involves discontinuities, hence only the error in $L^p(\gO;L^\infty(\cD))$ is of interest. 
		\item In general it is rather easy to calculate the sum $\Xi_N$ if the first $N$ eigenvalues are known (or can be approximated), whereas this is not necessarily the case for  $\sum_{i>N}\eta_ii^{2b}$.
	\end{itemize}
\end{rem} 

\begin{thm}\label{thm:p_error}
	Under Assumption~\ref{ass:EV}, the sampling error is, for all $s\in[1,\widetilde{s}]$, bounded by
	\bee
	||P-P_\eps||_{L^s(\gO;L^\infty(\cD))}\le (\gl(\D)\eps)^{1/s}.
	\eee
\end{thm}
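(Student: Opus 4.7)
The plan is to exploit two structural features already built into the definition of $P$ and $P_\eps$: the sets $\cT_1,\dots,\cT_\tau$ form a \emph{partition} of $\D$, and the sequence $(P_i,\widetilde P_i)_{i\in\N}$ is \emph{independent} of $\tau$. Together these let us reduce the $L^\infty$-norm in space to a finite maximum over at most $\tau$ summands and then apply a Wald-type identity.

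First I would fix $\omega\in\gO$ and note that for every $x\in\D$ at most one of the indicators $\1_{\cT_i}(x)$ is nonzero, so
\begin{equation*}
(P-P_\eps)(\omega,x)=\sum_{i=1}^{\tau(\omega)}\1_{\cT_i}(x)\bigl(P_i(\omega)-\widetilde P_i(\omega)\bigr),
\end{equation*}
and taking the supremum over $x\in\D$ gives the pointwise identity
\begin{equation*}
\|P(\omega,\cdot)-P_\eps(\omega,\cdot)\|_{L^\infty(\D)}=\max_{1\le i\le\tau(\omega)}|P_i(\omega)-\widetilde P_i(\omega)|.
\end{equation*}
This is the key simplification: no covering, uniform continuity or chaining argument is needed, because the partition structure collapses the spatial supremum to a maximum over a random but finite index set.

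Next I would raise to the $s$-th power, bound the maximum by the sum, take expectation, and use the independence of $(P_i-\widetilde P_i)_{i\in\N}$ from $\tau$ together with Tonelli:
\begin{equation*}
\E\bigl(\|P-P_\eps\|_{L^\infty(\D)}^s\bigr)\le \E\!\left(\sum_{i=1}^{\tau}|P_i-\widetilde P_i|^s\right)=\sum_{i=1}^\infty\bP(i\le\tau)\,\E\bigl(|P_i-\widetilde P_i|^s\bigr).
\end{equation*}
Now Assumption~\ref{ass:EV}~(iv) gives the uniform bound $\E(|P_i-\widetilde P_i|^s)\le\eps$, and $\sum_{i\ge1}\bP(i\le\tau)=\E(\tau)=\gl(\D)$ by definition of $\tau$, so the right-hand side is at most $\gl(\D)\eps$. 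Taking the $s$-th root yields the claimed inequality. The only subtle point (and the one I would flag most carefully) is the legitimate interchange of sum and expectation in the step that turns the random truncation $\tau$ into the deterministic mean $\gl(\D)$: this requires exactly the independence of $(P_i,\widetilde P_i)$ from $\tau$ that is assumed in Def.~\ref{def:a}, and Tonelli applies because every term is non-negative. No integrability issue arises since $\E(\tau)=\gl(\D)<\infty$.
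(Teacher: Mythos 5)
Your proof is correct and follows essentially the same route as the paper: reduce the spatial $L^\infty$-norm to a maximum over the $\tau$ partition elements, bound that maximum by a sum, and use the independence of $(P_i,\widetilde P_i)$ from $\tau$ together with $\E(\tau)=\gl(\D)$ to conclude. The only difference is presentational --- the paper passes through the layer-cake formula and a union bound on the events $\{|P_j-\widetilde P_j|\ge c\}$, whereas you apply the Wald/Tonelli identity directly to the random variables, which is a slightly more streamlined version of the same estimate.
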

\begin{proof}
	For any $\go\in\gO$, we have
	\bee
	||P(\go,\cdot)-P_\eps(\go,\cdot)||_{L^\infty(\D)}=\max_{i=1,\dots,\tau(\go)}|P_i(\go)-\widetilde P_i(\go)|^s.
	\eee
	By Fubini's Theorem and integration by parts this yields
	\bee
	||P-P_\eps||^s_{L^s(\gO;L^\infty(\D))}=\int_0^\infty sc^{s-1}\bP\left(\max_{i=1,\dots,\tau}|P_i-\widetilde P_i|\ge c\right)dc,
	\eee
	since $\lim\limits_{c\to+\infty}\bP(\max_{i=1,\dots,\tau}|P_i-\widetilde P_i|\ge c) = 0$. For fixed $c>0$ and $i\in\N$, we define the sets 
	\bee
	T_i:=\{\go\in\gO|\tau(\go)=i\}\quad\text{and}\quad A_i:=\{\go\in\gO|\,|P_i(\go)-\widetilde P_i(\go)|\ge c\}
	\eee
	to obtain the identity
	\bee
	\bP\left(\max_{i=1,\dots,\tau}|P_i-\widetilde P_i|\ge c\right)=\bP\left(\bigcup_{i\in\N}T_i\cap(\bigcup_{j=1}^iA_j)\right).
	\eee
	By the independence of $|P_i-\widetilde P_i|$ and $\tau$ this yields
	\begin{align*}
	\bP\left(\max_{i=1,\dots,\tau}|P_i-\widetilde P_i|\ge c\right)
	\le\sum_{i\in\N}\bP\left(T_i\cap(\bigcup_{j=1}^iA_j)\right)=\sum_{i\in\N}\bP(T_i)\bP(\bigcup_{j=1}^iA_j)\le\sum_{i\in\N}\bP(T_i)\sum_{j=1}^i\bP(A_{j})
	\end{align*}
	and thus by Fubini's theorem
	\begin{align*}
	||P-P_\eps||^s_{L^s(\gO;L^\infty(\D))} & = \int_0^\infty \int_0^y sc^{s-1}dc \bP(\|P-P_\eps\|_{L^\infty(\D)}\in dy)\\
	&= \int_0^\infty s c^{s-1} \bP(\|P-P_\eps\|_{L^\infty(\D)}\ge c)\,dc\\
	&\le\int_{0}^\infty\sum_{i\in\N}\bP(\tau=i)\sum_{j=1}^isc^{s-1}\bP(|P_j-\widetilde P_j|\ge c)\,dc\\
	&=\sum_{i\in\N}\bP(\tau=i)\sum_{j=1}^i\int_0^\infty sc^{s-1}\bP(|P_j-\widetilde P_j|\ge c)\,dc\\
	&=\sum_{i\in\N}\bP(\tau=i)\sum_{j=1}^i\E(|P_j-\widetilde P_j|^s)\\
	&\le\sum_{i\in\N}\bP(\tau=i)i\eps.
	\end{align*}
	The claim then follows since $\E(\tau)=\gl(\D)<+\infty$ by Definition~\ref{def:a}.\\
\end{proof}

With Theorems~\ref{thm:w_error} and \ref{thm:p_error}, follows convergence of the approximated diffusion coefficient:
\begin{thm}\label{thm:a_error}
	Let Assumption~\ref{ass:EV} hold, then there exists a constant $C(s,\Phi,Q,\gl,\D)>0$, depending only on the indicated parameters, such that for any $N\in\N$ and $\eps>0$
	\bee
	||a-a_{N,\eps}||_{L^s(\gO;L^\infty(\D))}\le C(s,\Phi,Q,\gl,\D)\left(\Xi_N^{1/2}+\eps^{1/s}\right).
	\eee
	Hence, $a_{N,\eps}$ converges to $a$ in $L^s(\gO;L^\infty(\D))$ as $N\to\infty$ and $\eps\to 0$.
\end{thm}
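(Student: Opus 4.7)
The plan is to split the error into the Gaussian contribution and the jump contribution via the triangle inequality
\begin{equation*}
\|a-a_{N,\eps}\|_{L^s(\gO;L^\infty(\D))}\le \|\Phi(W)-\Phi(W_N)\|_{L^s(\gO;L^\infty(\D))}+\|P-P_\eps\|_{L^s(\gO;L^\infty(\D))},
\end{equation*}
since $\overline a$ cancels. The second term is handled directly by Theorem~\ref{thm:p_error}, which yields the contribution $(\gl(\D)\eps)^{1/s}$. The bulk of the work is therefore the estimate of the first summand, which I would like to reduce to Theorem~\ref{thm:w_error}.

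To pass from $\|W-W_N\|$ to $\|\Phi(W)-\Phi(W_N)\|$, I would apply the mean value theorem pointwise in $x\in\D$ together with Assumption~\ref{ass:EV}~(ii) on $|\Phi'|$. Since the intermediate value lies between $W(x)$ and $W_N(x)$, this gives
\begin{equation*}
|\Phi(W(x))-\Phi(W_N(x))|\le \phi_2\exp\bigl(\psi_2(|W(x)|+|W_N(x)|)\bigr)\,|W(x)-W_N(x)|,
\end{equation*}
so that taking $L^\infty(\D)$ yields the pathwise bound
\begin{equation*}
\|\Phi(W)-\Phi(W_N)\|_{L^\infty(\D)}\le \phi_2\exp\bigl(\psi_2(\|W\|_{L^\infty(\D)}+\|W_N\|_{L^\infty(\D)})\bigr)\,\|W-W_N\|_{L^\infty(\D)}.
\end{equation*}
I would then apply H\"older's inequality on $(\gO,\cA,\bP)$ with conjugate exponents $1/s=1/s_1+1/s_2$, separating the exponential factor in $L^{s_1}(\gO)$ from $\|W-W_N\|_{L^\infty(\D)}$ in $L^{s_2}(\gO;L^\infty(\D))$.

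The key sub-step, and the main technical obstacle, is showing that the exponential moment $\E\bigl[\exp(s_1\psi_2(\|W\|_{L^\infty(\D)}+\|W_N\|_{L^\infty(\D)}))\bigr]$ is finite and uniformly bounded in~$N$. This follows from a Fernique-type argument: the sub-Gaussian tail estimate \eqref{eq:tail} (which was established in the proof of Lemma~\ref{lem:a} for $W$ and carries over to $W_N$ with variance $\sum_{i\le N}\eta_i\le tr(Q)$) together with the elementary inequality $s_1\psi_2 t\le \tfrac{1}{4\overline\sigma^2}t^2+(s_1\psi_2)^2\overline\sigma^2$ reduces finiteness of the exponential moment to a Gaussian exponential integral already controlled in Lemma~\ref{lem:a}. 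Crucially, the bound depends only on $tr(Q)$ and $\psi_2$, not on $N$. Applying Cauchy--Schwarz once more to split $\|W\|_{L^\infty}$ and $\|W_N\|_{L^\infty}$ gives a constant depending only on $s,\Phi,Q,\D$.

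Combining the H\"older estimate with Theorem~\ref{thm:w_error} applied in $L^{s_2}(\gO;L^\infty(\D))$ produces $\|\Phi(W)-\Phi(W_N)\|_{L^s(\gO;L^\infty(\D))}\le C'\,\Xi_N^{1/2}$ with $C'=C'(s,\Phi,Q,\D)$. Adding this to the jump bound from Theorem~\ref{thm:p_error} gives the claimed estimate with $C(s,\Phi,Q,\gl,\D):=\max(C',\gl(\D)^{1/s})$. Finally, convergence of $a_{N,\eps}$ to $a$ in $L^s(\gO;L^\infty(\D))$ as $N\to\infty$ and $\eps\to0$ follows because $\Xi_N\to0$ by the summability condition $\sum_i\eta_i i^\gb<\infty$ in Assumption~\ref{ass:EV}~(i), which in particular implies $\sum_i\eta_i<\infty$.
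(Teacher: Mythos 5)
Your proposal is correct and follows essentially the same route as the paper's proof: mean value theorem with the growth bound on $\Phi'$, H\"older's inequality to separate the exponential factor, a Fernique-type argument showing the exponential moments of $\|W\|_{L^\infty(\D)}$ and $\|W_N\|_{L^\infty(\D)}$ are finite uniformly in $N$, and then Theorems~\ref{thm:w_error} and~\ref{thm:p_error}. The only cosmetic difference is that the paper bounds $|\Phi'(\xi_N)|$ by $\phi_2\exp(\psi_2(|W_N|+|W-W_N|))$ and exploits the independence of $W_N$ and $W-W_N$ to factor one expectation, whereas you bound it by $\phi_2\exp(\psi_2(|W|+|W_N|))$ and use an extra Cauchy--Schwarz; both yield the same $N$-uniform constant.
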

\begin{proof}
	Let $N\in\N, \eps>0$ and $(\go,x)\in\gO\times\D$ be fixed. 
	By the mean-value theorem
	\bee
	a(\go,x)-a_{N,\eps}(\go,x)=\Phi'(\xi_N(\go,x))(W(\go,x)-W_N(\go,x))+P(\go,x)-P_\eps(\go,x),
	\eee
	where $\xi_N(\go,x)\in(W(\go,x),W_N(\go,x))$.
	With Assumption~\ref{ass:EV}~(ii) on $\Phi'$ and by the triangle inequality, we obtain the estimate
	\bee
	|\Phi'(\xi_N(\go,x))|\le\phi_2\exp(\psi_2(|W_N(\go,x)|+|W(\go,x)-W_N(\go,x)|)).
	\eee
	The random fields $W_N$ and $W-W_N$ are independent, so for any $s\in[1,\infty)$ it holds that  
	\begin{equation}\label{eq:aw}
	\begin{split}
	&||\exp(\psi_2(|W_N|+|W-W_N|))(W-W_N)||_{L^s(\gO;L^\infty(\D))}\\
	&\quad\le||\exp(\psi_2|W_N|)||_{L^s(\gO;L^\infty(\D))}||\exp(\psi_2|W-W_N|^2)||_{L^{p_1}(\gO;L^\infty(\D))}||(W-W_N)||_{L^{p_2}(\gO;L^\infty(\D))}
	\end{split}
	\end{equation}
	where we have used H\"older's inequality with $p_1,p_2>s$ such that $1/s=1/p_1+1/p_2$.
	With Young's inequality we obtain
	\begin{align*}
	||\exp(\psi_2|W-W_N|)||_{L^{p_1}(\gO;L^\infty(\D))}^{p_1}&\le\E(\exp(\psi_2p_1||W-W_N||_{L^\infty(\D)})\\
	&\le\exp(\psi_2^2p_1^2 tr(Q))\E\Big(\exp\big(\frac{||W-W_N||^2_{L^\infty(\D)}}{4tr(Q)}\big)\Big)\\
	&=:C_2(p_1,\psi_2,Q).
	\end{align*}
	Since $(W-W_N)(\cdot,x)\sim\cN(0,\gs^2_N(x))$ with $\gs^2_N(x):= \sum_{i>N}\eta_ie_i(x)^2\le tr(Q)$, we proceed as in Lemma~\ref{lem:a}
	and conclude by Fernique's theorem that $C_2(p_1,\Phi,Q)<\infty$ and note that $C_2(p_1,\Phi,Q)$ is independent of $N$.
	Similarly, 
	\begin{align*}
	||\exp(\psi_2|W_N|)||_{L^s(\gO;L^\infty(\D))}^s
	&\le\exp(\psi_2^2s^2 tr(Q))\E\Big(\exp\big(\frac{||W_N||^2_{L^\infty(\D)}}{4tr(Q)}\big)\Big):=C_1(s,\psi_2,Q)<\infty
	\end{align*}
	for any $s$ and $C_1(s,\psi_2,Q)$ is independent of $N$. 
	Altogether, this yields 
	\bee
	||a-a_{N,\eps}||_{L^s(\gO;\R)}\le \phi_2C_1(s,\psi_2,Q))^{1/s}C_2(p_1,\psi_2,Q)^{1/p_1}||W-W_N||_{L^{p_2}(\gO;L^\infty(\D)}+||P-P_\eps||_{L^s(\gO;L^\infty(\D))}
	\eee
	and the claim follows by Theorems~\ref{thm:w_error} and \ref{thm:p_error}.
\end{proof}
\begin{cor}\label{cor:a_as}
	Let Assumption~\ref{ass:EV} hold, then there exists a sequence of approximation parameters $((N_i,\eps_i), i\in\N)$ in $\N^\N\times(0,\infty)^\N$, depending only on $\gb$ and $s$, such that the error $||a-a_{N_i,\eps_i}||_{L^\infty(\D)}$ converges to zero $\bP$-almost surely as $i\to\infty$.
\end{cor}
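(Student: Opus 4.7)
The plan is to upgrade the $L^s(\gO;L^\infty(\D))$ convergence of Theorem~\ref{thm:a_error} to almost sure convergence along a suitably chosen subsequence of approximation parameters, via a standard Borel--Cantelli / monotone convergence argument. The crucial inputs are: (i) the decay of the Karhunen--Lo\`eve tail $\Xi_N=\sum_{i>N}\eta_i$, which by Assumption~\ref{ass:EV}~(i) satisfies $\Xi_N\le N^{-\gb}\sum_{i>N}\eta_i i^{\gb}\le C_\eta N^{-\gb}$; (ii) the freedom to choose $\eps_i>0$ arbitrarily small in the jump sampling error; and (iii) the Markov-type estimate coming from Theorem~\ref{thm:a_error}.

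First, fix any $s\in[1,\widetilde s]$ for which Theorem~\ref{thm:a_error} applies; denote the resulting constant simply by $C$. Then choose the sequence $((N_i,\eps_i),i\in\N)$ in such a way that both $\Xi_{N_i}^{1/2}$ and $\eps_i^{1/s}$ decay faster than any summable rate, for instance
\[
N_i:=\bigl\lceil i^{3/\gb}\bigr\rceil,\qquad \eps_i:=i^{-3s},
\]
so that $\Xi_{N_i}^{1/2}\le C_\eta^{1/2} i^{-3/2}$ and $\eps_i^{1/s}=i^{-3}$. By Theorem~\ref{thm:a_error} this gives
\[
\E\bigl(\|a-a_{N_i,\eps_i}\|_{L^\infty(\D)}^s\bigr)=\|a-a_{N_i,\eps_i}\|_{L^s(\gO;L^\infty(\D))}^s\le C^s\bigl(\Xi_{N_i}^{1/2}+\eps_i^{1/s}\bigr)^s,
\]
whose right-hand side is summable in $i$.

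Next, by monotone convergence (or equivalently by Tonelli's theorem applied to $\sum_i \|a-a_{N_i,\eps_i}\|_{L^\infty(\D)}^s$), summability of the expectations implies
\[
\E\!\left(\sum_{i\in\N}\|a-a_{N_i,\eps_i}\|_{L^\infty(\D)}^s\right)<\infty,
\]
so $\sum_{i\in\N}\|a-a_{N_i,\eps_i}\|_{L^\infty(\D)}^s<\infty$ for $\bP$-almost every $\go\in\gO$. In particular, the general term of this convergent series tends to zero, which yields $\|a-a_{N_i,\eps_i}\|_{L^\infty(\D)}\to 0$ $\bP$-almost surely as $i\to\infty$. (Alternatively, one may argue via Markov's inequality and Borel--Cantelli: for every $\gd>0$, $\sum_i\bP(\|a-a_{N_i,\eps_i}\|_{L^\infty(\D)}>\gd)\le\gd^{-s}\sum_i\E(\|a-a_{N_i,\eps_i}\|_{L^\infty(\D)}^s)<\infty$.)

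No real obstacle is expected here; the proof is essentially a bookkeeping exercise on top of Theorem~\ref{thm:a_error}. The only mildly delicate point is to verify that the chosen sequence $(N_i,\eps_i)$ indeed depends only on $\gb$ and $s$ (as claimed in the statement): the exponents $3/\gb$ and $3s$ above depend only on $\gb$ and $s$, and any other constants entering the bound ($C_\eta$, $C$, and the constants hidden in Theorem~\ref{thm:a_error}) are absorbed into the overall $L^s$-bound without affecting the choice of $N_i$ and $\eps_i$ themselves.
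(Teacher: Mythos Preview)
Your proof is correct and follows essentially the same route as the paper: bound $\Xi_{N_i}\le C_\eta N_i^{-\gb}$ via Assumption~\ref{ass:EV}(i), invoke Theorem~\ref{thm:a_error}, and then pick $(N_i,\eps_i)$ so that the resulting $L^s$-bounds are summable, concluding by Borel--Cantelli (the paper uses Markov's inequality plus Borel--Cantelli directly, with the generic choice $N_i=\lceil i^{\gd/(\gb s)}\rceil$, $\eps_i=i^{-\gd/2}$ for any $\gd>2$, whereas you fix the exponent to~$3$). Your alternative monotone-convergence phrasing is an equivalent packaging of the same argument.
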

\begin{proof}
	For any $\epsilon>0$ we get by Markov's inequality 
	\bee
	\bP(||a_{N_i,\eps_i}-a||_{L^\infty(\D)}\ge\epsilon)\le\frac{||a_{N_i,\eps_i}-a||^s_{L^s(\gO;L^\infty(\D))}}{\epsilon^s}.
	\eee
	Using Theorem~\ref{thm:a_error} and the inequality $(a+b)^s\le 2^{s-1}(a^s+b^s)$ for $a,b>0,s\ge1$ this leads to
	\bee
	\sum_{i\in\N}\bP(||a_{N_i\eps_i}-a||_{L^\infty(\D)}\ge\epsilon)\le\frac{2^{s-1}C(s,\psi_2,Q,\gl,\D)^s}{\epsilon^s}\left(\sum_{i\in\N}\Xi_{N_i}^{s/2}+\eps_i\right).
	\eee
	By Assumption~\ref{ass:EV}, there exists $\gb>0$ such that
	\bee
	\Xi_{N_i}=\sum_{j>N_i}\eta_jj^\gb j^{-\gb}\le N_i^{-\gb}\sum_{j>N_i}\eta_jj^\gb\le N_i^{-\gb}\sum_{j\in\N}\eta_jj^\gb\le C_\eta N_i^{-\gb}.
	\eee
	Now, choosing $\gd>2$, $N_i:=\ceil{i^{\gd/\gb s}}$ and $\eps_i:=i^{-\gd/2}$ for $i\in\N$ yields the estimate
	\bee
	\sum_{i\in\N}\bP(||a_{N_i,\eps_i}-a||_{L^\infty(\D)}\ge\epsilon)\le\frac{2^{s-1}C(s,\psi_2,Q,\gl,\D)^s}{\epsilon^s}\left(C_\eta^{s/2}+1\right)\sum_{i\in\N}i^{-\gd/2}<\infty,
	\eee
	and the sequence $(a_{N_i,\eps_i}, i\in\N)$ converges almost surely by the Borel-Cantelli lemma.
\end{proof}

\subsection{Convergence of the approximated solution.}
We conclude this section by showing the convergence of $u_{N,\eps}$ towards $u$, given that $a_{N,\eps}\to a$ in $L^s(\gO;L^\infty(\cD))$.

\begin{thm}\label{thm:u_error}
	Let Assumptions~\ref{ass:EV} hold such that $t:=(2/p+1/q+1/s)^{-1}\ge1$, where $p\in[1, (\eta^*)^{-1})$ for $\eta^*$ as in Lemma~\ref{lem:a}.
	Then $u_{N,\eps}$ converges to $u$ in $L^t(\gO;V)$ as $N\to+\infty$ and $\eps\to0$.
\end{thm}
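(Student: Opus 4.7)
The plan is to derive a pathwise energy-type estimate for the difference $w(\omega,\cdot):=u(\omega,\cdot)-u_{N,\eps}(\omega,\cdot)\in V$ and then integrate in $\omega$ using Hölder's inequality, where the three factors are chosen so that their integrability exponents are exactly $p$, $s$, and $r:=(1/p+1/q)^{-1}$, producing the prescribed exponent $t$ via $1/t=2/p+1/q+1/s$.

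First, for $\bP$-almost every $\omega$, I would subtract the two variational identities \eqref{eq:var} and \eqref{eq:var_N} for $u(\omega,\cdot)$ and $u_{N,\eps}(\omega,\cdot)$ (both tested against the same $v\in V$). Rewriting, this gives
\begin{equation*}
B_{a_{N,\eps}(\omega)}(w(\omega,\cdot),v)=\int_\D(a_{N,\eps}(\omega,x)-a(\omega,x))\grad u(\omega,x)\cdot\grad v(x)\,dx.
\end{equation*}
Choosing $v=w(\omega,\cdot)$ and using the coercivity estimate for $B_{a_{N,\eps}(\omega)}$ already established in the proof of Theorem~\ref{thm:pw_u} (with $a_{N,\eps,-}(\omega)$ in place of $a_-(\omega)$), together with the Cauchy--Schwarz inequality on the right-hand side, yields the pathwise bound
\begin{equation*}
\|w(\omega,\cdot)\|_V\le \frac{2\max(1,C_{|\D|}^2)}{a_{N,\eps,-}(\omega)}\,\|a_{N,\eps}(\omega,\cdot)-a(\omega,\cdot)\|_{L^\infty(\D)}\,\|u(\omega,\cdot)\|_V.
\end{equation*}

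Next, I would take the $L^t(\gO;\R)$-norm of this pointwise inequality and apply Hölder's inequality with the three exponents $p$, $s$, and $r:=(1/p+1/q)^{-1}$; since $p<(\eta^*)^{-1}$ it follows that $r<(1/q+\eta^*)^{-1}$, so that $u\in L^r(\gO;V)$ by Lemma~\ref{lem:a}. The exponent computation gives $1/p+1/s+1/r=2/p+1/q+1/s=1/t$, matching the hypothesis $t\ge1$. Hence
\begin{equation*}
\|u-u_{N,\eps}\|_{L^t(\gO;V)}\le C\,\|1/a_{N,\eps,-}\|_{L^p(\gO;\R)}\,\|a-a_{N,\eps}\|_{L^s(\gO;L^\infty(\D))}\,\|u\|_{L^r(\gO;V)}.
\end{equation*}
By Lemma~\ref{lem:a_n}, the first factor is bounded uniformly in $N$ and $\eps$; the third factor is a fixed finite constant; and Theorem~\ref{thm:a_error} gives that the middle factor tends to $0$ as $N\to\infty$ and $\eps\to 0$, at rate $\Xi_N^{1/2}+\eps^{1/s}$. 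This yields the claimed convergence.

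The main obstacle is bookkeeping the integrability exponents so that the Hölder split closes at $t$; in particular one must verify that $r\ge 1$ and lies in the admissible range of Lemma~\ref{lem:a}, which follows precisely from $p\in[1,(\eta^*)^{-1})$ and $q\ge(1-\eta^*)^{-1}$ as in Assumption~\ref{ass:EV}~(iii). The remaining arguments, namely coercivity of $B_{a_{N,\eps}(\omega)}$ with the $\omega$-dependent constant $a_{N,\eps,-}(\omega)$ and pathwise well-posedness of both $u$ and $u_{N,\eps}$, have already been established and are applied verbatim.
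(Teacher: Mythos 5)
Your proof is correct and follows essentially the same route as the paper: the pathwise identity $B_{a_{N,\eps}(\go)}(u-u_{N,\eps},v)=\int_\D(a_{N,\eps}-a)\grad u\cdot\grad v\,dx$ combined with coercivity is exactly the estimate the paper derives (by manipulating the energy integrals), and the exponent bookkeeping closes at the same $t$. The only cosmetic difference is that you keep $\|u\|_V$ as a single H\"older factor at exponent $r=(1/p+1/q)^{-1}$, whereas the paper first substitutes the a priori bound $\|u\|_V\lesssim a_-^{-1}(\|f\|_H+\|g\|_{L^2(\gG_2)})$ and splits into factors at exponents $p$ and $q$; both are valid and yield the same conclusion.
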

\begin{proof}
	Existence and uniqueness of weak solutions $u\in L^r(\gO;V)$ resp. $u_{N,\eps}\in L^r(\gO;V)$, for $r\in[1,(1/q+\eta^*)^{-1})$, is guaranteed by Lemma~\ref{lem:a} resp. Lemma~\ref{lem:a_n} $\bP$-almost surely.
	For notational convenience, we omit the argument $\go\in\gO$ in the following pathwise estimates with respect to the norm $||\cdot||_V$.
	With Poincar\'e's inequality and $a_{N,\eps,-}(\go)>0$ $\bP$-a.s., we obtain the (pathwise) estimate
	\begin{align*}
	||u-u_{N,\eps}||_V^2&\le\frac{1}{a_{N,\eps,-}}\int_\D a_{N,\eps}(|u-u_{N,\eps}|^2+||\grad u-\grad u_{N,\eps}||_2^2)dx\\
	&\le\frac{1+C_{|\D|}^2}{a_{N,\eps,-}}\int_\D a_{N,\eps}||\grad u-\grad u_{N,\eps}||_2^2dx
	\end{align*}
	where $C_{|\D|}>0$ denotes the Poincar\'e constant. 
	Since $u$ and $u_{N,\eps}$ are weak solutions of Problem~\eqref{eq:elliptic} with Eq.~\eqref{eq:boundary} resp. Eq.~\eqref{eq:u_N}, we have 
	\bee
	\int_\D a_{N,\eps}\grad u\cdot\grad u_{N,\eps}dx=\int_\D a||\grad u||_2^2dx
	\quad
	\text{and}
	\quad
	\int_\D a_{N,\eps}||\grad u_{N,\eps}||_2^2dx=\int_\D a\grad u\cdot\grad u_{N,\eps}dx,
	\eee
	almost surely, and hence
	\bee
	\int_\D a_{N,\eps}||\grad u-\grad u_{N,\eps}||_2^2dx=\int_\D (a_{N,\eps}-a)\grad u\cdot(\grad u-\grad u_{N,\eps})dx.
	\eee
	By H\"older's inequality, $V\subset L^2(\cD)$, Eq.~\eqref{eq:tracebound} and Theorem~\ref{thm:pw_u} we have
	\begin{align*}
	||u-u_{N,\eps}||_V^2&\le\frac{1+C_{|\D|}^2}{a_{N,\eps,-}}||a_{N,\eps}-a||_{L^\infty(\D)}||\grad u||_{L^2(\D)}||\grad (u-u_{N,\eps})||_{L^2(\D)}\\
	&\le\frac{1+C_{|\D|}^2}{a_{N,\eps,-}}||a_{N,\eps}-a||_{L^\infty(\D)} ||u||_V ||u-u_{N,\eps}||_V\\
	&\le\frac{2(1+C_{|\D|}^2)\max(1,C_\D)(||f||_{H}+||g||_{L^2(\gG_2)})}{\min(1,C_{|\D|}^{-2})a_{N,\eps,-}a_-} ||a_{N,\eps}-a||_{L^\infty(\D)} ||u-u_{N,\eps}||_V.
	\end{align*}
	Using $t=(2/p+1/q+1/s)^{-1}\ge1$, the $t$-th moment of the pathwise error is bounded by 
	\be\label{eq:u_error}
	\begin{split}
		||u-u_{N,\eps}||_{L^t(\gO;V)}\le&(1+C_{|\D|}^2)C(a_-,\D,p)||1/a_{N,\eps,-}||_{L^p(\gO;\R)}\\
		&\cdot(||f||_{L^q(\gO;H)}+||g||_{L^q(\gO;L^2(\gG_2)})  ||a_{N,\eps}-a||_{L^s(\gO;L^\infty(\D))},
	\end{split}
	\ee
	where $C(a_-,\D,p)>0$ is as in Theorem~\ref{thm:pw_u}. 
	The convergence now follows by Theorem~\ref{thm:a_error},
	since $||a_{N,\eps,-}^{-1}||_{L^p(\gO;\R)}$ is bounded uniformly in $N$ and $\eps$ by Lemma~\ref{lem:a_n}.
\end{proof}

\begin{cor}
	With the Assumptions of Theorem~\ref{thm:u_error}, there exists a sequence of approximation parameters $((N_i,\eps_i), i\in\N)$ in $\N^\N\times(0,\infty)^\N$, depending only on $\gb$ and $t$, such that $||u_{N_i,\eps_i}-u||_V$ converges to zero $\bP$-almost surely as $i\to+\infty$.
\end{cor}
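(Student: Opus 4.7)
The plan is to mirror the structure of the proof of Corollary~\ref{cor:a_as}, which used Borel--Cantelli together with Markov's inequality and a quantitative $L^s$-bound on the coefficient error. Here we will apply Markov's inequality to $\bP(\|u_{N_i,\eps_i}-u\|_V\ge\epsilon)$, controlling the right-hand side by $\epsilon^{-t}\|u_{N_i,\eps_i}-u\|_{L^t(\gO;V)}^t$, and then bound the $L^t$-norm explicitly in terms of $N_i$ and $\eps_i$ using the results already established.

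First, I would apply Theorem~\ref{thm:u_error}: the bound in Eq.~\eqref{eq:u_error} expresses $\|u_{N,\eps}-u\|_{L^t(\gO;V)}$ as a product in which the factors $C(a_-,\D,p)$, $\|f\|_{L^q(\gO;H)}$, and $\|g\|_{L^q(\gO;L^2(\gG_2))}$ are independent of $(N,\eps)$, and by Lemma~\ref{lem:a_n} the factor $\|1/a_{N,\eps,-}\|_{L^p(\gO;\R)}$ is uniformly bounded in $N$ and $\eps$. Thus there exists a constant $\widetilde C>0$, independent of $i$, such that $\|u_{N_i,\eps_i}-u\|_{L^t(\gO;V)}\le \widetilde C\,\|a_{N_i,\eps_i}-a\|_{L^s(\gO;L^\infty(\D))}$, and an application of Theorem~\ref{thm:a_error} converts this into a rate in $N_i$ and $\eps_i$, namely $\widetilde C'(\Xi_{N_i}^{1/2}+\eps_i^{1/s})$.

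With this in hand, I would use $(a+b)^t\le 2^{t-1}(a^t+b^t)$ to split the bound and invoke Assumption~\ref{ass:EV}~(i) in the same way as in the proof of Corollary~\ref{cor:a_as} to estimate $\Xi_{N_i}\le C_\eta N_i^{-\gb}$. Choosing $\gd>1$ and setting $N_i:=\lceil i^{2\gd/(\gb t)}\rceil$ and $\eps_i:=i^{-\gd s/t}$ yields both $N_i^{-\gb t/2}\le i^{-\gd}$ and $\eps_i^{t/s}=i^{-\gd}$, so that $\sum_{i\in\N}\bP(\|u_{N_i,\eps_i}-u\|_V\ge\epsilon)$ is dominated by a convergent $p$-series. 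The Borel--Cantelli lemma then delivers $\bP$-almost sure convergence. I do not foresee a real obstacle here: all the heavy lifting has already been done in Theorems~\ref{thm:a_error} and~\ref{thm:u_error} and Lemma~\ref{lem:a_n}, and the only point requiring some care is verifying that the constant $\widetilde C$ is genuinely uniform in $(N,\eps)$, which reduces to the uniform $L^p$-bound on $1/a_{N,\eps,-}$ supplied by Lemma~\ref{lem:a_n}.
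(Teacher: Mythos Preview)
Your proposal is correct and follows exactly the route the paper indicates: Markov's inequality with exponent $t$ applied to $\|u_{N_i,\eps_i}-u\|_V$, the uniform-in-$(N,\eps)$ bound from Ineq.~\eqref{eq:u_error} together with Lemma~\ref{lem:a_n}, then Theorem~\ref{thm:a_error} and $\Xi_{N_i}\le C_\eta N_i^{-\gb}$ to obtain summable tail probabilities for Borel--Cantelli. The paper's own proof is a two-line remark pointing back to Corollary~\ref{cor:a_as} with $s$ replaced by $t$; your explicit choice $N_i=\lceil i^{2\gd/(\gb t)}\rceil$, $\eps_i=i^{-\gd s/t}$ with $\gd>1$ is a harmless reparametrisation of the same idea (note that your $\eps_i$ then also depends on $s$, not only on $\gb$ and $t$ as the statement advertises, but this is cosmetic).
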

\begin{proof}
	The proof is analogous to the one of Corollary~\ref{cor:a_as} with Markov's inequality applied to the mapping $x\mapsto x^t$ and Ineq.~\eqref{eq:u_error}.
	The sequence $(N_i,\eps_i, i\in\N)$ may then also be constructed in the same way as in Corollary~\ref{cor:a_as} where we simply replace $s$ by $t$.
\end{proof}

Knowing that $u_{N,\eps}$ converges to $u$ pathwise and in $L^t(\gO;V)$, we aim to estimate moments of $u$ by drawing samples from the distribution of $u_{N,\eps}$.
In general, the distribution of $u_{N,\eps}$ is not known. Further, each pathwise solution is an element of the infinite-dimensional Hilbert space $V$, which in turn means that we are only able to simulate pathwise approximations of the functions $u_{N,\eps}(\go,\cdot)$ in a finite-dimensional subspace of $V$. 
Next, we show how to construct these approximations in some appropriate subspaces $V_\ell$ of $V$ and how the discretization error may be controlled.

\section{Adaptive pathwise discretization}\label{sec:fem}
The variational problem~\eqref{eq:var_N} to Eq.~\eqref{eq:u_N} is to find for almost all $\go\in\gO$ and given $f(\go,\cdot)$, $g(\go,\cdot)$, $N$ and $\eps$ a function $u_{N,\eps}(\go,\cdot)\in V$ such that
\bee\tag{\ref{eq:var_N}}
\begin{split}
	B_{a_{N,\eps}(\go)}(u_{N,\eps}(\go,\cdot),v):&=\int_\D a_{N,\eps}(\go,x)\grad u_{N,\eps}(\go,x)\cdot\grad v(x)dx\\
	&=\int_\D f(\go,x)v(x)dx+\int_{\gG_2}g(\go,x)[Tv](x)dx=F_\go(v)\\
\end{split}
\eee
for all $v\in V$.
To find suitable approximations of $u_{N,\eps}(\go,\cdot)$, we use a standard Galerkin approach with a sequence $\cV=(V_\ell, \ell\in\N_0)$ of finite-dimensional subspaces $V_\ell\subset V$.
The corresponding family of refinement sizes is given by a sequence $(h_\ell, \ell\in\N_0)$, which decreases monotonically to zero as $\ell\to\infty$.
For any $\ell\in\N$, let $d_\ell\in\N$ and $\{v_1,\dots,v_{d_\ell}\}$ be a basis of $V_\ell$.  
The discrete version of the variational formulation~\eqref{eq:var_N} is then: Find $u_{N,\eps,\ell}(\go,\cdot)\in V_\ell$ such that 
\be
B_{a_{N,\eps}(\go)}(u_{N,\eps,\ell}(\go,\cdot),v_\ell)=F_\go(v_\ell)\quad\text{for all $v_\ell\in V_\ell$.}
\ee
The function $u_{N,\eps,\ell}(\go,\cdot)$ may be expanded with respect to the basis $\{v_1,\dots,v_{d_\ell}\}$ as
\bee
u_{N,\eps,\ell}(\go,x)=\sum_{i=1}^{d_\ell}c_iv_i(x),
\eee
where $c_1,\dots,c_{d_\ell}\in\R$ and ${\bf c} := (c_1,\dots,c_{d_\ell})^T$ is the respective coefficient (column-)vector.
With this, the discrete variational problem in the finite-dimensional space $V_\ell$ is equivalent to solving the linear system of equations
\bee
\mathbf A(\go) {\bf c}
=\mathbf F(\go),
\eee
with stochastic stiffness matrix $(\mathbf A(\go))_{ij}=B_{a_{N,\eps}(\go)}(v_i,v_j)$ and load vector $(\mathbf F(\go))_i=F_\go(v_i)$ for $i,j\in\{1,\dots,d_\ell\}$.
Since the jump-diffusion coefficient is not continuous, in general one would not expect the full convergence rate of the Galerkin approximation. 

\begin{ex}\label{ex:fem}
	For a polygonal domain $\D\subset\R^d$, we define by $\cK=(\cK_\ell, \ell\in\N_0)$ a sequence of triangulations on $\D$.
	We denote the minimum interior angle of all triangles in $\cK_\ell$ by $\vartheta_\ell>0$ and assume that there exists some $\vartheta>0$ such that 
	$\inf_{\ell\in\N_0}\vartheta_\ell\ge\vartheta>0$.
	The maximum diameter of each triangulation is defined by
	\bee
	h_\ell:=\max_{K\in\cK_\ell}\text{diam}(K),\quad\ell\in\N_0
	\eee 
	and the finite-dimensional subspaces are given by 
	\bee
	V_\ell:=\{v\in V|\; v|_K\in\cP_1,K\in\cK_\ell\},
	\eee
	where $\cP_1$ is the space of all polynomials up to degree one.
	This yields a sequence $\cV=(V_\ell, \ell\in\N_0)$ of subspaces in $V$ with refinement parameters $(h_\ell, \ell\in\N_0)$. 
	For fixed $\ell\in\N_0$, let $\{v_1,\dots,v_{d_\ell}\}$ be a basis of piecewise linear functions of $V_\ell$.
	Given that $u_{N,\eps}\in L^2(\gO;H^{1+\gk_a}(\D))$ for some $\gk_{a}>0$, the pathwise discretization error is bounded by C\'ea's lemma $\bP$-almost surely by
	\begin{align*}
	||u_{N,\eps}(\go,\cdot)-u_{N,\eps,\ell}(\go,\cdot)||_V
	&\le (1+C_{|\D|}) ||\nabla(u_{N,\eps}(\go,\cdot)-u_{N,\eps,\ell}(\go,\cdot))||_{L^2(\D)}\\
	&\le C_{\vartheta,\D}\frac{a_+(\go)}{a_-(\go)}||u_{N,\eps}(\go,\cdot)||_{H^{1+\gk_{a}}(\D)} h_\ell^{\min(\gk_{a},1)},
	\end{align*}
	where $C_{|\D|}$ is the Poincar\'e constant and $ C_{\vartheta,\D}>0$ is deterministic and only depends on the indicated parameters (see e.g. \cite[Chapter 8.3/8.5]{H10}).
	If $\cK$ is sample-independent (and thus $(h_\ell,\ell\in\N_0)$ is fixed for any $\go$), $a_+/a_-\in L^2(\gO;\R)$ and there exists a uniform bound $||u_{N,\eps}||_ {L^2(\gO;H^{1+\gk_a}(\D))}\le C_u$ in $N$ and $\eps$, we readily obtain
	\bee
	\|u_{N,\eps}-u_{N,\eps,\ell}\|_{L^2(\gO;V)}\le C_{\vartheta,\D}\E\left(\frac{a_+^2}{a_-^2}\right)^{1/2}C_u h_\ell^{\min(\gk_{a},1)}
	\eee
	by H\"older's inequality.
	We note that a uniform a-priori bound may require higher moments of $1/a_{N,\eps}$, $f$ and $g$ or even essential bounds on $a_{N,\eps}$ which are not ensured by Assumption~\ref{ass:EV}.
\end{ex}

\begin{rem}
	For jump-diffusion problems, we obtain, in general, a discretization error of order $\gk_{a}\in (1/2,1)$. We cannot expect the pathwise "full" order of convergence $\gk_{a}=1$ of the finite-dimensional discretization error, since the diffusion coefficient $a_{N,\eps}$ is almost surely discontinuous. 
	Most results that ensure  $H^2(\D)$-regularity of the pathwise weak solution $u_{N,\eps}(\go,\cdot)$ need that $a_{N,\eps}(\go,\cdot)$ is continuously differentiable or that $a_{N,\eps}(\go,\cdot)\in W^{1,\infty}(\D)$, see for instance \cite{E10}. 
	The latter would imply that $a_{N,\eps}(\go,\cdot)$ is continuous by the Sobolev embedding theorem, which contradicts our setting. 
	As we consider pathwise regularity, we can rely on results for deterministic elliptic problems with discontinuous coefficients.
	We refer to~\cite{P02} and the references therein, where the author emphasizes that the regularity of the solution depends on the shape and magnitude of the discontinuities. 
	For several examples $H^{1+\gk_a}$-regularity with $\gk_{a}<1$ is shown.
	In \cite{B70} the author states that without special treatment of the interfaces with respect to the triangulation one, in general, cannot expect a better pathwise convergence rate than $\gk_{a}=1/2$.
\end{rem}

\subsection{Adaptive triangulations}

In view of the previous remark, we aim to increase the order of convergence $\gk_{a}$ with respect to $h_\ell$.
For this, we employ path-dependent triangulations to match the interfaces generated by the samples of the jump-diffusion coefficient.
Hence, we need to reformulate the discrete problem with respect to $\go$, since the triangulation and matching basis functions may be sample-dependent. 
Given a fixed $\go$ and $\ell$, we consider a finite-dimensional subspace $\widehat V_\ell(\go)\subset V$ with sample-dependent basis $\{\widehat v_1(\go),\dots,\widehat v_{\widehat d_\ell}(\go)\}$ and stochastic dimension $\widehat d_\ell(\go)\in\N$. 
As before, we denote by $(\widehat h_\ell(\go),\ell\in\N_0)$ the sequence of (random) refinement parameters corresponding to the sequence of subspaces $(\widehat V_\ell(\go),\ell\in\N_0)$.
To be more precise, for a given random partition $\cT(\go)=(\cT_i,i=1\dots,\tau(\go))$ of $\D$, we choose a triangulation $\cK_{\ell}(\go)$ of $\D$ with respect to $\cT(\go)$ such that 
\bee
\cT(\go)\subset\cK_{\ell}(\go)\quad\text{and}\quad \widehat h_{\ell}(\go) :=\max_{K\in\cK_{\ell}(\go)}\text{diam}(K)\le \overline h_\ell\quad\text{for $\ell\in\N_0$,}
\eee
where $(\overline h_\ell,\ell\in\N_0)$ is a positive sequence of deterministic refinement thresholds, decreasing monotonically to zero. This guarantees that $\widehat h_\ell(\go)\to0$ almost surely, although the absolute speed of convergence may vary for each $\go$. Denoting by $\widehat\vartheta_\ell(\go)$ the minimal interior angle within $\cK_\ell(\go)$, we assume similarly to Example~\ref{ex:fem} that there exists a $\vartheta>0$ such that for $\bP$-almost all $\go$  
\be\label{eq:angle}
\inf_{\ell\in\N_0}\widehat \vartheta_\ell(\go)\ge\vartheta>0.
\ee
The  pathwise discrete variational problem in the sample-adaptive subspace $\widehat V_\ell(\go)$ now reads: 
Find $\widehat u_{N,\eps,\ell}(\go,\cdot)\in \widehat V_\ell(\go)$ such that 
\be\label{eq:fem_adaptive}
B_{a_{N,\eps}(\go)}(\widehat u_{N,\eps,\ell}(\go,\cdot),\widehat v_\ell(\go))=F_\go(\widehat v_\ell(\go))\quad\text{for all $\widehat v_\ell(\go)\in \widehat V_\ell(\go)\subset V$.}
\ee
Since the triangulation is aligned with the discontinuities of $a_{N,\eps}$, this approximation admits an increase of the (pathwise) order of convergence $\gk_{a}$ compared to the non-adaptive, deterministic Galerkin approximations. 
For piecewise linear basis functions $\widehat v_i$, 
we can expect
\be
\begin{split}\label{eq:afem1}
||u_{N,\eps}(\go,\cdot)-\widehat u_{N,\eps,\ell}(\go,\cdot)||_V
&\le C_{\vartheta,\D}\frac{a_+(\go)}{a_-(\go)}\Big(||u_{N,\eps}(\go,\cdot)||_V\\
&\qquad\qquad+\sum_{i=1}^{\tau(\go)}||\nabla\cdot\nabla u_{N,\eps}(\go,\cdot)||_{L^2(\cT_i(\go))}\Big) \widehat h_\ell(\go) \\
&=:C_{\vartheta,\D}\widehat C_u(\go) \widehat h_\ell(\go)
\end{split}
\ee
$\bP$-a.s. by results from domain decomposition methods for deterministic elliptic problems (e.g. \cite{B99,BMR05}).
We emphasize that with the assumption of Eq.~\eqref{eq:angle} the constant $C_{\vartheta,\D}>0$ is independent of $\go$.
This estimate holds provided the random partitions are polygonal and $u_{N,\eps}(\go,\cdot)$ is almost surely piecewise in $H^2(\cT_i)$. 
In this case, the adaptive triangulations perfectly fit the random subdomains in $\D$ and the order of convergence is the same as if $u_{N,\eps}(\go,\cdot)\in H^2(\D)$.
If the random partitions are not polygonal but form $C^2$-interfaces within $\D$, we obtain log-linear rates:
\begin{align}\label{eq:afem2}
||u_{N,\eps}(\go,\cdot)-\widehat u_{N,\eps,\ell}(\go,\cdot)||_V
\le  C_{\vartheta,\D}\widehat C_u(\go) \widehat h_\ell(\go)|\log(\widehat h_\ell(\go))|^{1/2} 
\end{align}
with $\widehat C_u(\go)$ as in Ineq.~\eqref{eq:afem1}, see \cite{CZ98}. 

\begin{rem}
	Based on Ass.~\ref{ass:fd} and the pathwise estimates in Eqs.~\eqref{eq:afem1}, \eqref{eq:afem2}, the question arises whether it is possible to derive estimates in a mean-square sense 
	\bee
	\|u_{N,\eps}-\widehat u_{N,\eps,\ell}\|_{L^2(\gO;V)}\le \widetilde C_u \E(\widehat h_\ell^{2})^{1/2},
	\eee
	where the constant $\widetilde C_u$ is independent of $N$ and $\eps$. 	
	As the independence of $\eps$ is not an issue, a uniform estimate with respect to $N$ requires further summability conditions (i.e. $\gb\ge2\alpha$) on the eigenvalues in Ass.~\ref{ass:EV} and would result in a piecewise differentiable diffusion coefficient $a$. 
	To illustrate this, we consider the identity 
	\bee
	-\nabla\cdot(a_{N,\eps}(\go,\cdot)\nabla u_{N,\eps}(\go,\cdot))=-\nabla a_{N,\eps}(\go,\cdot)\cdot\nabla u_{N,\eps}(\go,\cdot) -a_{N,\eps}(\go,\cdot) \nabla\cdot\nabla u_{N,\eps}(\go,\cdot)=f(\go,\cdot)
	\eee
	on some partition element $\cT_i(\go)$. Rearranging terms and taking expectations then yields by H\"older's inequality
	\bee
	\E\big(\sum_{i=1}^{\tau(\go)}||\nabla\cdot\nabla u_{N,\eps}(\go,\cdot)||_{L^2(\cT_i(\go))}\big)\le\E(\tau)^{1/2}\E\Big(\frac{2||f||^2_H+2||\nabla (\ol a +\Phi(W_N))||_{L^\infty(\D)}^2||\nabla u_{N,\eps}||^2_{H}}{a_{N,\eps,-}^2}\Big).
	\eee
	If $f$ and $1/a_-$ have sufficiently high moments and the eigenvalues $\eta_i$ decay fast enough, the right hand side may be bounded uniformly with respect to $N$, then it is straightforward to derive the corresponding $L^2(\gO;V)$-estimates. These assumptions, however, exclude the important cases where the covariance operator of $W$ is of Brownian-motion-type or exponential.
	Practically, as we show in Section~\ref{sec:num}, the pathwise adaptive convergence rates may also be recovered for the $L^2(\gO,V)$-error if $a$ has only piecewise H\"older-continuous trajectories. 
\end{rem}
In view of the preceding remarks and Example~\ref{ex:fem}, we make the following assumption on the mean-square discretization error: 
\begin{assumption}\label{ass:fd}
	There exist constants $C_{u,a},C_{u,n},\gk_a,\gk_n>0$, such that for any $N,\ell\in\N_0$ and $\eps>0$, the finite-dimensional approximation errors of $u_{N,\eps}$ in the subspaces $\widehat V_\ell$ resp. $V_\ell$ are bounded by  
	\bee
	\|u_{N,\eps}-\widehat u_{N,\eps,\ell}\|_{L^2(\gO;V)}\le C_{u,a}\E(\widehat h_\ell^{2\gk_a})^{1/2} \;\text{ resp. }
	\eee
	\bee
	\|u_{N,\eps}-u_{N,\eps,\ell}\|_{L^2(\gO;V)}\le C_{u,n} h_\ell^{\gk_n}.
	\eee
	The constants $C_{u,a},C_{u,n}$ may depend on $a,f$ and $g$, but are independent of $\widehat h_\ell,h_\ell,\gk_a$ and $\gk_n$.
\end{assumption}
Note that in general $1\ge\gk_a>\gk_n>0$ by the previous observations. We consider $\E(\widehat h_\ell^{2\gk_a})^{1/2}$ instead of $\ol h_\ell^{\gk_a}$ as both quantities  depend to a great extend on the geometry introduced by $a$. The parameter $\ol h_\ell$ has been merely introduced to ensure  that $\lim_{\ell\to\infty}\E(\widehat h_\ell^2)=0$.

\section{Estimation of expectations by Monte Carlo methods}\label{sec:mlmc}
As we are able to generate samples from $u_{N,\eps}$ and control for the discretization error in each sample,  
we may estimate the expected value $\E(u)$ of the weak solution to Eq.~\eqref{eq:elliptic}.
We focus on \MLMC\, estimators as they are easily implemented and do not require much regularity of $u$.
Monte Carlo estimators introduce an additional statistical bias besides the error contributions of $a_{N,\eps}$ and the pathwise discretization error from Section~\ref{sec:fem}. 
However, this error can be controlled under natural assumptions and it may be equilibrated according to the other error terms.   
In this section, we first recall briefly standard \MC\, and \MLMC\, methods to estimate $\E(u)$ and then control for the mean-squared error in both algorithms.
We also suggest a modification of the \MLMC\, estimation to increase computational efficiency before we verify our results on several numerical examples in Section~\ref{sec:num}.

\subsection{\MC\, and \MLMC\, estimators}
Consider a sequence $(u^{(i)}, i\in\N)$ of i.i.d. copies of the $V$-valued random variable $u$.
For $M\in\N$ independent samples, the \textit{\MC\,estimator} of $\E(u)$ is defined as
\bee
E_M(u):=\frac{1}{M}\sum_{i=1}^Mu^{(i)}.
\eee
Since we are only able to draw samples of the approximated discrete solution $\widehat u_{N,\eps,\ell}$ as introduced in Section~\ref{sec:fem}, we aim to control the \textit{root mean-squared error} (RMSE)
$||\E(u)-E_M(\widehat u_{N,\eps,\ell})||_{L^2(\gO;V)}$.
For notational convenience, we focus only on the adaptive discretization $\widehat u_{N,\eps,\ell}$ with mean-square refinement $\E(\widehat h_\ell^2)^{1/2}$ and converge rate $\gk_a$ in this section. However, all results also hold in the non-adaptive case where we replace $\E(\widehat h_\ell^2)^{1/2}$ by $h_\ell$ and $\gk_a$ by $\gk_n$.
To derive a bound for the RMSE, we need the following standard result.
\begin{lem}\label{lem:mc}
	Let $M\in\N$ and $u\in L^2(\gO;V)$. Then
	\bee
	||\E(u)-E_M(u)||_{L^2(\gO;V)}\le\frac{||u||_{L^2(\gO;V)}}{\sqrt M}\quad\text{and}\quad||E_M(u)||_{L^2(\gO;V)}\le||u||_{L^2(\gO;V)}.
	\eee
\end{lem}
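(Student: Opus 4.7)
The plan is to exploit that $V$, being a closed subspace of the Hilbert space $H^1(\cD)$, is itself a Hilbert space, so that the standard $\mathrm{Var}(\overline X)=\mathrm{Var}(X)/M$ identity for i.i.d. sample means extends to the Bochner setting via the inner product on $V$.

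For the first inequality, I would begin by writing
\begin{equation*}
\E(u)-E_M(u)=\frac{1}{M}\sum_{i=1}^M\bigl(\E(u)-u^{(i)}\bigr)
\end{equation*}
and then compute the squared $L^2(\gO;V)$-norm by expanding the inner product in $V$:
\begin{equation*}
\|\E(u)-E_M(u)\|_{L^2(\gO;V)}^2=\frac{1}{M^2}\sum_{i,j=1}^M\E\bigl\langle\E(u)-u^{(i)},\E(u)-u^{(j)}\bigr\rangle_V.
\end{equation*}
The cross terms ($i\ne j$) vanish: the $u^{(i)}$ are independent and the random variables $\E(u)-u^{(i)}$ have vanishing Bochner mean, so by Fubini together with independence each off-diagonal expectation reduces to $\langle \E(\E(u)-u^{(i)}),\E(\E(u)-u^{(j)})\rangle_V=0$. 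This collapses the sum to the diagonal, yielding $\frac{1}{M}\E\|\E(u)-u\|_V^2$. Finally I would apply the Bochner version of the bias--variance identity $\E\|u-\E(u)\|_V^2=\E\|u\|_V^2-\|\E(u)\|_V^2\le\|u\|_{L^2(\gO;V)}^2$ and take square roots.

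For the second inequality, the cleanest route is Minkowski's inequality in $L^2(\gO;V)$:
\begin{equation*}
\|E_M(u)\|_{L^2(\gO;V)}=\Bigl\|\frac{1}{M}\sum_{i=1}^M u^{(i)}\Bigr\|_{L^2(\gO;V)}\le\frac{1}{M}\sum_{i=1}^M\|u^{(i)}\|_{L^2(\gO;V)}=\|u\|_{L^2(\gO;V)},
\end{equation*}
using that each $u^{(i)}$ is identically distributed to $u$.

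I do not expect any serious obstacles: the only subtlety is justifying the vanishing of the cross terms in the Hilbert-space expansion, which rests on the independence of the $u^{(i)}$ together with the fact that $\E$ commutes with bounded linear functionals, in particular with the continuous linear map $v\mapsto\langle v,w\rangle_V$ for fixed $w\in V$. Everything else is a direct transcription of elementary Monte Carlo identities to the Bochner setting.
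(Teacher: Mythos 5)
Your proof is correct. The paper states this lemma without proof, labelling it a ``standard result,'' and your argument is precisely the standard one it implicitly relies on: the sample-mean variance identity carried over to the Bochner--Hilbert setting (using that $V$, as a closed subspace of $H^1(\cD)$, is a Hilbert space, so the cross terms vanish by independence and zero mean), followed by the triangle inequality in $L^2(\gO;V)$ for the second bound. The one subtlety you flag --- that $\E\langle X,Y\rangle_V=\langle\E X,\E Y\rangle_V$ for independent Bochner-integrable $X,Y$ --- is handled adequately by your appeal to Fubini on the product measure together with the commutation of $\E$ with the continuous functionals $v\mapsto\langle v,w\rangle_V$.
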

\begin{thm}\label{thm:mc}
	Let Assumptions~\ref{ass:EV} and ~\ref{ass:fd} hold such that $t:=(2/p+1/q+1/s)^{-1}\ge2$, where $p\in[1, (\eta^*)^{-1})$ for $\eta^*$ as in Lemma~\ref{lem:a}.
	Then, for any $M,N\in\N$ and $\eps>0$, 
	\bee
	||\E(u)-E_M(\widehat u_{N,\eps,\ell})||_{L^2(\gO;V)}\le C_{MC}\left(\frac{1}{\sqrt M}+\Xi_N^{1/2}+\eps^{1/s}+\E(\widehat h_\ell^{2\gk_a})^{1/2}\right),
	\eee
	where $C_{MC}>0$ is a constant independent of $M,\Xi_N,\eps$ and $\widehat h_\ell$. 
\end{thm}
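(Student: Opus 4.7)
The plan is to perform a standard triangle inequality split into three error contributions and then bound each using results already established in the paper. Specifically, I would write
\begin{align*}
\E(u)-E_M(\widehat u_{N,\eps,\ell})
&= \bigl(\E(u)-\E(u_{N,\eps})\bigr) + \bigl(\E(u_{N,\eps})-\E(\widehat u_{N,\eps,\ell})\bigr) \\
&\qquad+ \bigl(\E(\widehat u_{N,\eps,\ell})-E_M(\widehat u_{N,\eps,\ell})\bigr),
\end{align*}
and then bound the $L^2(\gO;V)$-norm of each summand separately via the triangle inequality.

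For the first (coefficient-approximation) term, I would apply Jensen's inequality in the form $\|\E(u)-\E(u_{N,\eps})\|_V \le \|u-u_{N,\eps}\|_{L^2(\gO;V)}$, which is admissible since the hypothesis $t\ge 2$ ensures $u-u_{N,\eps}\in L^2(\gO;V)$. Theorem~\ref{thm:u_error} gives a bound on this difference in $L^t(\gO;V)$, and since $t\ge 2$ this dominates the $L^2$-norm; Theorem~\ref{thm:a_error} then yields a $(\Xi_N^{1/2}+\eps^{1/s})$-bound, and the factors involving $\|1/a_{N,\eps,-}\|_{L^p(\gO;\R)}$, $\|f\|_{L^q(\gO;H)}$ and $\|g\|_{L^q(\gO;L^2(\gG_2))}$ are all finite and uniform in $N,\eps$ by Lemma~\ref{lem:a_n} and Assumption~\ref{ass:EV}. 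For the second (discretization) term, I would again invoke Jensen to pass to $L^2(\gO;V)$ and then apply Assumption~\ref{ass:fd} directly to obtain the $C_{u,a}\E(\widehat h_\ell^{2\gk_a})^{1/2}$-bound.

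For the statistical term, Lemma~\ref{lem:mc} applied to the (zero-mean) random variable $\widehat u_{N,\eps,\ell}-\E(\widehat u_{N,\eps,\ell})$ yields
\begin{equation*}
\|\E(\widehat u_{N,\eps,\ell})-E_M(\widehat u_{N,\eps,\ell})\|_{L^2(\gO;V)}\le \frac{\|\widehat u_{N,\eps,\ell}\|_{L^2(\gO;V)}}{\sqrt{M}}.
\end{equation*}
Here the key subtlety is that $\|\widehat u_{N,\eps,\ell}\|_{L^2(\gO;V)}$ must be bounded uniformly in $N,\eps,\ell$ for the constant $C_{MC}$ to be independent of these parameters. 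I would obtain this by a further triangle inequality $\|\widehat u_{N,\eps,\ell}\|_{L^2(\gO;V)}\le \|u_{N,\eps}-\widehat u_{N,\eps,\ell}\|_{L^2(\gO;V)} + \|u_{N,\eps}\|_{L^2(\gO;V)}$: the first term is uniformly bounded (in fact tends to zero) by Assumption~\ref{ass:fd}, and the second is uniform in $N,\eps$ by Lemma~\ref{lem:a_n} combined with the embedding $L^r(\gO;V)\hookrightarrow L^2(\gO;V)$ for $r\ge 2$.

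The main (essentially only) obstacle is verifying this last uniform-in-$(N,\eps)$ bound on $\|u_{N,\eps}\|_{L^2(\gO;V)}$, which amounts to checking that the exponent $r=(1/q+\eta^*)^{-1}$ in Lemma~\ref{lem:a_n} satisfies $r\ge 2$ under the hypothesis $t\ge 2$. Since $t^{-1}=2/p+1/q+1/s\le 1/2$ and $p<(\eta^*)^{-1}$ implies $2/p>2\eta^*$, one obtains $1/q+\eta^*<1/q+2/p\le 1/2-1/s<1/2$, so $r>2$ as required. Once this verification is in place, collecting the three bounds and defining $C_{MC}$ to be the maximum of the finite constants produced by Theorems~\ref{thm:u_error}, \ref{thm:a_error}, Assumption~\ref{ass:fd}, and the uniform bound just discussed completes the proof.
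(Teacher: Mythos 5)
Your proof is correct, and the overall strategy -- a triangle-inequality split into coefficient-approximation, discretization, and statistical contributions, each controlled by Theorems~\ref{thm:u_error} and~\ref{thm:a_error}, Assumption~\ref{ass:fd}, and Lemma~\ref{lem:mc} respectively -- matches the paper's. The decomposition differs in one technical respect: the paper writes $\E(u)-E_M(\widehat u_{N,\eps,\ell})=(\E(u)-E_M(u))+E_M(u-u_{N,\eps})+E_M(u_{N,\eps}-\widehat u_{N,\eps,\ell})$, so its Monte Carlo fluctuation term involves the exact solution $u$, and the two bias terms are pushed inside $E_M$ and handled with the stability estimate $\|E_M(v)\|_{L^2(\gO;V)}\le\|v\|_{L^2(\gO;V)}$ from the second part of Lemma~\ref{lem:mc}; you instead place the statistical error on the simulated quantity $\widehat u_{N,\eps,\ell}$ and handle the bias terms with Jensen's inequality. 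The practical consequence is that the paper's variance term only needs the single fixed bound $\|u\|_{L^2(\gO;V)}<\infty$ (which holds since, as you verify, the integrability exponent exceeds $2$ under the hypothesis $t\ge2$), whereas your variance term needs $\|\widehat u_{N,\eps,\ell}\|_{L^2(\gO;V)}$ bounded uniformly in $N$, $\eps$ and $\ell$ -- an extra uniformity statement, which you correctly identify as the only nontrivial point and correctly supply via Lemma~\ref{lem:a_n}, Theorem~\ref{thm:pw_u} and Assumption~\ref{ass:fd}. Both routes give the stated bound with $C_{MC}$ independent of $M$, $\Xi_N$, $\eps$ and $\widehat h_\ell$.
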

\begin{proof}
	We apply the triangle inequality to obtain
	\begin{align*}  
	||\E(u)-E_M(\widehat u_{N,\eps,\ell})||_{L^2(\gO;V)}&\le ||\E(u)-E_M(u)||_{L^2(\gO;V)} +||E_M(u-u_{N,\eps})||_{L^2(\gO;V)}\\&\quad+ ||E_M(u_{N,\eps}-\widehat u_{N,\eps,\ell})||_{L^2(\gO;V)}.
	\end{align*}
	By the first part of Lemma~\ref{lem:mc} and Theorem~\ref{thm:pw_u}, we bound the first term by
	\begin{align*}
	||\E(u)-E_M(u)||_{L^2(\gO;V)}\le C(a_-,\D,p)(||f||_{L^q(\gO;H)}+||g||_{L^q(\gO;\gG_2)})\frac{1}{\sqrt M}.
	\end{align*}
	The second part of Lemma~\ref{lem:mc} yields with the estimate of Theorem~\ref{thm:u_error} 
	\begin{align*}
	||E_M(u-u_{N,\eps})||_{L^2(\gO;V)}&\le\widetilde C(a, f,g,\D)||a^{-1}_{N,\eps}||_{L^p(\gO;\R)}||a-a_{N,\eps}||_{L^s(\gO;L^\infty(\D))}\\
	&\le C(a,f,g,\D)(\Xi_N^{1/2}+\eps^{1/s})
	\end{align*}
	where $\widetilde C(a,f,g,\D), C(a,f,g,\D)>0$ are independent of $N$ and $\eps$, because $||1/a_{N,\eps,-}||_{L^p(\gO;R)}$ is bounded uniformly with respect to these parameters by Lemma~\ref{lem:a_n}.
	Finally, Assumption~\ref{ass:fd} and Lemma~\ref{lem:mc} yield for the third term 
	\bee
	||E_M(u_{N,\eps}-\widehat u_{N,\eps,\ell})||_{L^2(\gO;V)}\le C_{u,a} \E(\widehat h_\ell^{2\gk_a})^{1/2}
	\eee
	where $C_{u,a}$ is also independent of $N$ and $\eps$ by assumption.
	
\end{proof}
\begin{rem}\label{rem:mc}
	The estimate in Theorem~\ref{thm:mc} suggests that all four error contributions should be equilibrated to obtain a RMSE of order $\E(\widehat h_\ell^{2\gk_a})^{1/2}$.
	For this, we may choose $M,N$ and $\eps$ such that
	\bee
	M^{-1}\simeq \Xi_N \simeq\eps^{2/s}\simeq \E(\widehat h_\ell^{2\gk_a}).
	\eee  
	While this is straightforward for $M$ and usually also for $\eps$, the choice of the truncation index $N$ involves a few difficulties. 
	In general, the eigenvalues $(\eta_i, i\in\N)$ of the covariance operator $Q:H\to H$ will not be available in closed form.
	The decay parameter $\gb>0$ may be unknown and only be bounded from below, which would result in an overestimation of the term $\Xi_N$.
	One possibility to find $N$ is applicable if $Q$ is an integral operator of the form
	\bee
	(Q\varphi)[x]=v\int_\D k_Q(x,y)\varphi(y)dy,\quad \varphi\in H,
	\eee
	with some nonnegative, symmetric and bounded kernel function $k_Q:\D^2\to\R$ and $v>0$.
	In this case, the eigenvalues of $Q$ fulfill the identity $v\int_Ddx=\sum_{i\in\N}\eta_i$ (see \cite{E09, LZ04}).
	Operators with this property are widely used in practice and include for instance Mat\'ern class, $\gg$-exponential and rational quadratic covariance functions (see \cite{RW06}).
	The first eigenvalues of $Q$ have to be, in any case, determined (numerically) to approximate the Gaussian field $W$, so we select $N$ such that 
	\bee
	\Xi_N=v\int_\D dx-\sum_{i=1}^N\eta_i\stackrel{!}{\simeq} \E(\widehat h_\ell^{2\gk_a}).
	\eee
\end{rem}

In most cases, the sampling of $a_{N,\eps}$ and $u_{N,\eps, \ell}$ for given boundary data will be computationally expensive:
If the eigenvalues of $Q$ decay slowly, it is necessary to include a large number of terms in the \KL expansion to achieve $\Xi_N\simeq \E(\widehat h_\ell^{2\gk_a})$.
In addition, sampling of the sequence $(\widetilde P_i, i\in\N)$ might also be time-consuming if a small error $\E(|\widetilde P_i-P_i|^s)\le\eps$ is desired.
Given a sample of $a_{N,\eps}$, one has then to rely on numerical integration schemes to calculate the entries of the stiffness matrix $\mathbf A(\go)$ and the load vector $\mathbf F(\go)$, and solve a possibly large system of linear equations.
This motivates the use of advanced Monte Carlo techniques, such as \MLMC, to achieve essentially the same accuracy with reduced computational effort. 
We briefly recall the idea of the \MLMC\, sampling in the following. 

For $L\in\N$ we consider finite-dimensional subspaces $\widehat V_0\subset\dots\subset \widehat V_L$ of $V$ with refinement sizes $\widehat h_0>\dots>\widehat h_L>0$ and approximation parameters $N_0<\dots<N_L$ and $\eps_0>\dots>\eps_\ell$. 
We define $u_{N_{-1},\eps_{-1},-1}:=0$ and expand the ``finest level approximation'' $\widehat u_{N_L,\eps_L,L}$ into a telescopic sum to obtain 
\bee
\E(\widehat u_{N_L,\eps_L,L})=\sum_{\ell=0}^L\E(\widehat u_{N_\ell,\eps_\ell,\ell}-\widehat u_{N_{\ell-1},\eps_{\ell-1},{\ell-1}}).
\eee
Instead of estimating the left hand side by the ordinary Monte Carlo method, we estimate the expected corrections $\E(\widehat u_{N_\ell,\eps_\ell,\ell}-\widehat u_{N_{\ell-1},\eps_{\ell-1},{\ell-1}})$
by generating $M_\ell$ independent realizations $\widehat u^{(i,\ell)}_{N_\ell,\eps_\ell,\ell}-\widehat u^{(i,\ell)}_{N_{\ell-1},\eps_{\ell-1},{\ell-1}}$ on each level and calculating the Monte Carlo estimator $E_{M_\ell}(\widehat u_{N_\ell,\eps_\ell,\ell}-\widehat u_{N_{\ell-1},\eps_{\ell-1},{\ell-1}})$.
The \textit{\MLMC\ estimator} of $u_{N_L,\eps_L,L}$ is then defined as
\begin{equation}\label{eq:mlmc}
E^L(\widehat u_{N_L,\eps_L,L}):=\sum_{\ell=0}^LE_{M_\ell}(\widehat u_{N_\ell,\eps_\ell,\ell}-\widehat u_{N_{\ell-1},\eps_{\ell-1},{\ell-1}})=\sum_{\ell=0}^L\frac{1}{M_\ell}\sum_{i=1}^{M_\ell}\widehat u^{(i,\ell)}_{N_\ell,\eps_\ell,\ell}-\widehat u^{(i,\ell)}_{N_{\ell-1},\eps_{\ell-1},{\ell-1}}
\end{equation}
To achieve a desired target RMSE of $\epsilon_{RMSE}>0$, this estimator requires less computational effort than the standard \MC\, approach under certain assumptions.
This, by now, classical result was proven in \cite[Theorem 3.1]{G08}.
The proof is rather general and may readily be transferred to the problem of estimating moments of random PDEs (see~\cite{BSZ11}).
\begin{thm}\label{thm:mlmc}
	Let Assumptions~\ref{ass:EV} and ~\ref{ass:fd} hold such that $t:=(2/p+1/q+1/s)^{-1}\ge2$, where $p\in[1, (\eta^*)^{-1})$ for $\eta^*$ as in Lemma~\ref{lem:a}.
	For $L\in\N$, let $\widehat h_\ell>0$, $M_\ell,N_\ell\in\N$ and $\eps_\ell>0$ be the level dependent approximation parameters for any $\ell=0,\dots,L$ such that $\widehat h_\ell, \eps_\ell$ are decreasing and $N_\ell$ is increasing with respect to $\ell$.
	Then the \MLMC\ estimator admits the bound
	\begin{align*}
	||\E(u)-E^L(\widehat u_{N_L,\eps_L,L})||_{L^2(\gO;V)}\le &C\Big(\Xi_{N_L}^{1/2}+\eps_L^{1/s}+\E(\widehat h_L^{2\gk_a})^{1/2}+\frac{1}{\sqrt{M_0}}\\
	&\quad+\sum_{\ell=0}^{L-1}\frac{\Xi_{N_\ell}^{1/2}+\eps^{1/s}_\ell+\E(\widehat h_\ell^{2\gk_a})^{1/2}}{\sqrt{ M_{\ell+1}}}\Big),
	\end{align*}
	where $C>0$ is a constant independent of $L$ and the level-dependent approximation parameters.
\end{thm}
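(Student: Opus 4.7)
The proof proceeds by splitting the error in $L^2(\Omega;V)$ into a deterministic bias contribution (coming from the finest-level approximation $\widehat u_{N_L,\eps_L,L}$) and a statistical contribution (the Monte Carlo deviation of the telescoping sum from its expectation):
\begin{align*}
\|\E(u)-E^L(\widehat u_{N_L,\eps_L,L})\|_{L^2(\gO;V)}
\le
\|\E(u)-\E(\widehat u_{N_L,\eps_L,L})\|_{L^2(\gO;V)}
+\|\E(\widehat u_{N_L,\eps_L,L})-E^L(\widehat u_{N_L,\eps_L,L})\|_{L^2(\gO;V)}.
\end{align*}
The first (bias) term is handled by inserting $u_{N_L,\eps_L}$, applying the triangle inequality, and using Theorem~\ref{thm:u_error} together with Assumption~\ref{ass:fd} to obtain a bound of the form $C(\Xi_{N_L}^{1/2}+\eps_L^{1/s}+\E(\widehat h_L^{2\gk_a})^{1/2})$, exactly the three finest-level terms appearing in the claim.

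For the statistical term, I denote $Y_\ell:=\widehat u_{N_\ell,\eps_\ell,\ell}-\widehat u_{N_{\ell-1},\eps_{\ell-1},\ell-1}$ (with the convention $\widehat u_{N_{-1},\eps_{-1},-1}=0$). By construction, the samples used on different levels are independent, so the bias-corrected estimator decomposes into independent sums and the $L^2(\gO;V)$-errors are orthogonal:
\begin{equation*}
\|\E(\widehat u_{N_L,\eps_L,L})-E^L(\widehat u_{N_L,\eps_L,L})\|_{L^2(\gO;V)}^2
=\sum_{\ell=0}^L \|\E(Y_\ell)-E_{M_\ell}(Y_\ell)\|_{L^2(\gO;V)}^2.
\end{equation*}
Lemma~\ref{lem:mc} applied levelwise gives $\|\E(Y_\ell)-E_{M_\ell}(Y_\ell)\|_{L^2(\gO;V)}\le\|Y_\ell\|_{L^2(\gO;V)}/\sqrt{M_\ell}$. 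Finally, using subadditivity of the square root $(\sqrt{\sum a_i^2}\le\sum|a_i|)$, this step turns the sum of squares into the single sum appearing in the statement.

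It remains to bound $\|Y_\ell\|_{L^2(\gO;V)}$. For $\ell=0$, insert $u$ and use Theorem~\ref{thm:pw_u} together with Theorem~\ref{thm:u_error} and Assumption~\ref{ass:fd} to obtain a constant bound independent of the approximation parameters; this yields the $1/\sqrt{M_0}$ term. For $\ell\ge 1$, insert $u$ twice:
\begin{equation*}
\|Y_\ell\|_{L^2(\gO;V)}
\le \|u-\widehat u_{N_\ell,\eps_\ell,\ell}\|_{L^2(\gO;V)}
 +\|u-\widehat u_{N_{\ell-1},\eps_{\ell-1},\ell-1}\|_{L^2(\gO;V)},
\end{equation*}
and again bound each term by the sum $\Xi_N^{1/2}+\eps^{1/s}+\E(\widehat h^{2\gk_a})^{1/2}$ on the corresponding level, using Theorem~\ref{thm:u_error} and Assumption~\ref{ass:fd}. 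Since $\widehat h_\ell,\eps_\ell$ decrease and $N_\ell$ increases in $\ell$, the coarser index $\ell-1$ dominates, giving the term $(\Xi_{N_\ell}^{1/2}+\eps_\ell^{1/s}+\E(\widehat h_\ell^{2\gk_a})^{1/2})/\sqrt{M_{\ell+1}}$ after re-indexing $\ell\mapsto\ell+1$. Collecting the bias and statistical contributions delivers the stated inequality with a constant $C$ independent of $L$ and the level parameters (but depending on $a,f,g,\D,p,q,s$).

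The only mildly delicate step is the \textbf{independence and orthogonality argument} at the level of the $V$-valued Bochner norm: one must justify that $\|\E(Y_\ell)-E_{M_\ell}(Y_\ell)\|_{L^2(\gO;V)}^2$ summed across levels equals $\|\sum_\ell(\E(Y_\ell)-E_{M_\ell}(Y_\ell))\|_{L^2(\gO;V)}^2$. This follows from independence of the samples across levels and the fact that independent, centered $V$-valued square-integrable random variables have vanishing cross-covariance in the Bochner norm, which is standard but must be invoked carefully since $V$ is infinite-dimensional. Once this identity is secured, the rest of the argument is an accounting exercise combining Theorems~\ref{thm:u_error}, \ref{thm:pw_u}, Lemma~\ref{lem:mc} and Assumption~\ref{ass:fd}.
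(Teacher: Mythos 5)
Your proof is correct and follows the paper's argument almost step for step: the same bias/statistical splitting, the same use of Theorem~\ref{thm:u_error} and Assumption~\ref{ass:fd} for the finest-level bias, the same levelwise application of Lemma~\ref{lem:mc}, and the same insertion of $u$ to bound each correction $Y_\ell$ with the coarser index dominating by monotonicity. The one place you diverge is the step you yourself flag as delicate: to pass from the estimator's deviation to a sum over levels, you invoke cross-level independence to get the orthogonality identity $\|\sum_\ell(\E(Y_\ell)-E_{M_\ell}(Y_\ell))\|_{L^2(\gO;V)}^2=\sum_\ell\|\E(Y_\ell)-E_{M_\ell}(Y_\ell)\|_{L^2(\gO;V)}^2$ and then apply $(\sum_\ell a_\ell^2)^{1/2}\le\sum_\ell a_\ell$. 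This is valid for the estimator $E^L$ as defined in Eq.~\eqref{eq:mlmc} (the samples do carry a level superscript and are independent across levels), but the composite of these two steps is exactly the triangle inequality, so you gain nothing in sharpness while adding a hypothesis. The paper simply applies the triangle inequality directly, and this is not an accident: it is remarked after the theorem that the bound in Theorem~\ref{thm:mlmc} does not require independence of the sampled differences across levels, which is precisely what lets the same estimate cover the bootstrap estimator $E^L_{BS}$, where samples are reused across levels and your orthogonality identity fails. So your argument proves the stated theorem, but the paper's more elementary route is strictly more general.
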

\begin{proof}
	Using the triangle inequality and Jensen's inequality for expectations, we observe that
	\begin{align*}
	||\E(u)&-E^L(\widehat u_{N_L,\eps_L,L})||_{L^2(\gO;V)}\\
	\le&||\E(u)-\E(\widehat u_{N_L,\eps_L,L})||_{L^2(\gO;V)}+||\E(\widehat u_{N_L,\eps_L,L})-E^L(\widehat u_{N_L,\eps_L,L})||_{L^2(\gO;V)}\\
	\le&\underbrace{||u-u_{N_L,\eps_L}||_{L^2(\gO;V)}+||u_{N_L,\eps_L}-\widehat u_{N_L,\eps_L,L}||_{L^2(\gO;V)}}_{:=I}+\underbrace{||\E(\widehat u_{N_L,\eps_L,L})-E^L(\widehat u_{N_L,\eps_L,L})||_{L^2(\gO;V)}}_{:=II}.
	\end{align*}
	Theorem~\ref{thm:u_error} and Assumption~\ref{ass:fd} give a bound for the first term by
	\bee
	I\le C(a,f,g,\D)(\Xi_{N_L}^{1/2}+\eps_L^{1/s})+C_{u,a}\E(\widehat h_L^{2\gk_a})^{1/2},
	\eee 
	where $C(a,f,g,\D)>0$ is an independent constant.
	For the second error term, the definition of $E^L$ in Eq.~\eqref{eq:mlmc} together with Lemma~\ref{lem:mc} yield
	\begin{align*}
	II&\le\sum_{\ell=0}^L||\E(\widehat u_{N_\ell,\eps_\ell,\ell}-\widehat u_{N_{\ell-1},\eps_{\ell-1},\ell-1})-E_{M_\ell}( \widehat u_{N_\ell,\eps_\ell,\ell}-\widehat u_{N_{\ell-1},\eps_{\ell-1},\ell-1})||_{L^2(\gO;V)}\\
	&\le\sum_{\ell=0}^L\frac{1}{\sqrt{M_\ell}}||\widehat u_{N_\ell,\eps_\ell,\ell}-\widehat u_{N_{\ell-1},\eps_{\ell-1},\ell-1}||_{L^2(\gO;V)}\\
	&\le\sum_{\ell=0}^L\Big(\frac{1}{\sqrt{M_\ell}}||\widehat u_{N_\ell,\eps_\ell,\ell}-u||_{L^2(\gO;V)}+\frac{1}{\sqrt{M_\ell}}||u-\widehat u_{N_{\ell-1},\eps_{\ell-1},\ell-1}||_{L^2(\gO;V)}\Big).
	\end{align*}
	Now, the terms $||\widehat u_{N_\ell,\eps_\ell,\ell}-u||_{L^2(\gO;V)}$ may be treated analogously to $I$:
	\bee
	||\widehat u_{N_\ell,\eps_\ell,\ell}-u||_{L^2(\gO;V)}\le C(a,f,g,\D)(\Xi_{N_\ell}^{1/2}+\eps_\ell^{1/s})+C_{u,a}\E(\widehat h_\ell^{2\gk_a})^{1/2}.
	\eee
	The remaining term $||u-\widehat u_{N_{-1},\eps_{-1},-1}||_{L^2(\gO;V)}$ is bounded by Theorem~\ref{thm:pw_u} via
	\bee
	||u-\widehat u_{N_{-1},\eps_{-1},-1}||_{L^2(\gO;V)}=||u||_{L^2(\gO;V)}\le C(a_-,\D,p)(||f||_{L^q(\gO;H)}+||g||_{L^q(\gO;\gG_2)}).
	\eee
	Finally, we arrive at the estimate
	\begin{align*}
	I+II&\le   C(a,f,g,\D)\Big(\Xi_{N_L}^{1/2}+\eps_L^{1/s}+\sum_{\ell=1}^L\frac{\Xi_{N_\ell}^{1/2}+\eps_\ell^{1/s}+\Xi_{N_\ell-1}^{1/2}+\eps_{\ell-1}^{1/s}}{\sqrt{M_\ell}}\Big)\\
	&\quad+C_{u,a}\Big(\E(\widehat h_L^{2\gk_a})^{1/2}+\sum_{\ell=1}^L\frac{\E(\widehat h_\ell^{2\gk_a})^{1/2}+\E(\widehat h_{\ell-1}^{2\gk_a})^{1/2}}{\sqrt{M_\ell}}\Big)+\frac{C(a_-,\D,p)(||f||_{L^q(\gO;H)}+||g||_{L^q(\gO;\gG_2)})}{\sqrt{M_0}}\\
	&\le C\Big(\Xi_{N_L}^{1/2}+\eps_L^{1/s}+\E(\widehat h_L^{2\gk_a})^{1/2}+
	\sum_{\ell=0}^{L-1}\frac{\Xi_{N_\ell}^{1/2}+\eps_\ell^{1/s}+\E(\widehat h_\ell^{2\gk_a})^{1/2}}{\sqrt{M_{\ell+1}}}+\frac{1}{\sqrt{M_0}}\Big).
	\end{align*}
	For the last inequality, we have used that $\Xi_{N_\ell}\le \Xi_{N_{\ell-1}}, \eps_\ell\le\eps_{\ell-1}$ and $\E(\widehat h_\ell^{2\gk_a})^{1/2}\le \E(\widehat h_{\ell-1}^{2\gk_a})^{1/2}$ with
	\bee
	C:=\max(2C(a,f,g,\D),2C_{u,a},C(a_-,\D,p)(||f||_{L^q(\gO;H)}+||g||_{L^q(\gO;\gG_2)})).
	\eee 
\end{proof}

Regarding a single realization of $\widehat u_{N_\ell,\eps_\ell,\ell}$, we want the pathwise error $||u(\go,\cdot)-\widehat u_{N_\ell,\eps_\ell,\ell}(\go,\cdot)||_V$ to decrease as $\ell$ increases,
so naturally the parameters $\eps_\ell$ and $\E(\widehat h_\ell^{2\gk_a})^{1/2}$ should decrease in $\ell$ and $N_\ell$ increase in $\ell$.
For example, by using a sequence of refining grids, the refinement parameter $\E(\widehat h_\ell^{2})^{1/2}$ may be divided roughly by a factor of two in each level, i.e. $2\E(\widehat h_\ell^{2})^{1/2}\approx\E(\widehat h_{\ell-1}^{2})^{1/2}$ for any $\ell\in\N$.
Similar refining factors may be imposed for the sum of the remaining eigenvalues $\Xi_{N_\ell}$ and the sampling errors $\eps_\ell$.
One advantage of the \MLMC\, estimator is that we are now able to even out the error contributions of the sampling bias $||\widehat u_{N_\ell,\eps_\ell,\ell}-u||_V$ and the statistical error (with respect to $M_\ell$) on each level. 
This is achieved by generating relatively few of the accurate, but expensive, samples for large $\ell$ and generating more of the cheap, but less accurate, samples on the lower levels.

\begin{cor}\label{cor:mlmc}
	Let the assumptions of Theorem~\ref{thm:mlmc} hold. For $L\in\N$ and given refinement parameters $\E(\widehat h_0^{2\gk_a})^{1/2}>\dots>\E(\widehat h_L^{2\gk_a})^{1/2}>0$ choose $N_\ell\in\N$ and $\eps_\ell>0$ such that
	\bee
	\Xi_{N_\ell}\simeq\eps^{2/s}_\ell\simeq \E(\widehat h_\ell^{2\gk_a})\quad\text{for $\ell=0,\dots,L$},
	\eee
	and $M_\ell\in\N$ such that for some arbitrary $\nu>0$
	\bee
	M_0^{-1}\simeq \E(\widehat h_L^{2\gk_a})\quad\text{and}\quad M_\ell^{-1}\simeq \E(\widehat h_L^{2\gk_a})^{-1}\E(\widehat h_{\ell-1}^{2\gk_a})(\ell+1)^{-2(1+\nu)}\quad\text{for $\ell=1,\dots,L$}.
	\eee
	The RMSE of the corresponding \MLMC\, estimator is then of order $\E(\widehat h_L^{2\gk_a})^{1/2}$:
	\begin{align*}
	||\E(u)-E^L(\widehat u_{N_L,\eps_L,L})||_{L^2(\gO;V)}= \cO (\E(\widehat h_L^{2\gk_a})^{1/2}).
	\end{align*}
\end{cor}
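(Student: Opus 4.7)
The plan is to substitute the prescribed balancing of the level-dependent parameters directly into the error bound of Theorem~\ref{thm:mlmc} and then show term-by-term that every contribution is of order $\E(\widehat h_L^{2\gk_a})^{1/2}$. First I would use the three-way equilibration $\Xi_{N_\ell}^{1/2}\simeq \eps_\ell^{1/s}\simeq \E(\widehat h_\ell^{2\gk_a})^{1/2}$ to collapse the three deterministic error sources on each level into a single quantity of size $\E(\widehat h_\ell^{2\gk_a})^{1/2}$, up to a level-independent factor. After this step the bound in Theorem~\ref{thm:mlmc} reduces to
\begin{equation*}
||\E(u)-E^L(\widehat u_{N_L,\eps_L,L})||_{L^2(\gO;V)}\le C'\Big(\E(\widehat h_L^{2\gk_a})^{1/2}+\frac{1}{\sqrt{M_0}}+\sum_{\ell=0}^{L-1}\frac{\E(\widehat h_\ell^{2\gk_a})^{1/2}}{\sqrt{M_{\ell+1}}}\Big)
\end{equation*}
for some constant $C'>0$ independent of $L$ and of all approximation parameters.

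The first summand is of the desired order by inspection, and the second is handled by the choice $M_0^{-1}\simeq\E(\widehat h_L^{2\gk_a})$. The substance lies in the telescopic sum. I would substitute the identity for $M_{\ell+1}^{-1/2}$ implied by the prescribed scaling of $M_{\ell+1}$; the level-dependent factors $\E(\widehat h_\ell^{2\gk_a})^{1/2}$ appearing in numerator and denominator cancel exactly, leaving
\begin{equation*}
\sum_{\ell=0}^{L-1}\frac{\E(\widehat h_\ell^{2\gk_a})^{1/2}}{\sqrt{M_{\ell+1}}}\simeq \E(\widehat h_L^{2\gk_a})^{1/2}\sum_{\ell=0}^{L-1}(\ell+2)^{-(1+\nu)}.
\end{equation*}

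The main, and essentially the only, point to verify is that the residual numerical factor $\sum_{\ell=0}^{L-1}(\ell+2)^{-(1+\nu)}$ is bounded uniformly in $L$. This is precisely the role played by the free parameter $\nu>0$ in the definition of $M_\ell$: the sum is a truncated $p$-series with $p=1+\nu>1$ and is therefore bounded by the convergent series $\sum_{\ell\ge 0}(\ell+2)^{-(1+\nu)}<\infty$, independently of $L$. Combining the estimates for the three remaining terms yields the claimed $\cO(\E(\widehat h_L^{2\gk_a})^{1/2})$ bound on the RMSE. The only subtle bookkeeping is making sure the constants accumulated in the collapse step and in the bound on the $p$-series absorb into a single $L$-independent constant, which is routine.
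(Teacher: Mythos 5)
Your proposal is correct and follows essentially the same route as the paper's proof: collapse the three per-level error contributions via the equilibration $\Xi_{N_\ell}\simeq\eps_\ell^{2/s}\simeq\E(\widehat h_\ell^{2\gk_a})$, substitute the sample-number scaling into the level sum, and bound the resulting $p$-series with $p=1+\nu>1$ uniformly in $L$ (the paper writes this constant as $\zeta(1+\nu)$). The only caveat is that the exact cancellation you invoke requires reading the prescription as $M_\ell^{-1}\simeq\E(\widehat h_L^{2\gk_a})\,\E(\widehat h_{\ell-1}^{2\gk_a})^{-1}(\ell+1)^{-2(1+\nu)}$, which is the orientation the paper's own computation (and its numerical section) actually uses.
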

\begin{proof}
	With the above choice of the approximation parameters $N_\ell, \eps_\ell$ and Theorem~\ref{thm:mlmc}, we obtain
	\begin{align*}
	||\E(u)-E^L(\widehat u_{N_L,\eps_L,L})&||_{L^2(\gO;V)} \le C\Big(4\E(\widehat h_L^{2\gk_a})^{1/2}+\sum_{\ell=0}^{L-1} \frac{\E(\widehat h_\ell^{2\gk_a})^{1/2}}{\sqrt{M_{\ell+1}}}\Big)\\
	&\le C\Big(4\E(\widehat h_L^{2\gk_a})^{1/2}+\E(\widehat h_0^{2\gk_a})^{1/2}\E(\widehat h_L^{2\gk_a})^{1/2}
	+\sum_{\ell=1}^L \E(\widehat h_L^{2\gk_a})^{1/2} (\ell+1)^{-1-\nu}\Big) \\
	&\le C\big(3+\E(\widehat h_0^{2\gk_a})^{1/2}+\zeta(1+\nu)\big)\E(\widehat h_L^{2\gk_a})^{1/2},
	\end{align*}
	where $\zeta(\cdot)$ is the Riemann zeta function.
\end{proof}
For the level-dependent choice of $N_\ell$ and $\eps_\ell$ we refer to Remark~\ref{rem:mc}.
In the remainder of this section, we introduce a modification of the \MLMC\, method to further reduce computational complexity.  

\subsection{Bootstrapping \MLMC}
Recall the \MLMC\, estimator
\bee
E^L(\widehat u_{N_L,\eps_L,L})=\sum_{\ell=0}^L\frac{1}{M_\ell}\sum_{i=1}^{M_\ell} \Big(\widehat  u^{(i,\ell)}_{N_\ell,\eps_\ell,\ell}-\widehat u^{(i,\ell)}_{N_{\ell-1},\eps_{\ell-1},{\ell-1}}\Big)
\eee
of  $\E(u)$ as in Eq.~\eqref{eq:mlmc}, where the terms in the second sum are independent copies of the corrections $\widehat u_{N_\ell,\eps_\ell,\ell}-\widehat u_{N_{\ell-1},\eps_{\ell-1},\ell-1}$. In total, one has to generate $M_\ell+M_{\ell+1}$ samples of $\widehat u_{N_\ell,\eps_\ell,\ell}$ for each $\ell=0,\dots,L$ (where we have set $M_{L+1}:=0$). This could be expensive even in low dimensions $d$.
We can reduce this effort if we ``recycle'' the already available samples and generate the differences  
\bee
\widehat u^{(i,\ell)}_{N_\ell,\eps_\ell,\ell}-\widehat u^{(i,\ell)}_{N_{\ell-1},\eps_{\ell-1},{\ell-1}}
\quad\text{and}\quad
\widehat u^{(i,\ell)}_{N_{\ell+1},\eps_{\ell+1},{\ell+1}}-\widehat u^{(i,\ell)}_{N_\ell,\eps_\ell,\ell}
\eee
based on the same realization $\widehat u^{(i,\ell)}_{N_\ell,\eps_\ell,\ell}$ for $\ell=0,\dots,L$.
That is, we drop the second superscript $\ell$ in $\widehat u^{(i,\ell)}_{N_\ell,\eps_\ell,\ell}$ and arrive at the \textit{bootstrap \MLMC\, estimator} 
\bee
E^L_{BS}(\widehat u_{N_L,\eps_L,L}):=\sum_{\ell=0}^L\frac{1}{M_\ell}\sum_{i=1}^{M_\ell}\widehat u^{(i)}_{N_\ell,\eps_\ell,\ell}-\widehat u^{(i)}_{N_{\ell-1},\eps_{\ell-1},{\ell-1}}.
\eee
This entails generating $M_\ell$ realizations of the random variable $\widehat u_{N_\ell,\eps_\ell,\ell}$ instead of $M_\ell+M_{\ell+1}$.
The samples $u^{(i)}_{N_\ell,\eps_\ell,\ell}$ are then independent in $i$, but not anymore across all levels $\ell$ for a fixed index $i$.
The bootstrap estimator is unbiased, i.e $\E(E^L_{BS}(\widehat u_{N_L,\eps_L,L}))=\E(\widehat u_{N_L,\eps_L,L})$,
and it holds
\bee
\lim_{L\to+\infty}\E(E^L_{BS}(\widehat u_{N_L,\eps_L,L}))=\lim_{L\to+\infty}\E(E^L(\widehat u_{N_L,\eps_L,L}))=\lim_{L\to+\infty}\E(u_{N_L,\eps_L}) = \E(u).
\eee
The introduced modification is a simplified version of the \textit{Multifidelity Monte Carlo estimator} (see~\cite{GPW16}).
Under suitable assumptions on the variance of $u$, it is shown in~\cite{GPW16} that the Multifidelity Monte Carlo approach achieves the same rate of convergence as the standard \MLMC\, method
with reduced computational effort. 
The bootstrap estimator corresponds to a Multifidelity Monte Carlo estimator where the weighting coefficients for all level corrections $\widehat u_{N_\ell,\eps_\ell,\ell}-\widehat u_{N_{\ell-1},\eps_{\ell-1},{\ell-1}}$ are set equal to one. 

We emphasize that the error bounds derived in Thm.~\ref{thm:mlmc} and Cor.~\ref{cor:mlmc} do not require independence of the sampled differences $\widehat u_{N_\ell,\eps_\ell,\ell}-\widehat u_{N_{\ell-1},\eps_{\ell-1},{\ell-1}}$ across the levels $\ell$.
Thus, the asymptotic order of convergence also holds for the bootstrapping estimator. 
However, we have now introduced additional correlation across the levels, which may entail higher variance, and thus slower convergence of the bootstrapping method.
The variance of the standard \MLMC\, estimator is easily calculated as
\bee
\var(E^L(\widehat u_{N_L,\eps_L,L}))=\sum_{\ell=0}^L\frac{\var(\widehat u_{N_\ell,\eps_\ell,\ell}-\widehat u_{N_{\ell-1},\eps_{\ell-1},{\ell-1}})}{M_\ell}, 
\eee
whereas 
\begin{align*}
\var(E_{BS}^L(\widehat u_{N_L,\eps_L,L}))&=\var\Big(\sum_{\ell=0}^L\frac{1}{M_\ell}\sum_{i=1}^{M_\ell}\widehat u^{(i)}_{N_\ell,\eps_\ell,\ell}-\widehat u^{(i)}_{N_{\ell-1},\eps_{\ell-1},{\ell-1}}\Big)\\
&=\var\Big(\sum_{i=1}^{M_0-M_1}\frac{\widehat u^{(i)}_{N_0,\eps_0,0}}{M_0}+\sum_{i=M_0-M_1+1}^{M_0-M_2}\frac{\widehat u^{(i)}_{N_1,\eps_1,1}-\widehat u^{(i)}_{N_0,\eps_0,0}}{M_1}
+\frac{\widehat u^{(i)}_{N_0,\eps_0,0}}{M_0}+\dots\\
&\qquad+\sum_{i=M_0-M_L+1}^{M_0}\sum_{\ell=0}^L\frac{\widehat u^{(i)}_{N_{\ell},\eps_\ell,\ell}-\widehat u^{(i)}_{N_{\ell-1},\eps_{\ell-1},{\ell-1}}}{M_\ell}\Big)\\
&=\sum_{\ell=0}^L(M_{\ell}-M_{\ell+1})\var\left(\sum_{k=0}^{\ell}\frac{\widehat u_{N_k,\eps_k,k}-\widehat u_{N_{k-1},\eps_{k-1},{k-1}}}{M_k}\right)\\
\end{align*}

In case that the differences $\widehat u_{N_\ell,\eps_\ell,\ell}-\widehat u_{N_{\ell-1},\eps_{\ell-1},{\ell-1}}$ are positively correlated across the levels, 
we trade in simulation time for a possibly higher RMSE, where the ratio of this trade is problem-dependent and hard to access beforehand. 
Nevertheless, we will compare this modified estimator with the standard \MLMC\, estimator in different scenarios to show its advantages. 

\section{Numerical examples}\label{sec:num}
Throughout this section, we plot the error rates against the smallest estimated root-mean-square refinement size $\E(\widehat h_L^2)^{1/2}$.
For the standard, non-adaptive algorithms this corresponds to the preset deterministic refinement size $h_L$ and all error contributions may be equilibrated a-priori as in Cor.~\ref{cor:mlmc}.
In the adaptive algorithm, to align the discretization error with the error contributions of $\sum_{i>N}\eta_i$, $\eps$ and the statistical error, we sample a few realizations of the diffusion coefficient before the start of the Monte Carlo loop.
This allows us to estimate the values of $\E(\widehat h_\ell^2)^{1/2}$ for each $\ell=0,\dots,L$ and choose $N$, $\eps$ and the number of samples on each level accordingly.
All numerical examples are implemented with MATLAB and calculated on a workstation with 16 GB Memory and Intel quadcore processor with 3.4 GHz.

\subsection{Numerical examples in 1D}
For all test scenarios in this subsection, we consider the diffusion problem~\eqref{eq:elliptic} in the one dimensional domain $\D=(0,1)$ with homogeneous Dirichlet boundary conditions, i.e. $\gG_1=\partial\D$, and source term $f\equiv1$. 
The deterministic part of the diffusion coefficient is $\ol a\equiv 0$ and we consider a log-Gaussian component, i.e. $\Phi(w)=\exp(w)$, where the Gaussian field $W$ is characterized by either the \textit{Brownian motion covariance operator} $Q_{BM}:H\to H$ with 
\bee
[Q_{BM}\varphi](y):=\int_\D min(x,y)\varphi(x)dx,\quad \varphi\in H
\eee
or the \textit{squared exponential covariance operator}
\bee
[Q_{SE}\varphi](y):=v\int_\D \exp\left(\frac{-|x-y|^2}{2r^2}\right)\varphi(x)dx,\quad \varphi\in H
\eee
with variance parameter $v>0$ and correlation length $r>0$. 
The eigenbasis of $Q_{BM}$ is given by
\bee
\eta_i=\Big(\frac{8}{(2i+1)\pi}\Big)^2,\quad e_i(x)=\sin\Big(\frac{(2i+1)\pi x}{2}\Big),\quad i\in\N_0,
\eee
see for example \cite[p. 46 ff.]{A09},
where the spectral basis of $Q_{SE}$ may be efficiently approximated by Nystr\"om's method, see \cite{RW06}.
The number of partition elements is given by $\tau=\cP+2$, where $\cP$ is a Poisson-distributed random variable with intensity parameter $10$.  
On average, this splits the domain in $12$ disjoint intervals and the diffusion coefficient has almost surely at least one discontinuity. 
The random positions of the $\tau-1$ jumps  $x_1,\dots,x_{\tau-1}\subset\D$ in the interior of $\D$ are uniformly distributed over $\D$, generating the random partition $\cT=\{(0,x_1],(x_1,x_2],\dots,(x_{\tau-1},1)\}$ for each realization of $\tau$ and the jump positions $x_1,\dots,x_{\tau-1}$.
This fits into our framework of the jump-diffusion coefficient by setting $\gl=12 \gL$, where $\gL$ denotes the Lebesgue measure on $((0,1),\cB(0,1))$:
The uniform distribution of the discontinuities on $\D$ corresponds to a distribution with respect to $\gL$ on $\D$ and the multiplication with $12$ ensures that $\E(\tau)$ is as desired. 
In the subsequent examples we vary the distribution of the jump heights $P_i$.

To obtain pathwise approximations of the samples $u_{N,\eps}(\go,\cdot)$, we use non-adaptive and adaptive piecewise linear elements and compare both approaches. 
In addition, we combine each discretization method with regular and bootstrapping \MLMC\ sampling, so in total we compare four different algorithms. 
The adaptive triangulation is based on each sampled partition $\cT(\go)$ as described in Section~\ref{sec:fem}, see Fig.~\ref{fig:BB_Unif} and \ref{fig:BB_GIG}.
In the graphs below, we plot the RMSE of the adaptive algorithms against the inverse estimated refinement size $\E(\widehat h_\ell^2)^{-1/2}$, for the non-adaptive algorithms this corresponds to the (deterministic) parameters $h_\ell^{-1}$. 
The entries of the stiffness matrix are approximated by the midpoint rule, which ensures a pathwise error of order $h_\ell^3$ on each simplex $K$. To ensure that this is sufficiently precise, we repeated our experiments with a five-point Gauss-Legendre quadrature, which did not entail significant changes.
In the \MLMC\, algorithm, the non-adaptive triangulations are generated with refinement $h_\ell=2^{-\ell-1}$, whereas we set the same threshold as maximum refinement size $\overline h_\ell=h_\ell=2^{-\ell-1}$ in the adaptive algorithm. 
As realized and maximal values of $\widehat h_\ell$ may differ significantly, we set 
$\Xi_{N_\ell}=\eps_\ell=\E(\widehat h_\ell^2)$ for $\ell=0,\dots,L$ and choose the number of samples as $M_0=\ceil{\E(\widehat h_L^2)^{-1}},\,M_\ell=\ceil{\E(\widehat h_L^2)^{-1}\E(\widehat h_\ell^2)\ell^{2(1+\nu)}}$ with $\nu=0.01$ for $\ell=1,\dots,L$. 
The same error equilibration is used for the non-adaptive method, only with $\E(\widehat h_\ell^2)$ replaced by $h_\ell^2$.	
To subtract samples $u_{N_\ell,\eps_\ell,\ell}$ and $u_{N_{\ell-1},\eps_{\ell-1},{\ell-1}}$ in the MLMC estimator and add samples of the same refinement level (but on possibly different stochastic triangulations), we prolong the samples onto a reference grid with refinement size $(\E(\widehat{h}_\ell^2))^{1/2}$.
We consider the test cases with $L=0,\dots,7$ and use $E^L(\widehat u_{N_L,\eps_L,L})$ with $L=9$, as a reference estimate for $\E(u)$.
The RMSE $||E^L(\widehat u_{N_L,\eps_L,L})-\E(u))||_{L^2(\gO;V)}$ is then estimated by averaging $20$ samples of the error $||E^L(\widehat u_{N_L,\eps_L,L})-E^{9}(u_{N_{9},\eps_{9},9})||_V^2$ for $L=0,\dots,7$. To calculate the RMSE, we use a reference grid with $10^3$ equally spaced points in $\D$, thus the error stemming from the interpolation\textbackslash prolongation may be neglected.

\begin{figure}[ht]
	\centering
	\includegraphics[height=0.22\textheight, width=0.49\textwidth,]{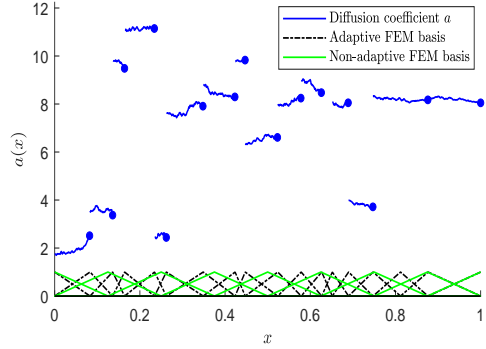}
	\caption{Sample of the diffusion coefficient with Brownian motion covariance and uniformly distributed jumps.}
	\label{fig:BB_Unif}
\end{figure}
As our first numerical example, we use the Brownian motion covariance operator $Q_{BM}$ and i.i.d uniformly distributed jump heights $P_i\sim\cU([0,10])$, 
hence the sampling error $\eps_\ell$ is equal to zero on every level and may be omitted for this scenario.
A sample of the corresponding diffusion coefficient with illustrated adaptive and non-adaptive FEM-basis is given in Fig.~\ref{fig:BB_Unif}.
Fig.~\ref{fig:BB_Unif_conv} indicates that the adaptive algorithm converges considerably faster than the estimator with non-adaptive FEM basis.  
Asymptotically, we see that both adaptive RMSE curves decay with rate nearly one, whereas the non-adaptive methods only show a rate of $\gk\approx 0.75$.
One sees that the adaptive \MLMC\, estimator also has a better time-to-error ratio, so it is possible to reduce the RMSE significantly using a little more computational effort to adjust the FEM basis in each sample.
Surprisingly there is little difference in the convergence speed whether or not we use a bootstrapping algorithm combined with adaptive resp. non-adaptive FEM. 
Here one would expect at least a slightly higher RMSE of the bootstrapping algorithms, but in this example, the error of both bootstrapping estimators is even lower compared to their non-bootstrapping alternatives.  
Naturally, bootstrapping decreases computational time (see Fig.~\ref{fig:BB_Unif_conv}).
\begin{figure}[ht]
	\centering
	\subfigure{\includegraphics[height=0.22\textheight, width=0.49\textwidth,]{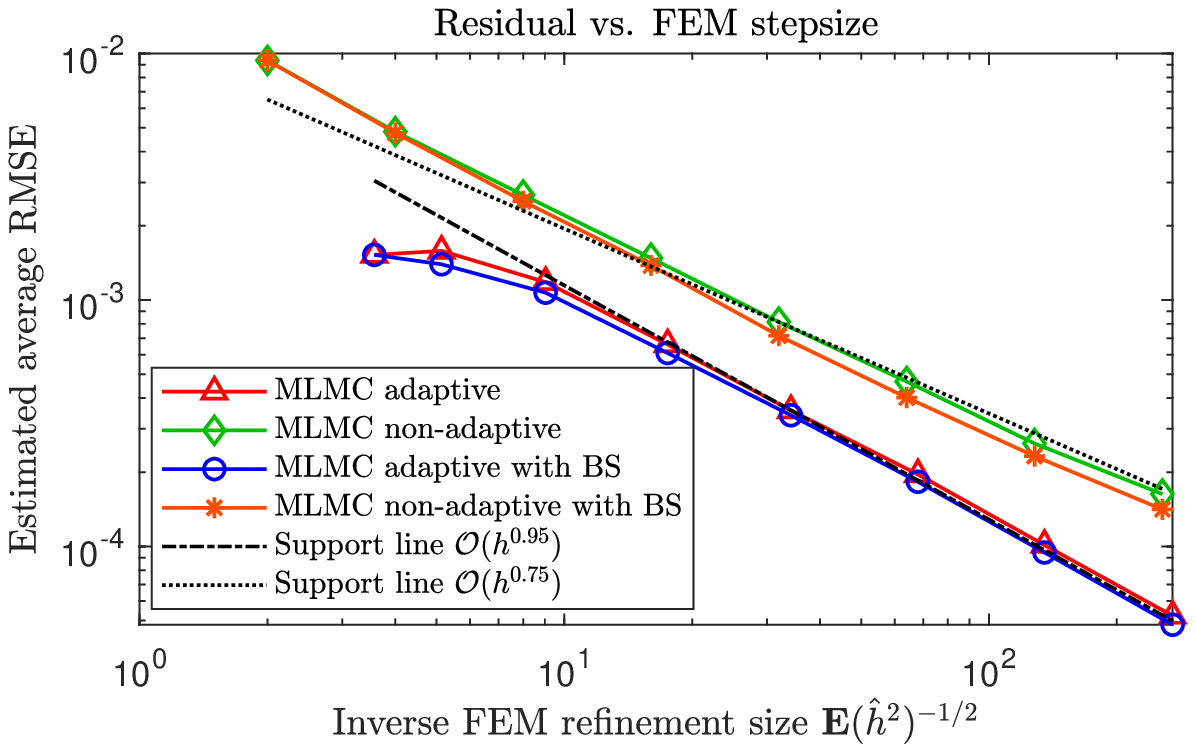}}
	\subfigure{\includegraphics[height=0.22\textheight, width=0.49\textwidth,]{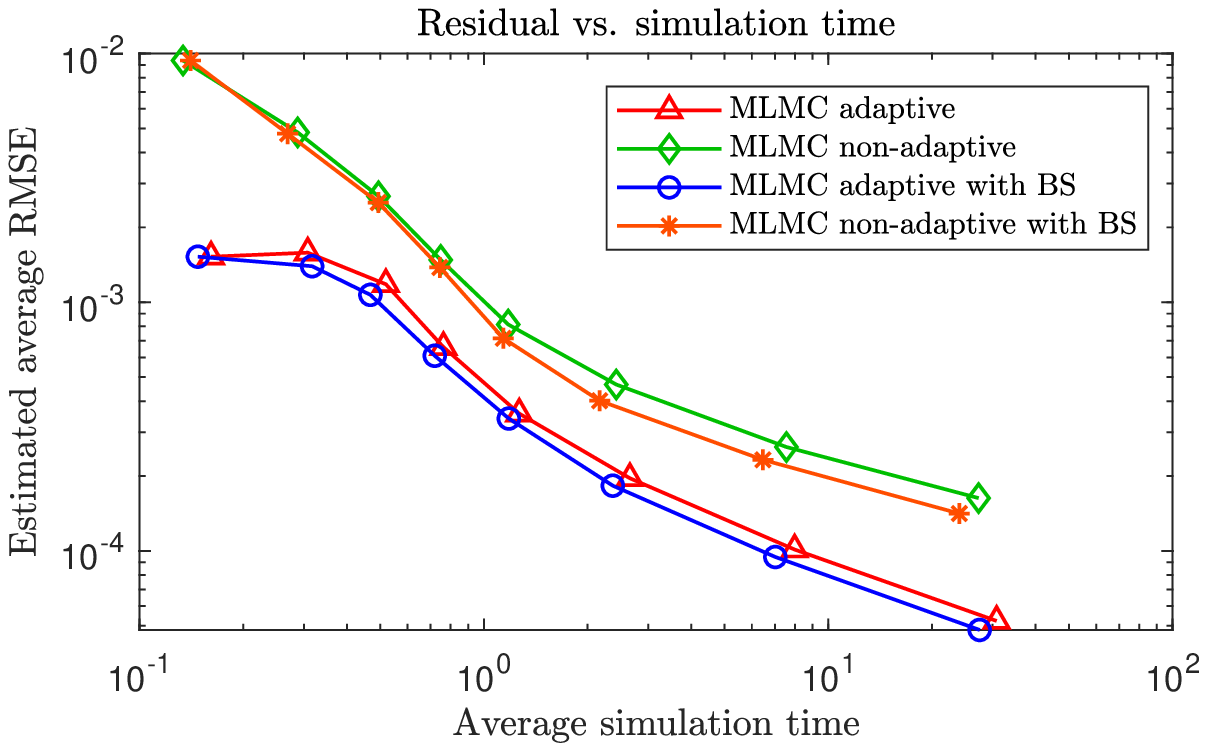}}
	\caption{Left: RMSE with $\cU([0,10])$-distributed jumps, Right: Time-to-error plot.}
	\label{fig:BB_Unif_conv}
\end{figure}

In the next example, we consider the squared exponential covariance operator and a more involved distribution of jump heights, where sampling is rather expensive and may not be realized in a straightforward manner. 
The jump heights $P_i$ now follow a continuous \textit{generalized inverse Gaussian} (GIG) distribution with density
\bee
f_{GIG}(x)=\frac{(\psi/\chi)^{\ol\gl/2}}{2K_{\ol\gl}(\sqrt{\psi\chi})}x^{\ol\gl-1}\exp(-\frac 1 2(\psi x+\chi x^{-1})),\quad x>0
\eee
and parameters $\chi,\psi>0$, $\ol\gl\in\R$, where $K_{\ol\gl}$ is the modified Bessel function of the second kind with $\ol\gl$ degrees of freedom, see \cite{BN78, BNH77}.
As shown in \cite{A82}, sampling this distribution by Acceptance-Rejection is possible, but expensive when $\ol\gl<0$, since the vast majority of outcomes has to be rejected.
We rather generate approximations $\widetilde P_i$ of $P_i$ by a Fourier inversion technique such that $\E(|\widetilde P_i-P_i|^2)\le\eps_\ell$ for a given $\eps_\ell>0$.
For details on the Fourier inversion algorithm, the sampling of GIG distributions and the corresponding error bounds we refer to \cite{BS17}.
The GIG parameters are set as $\psi=0.25, \chi=9$ and $\ol\gl=-1$, the resulting density $f_{GIG}$ and a sample of the diffusion coefficient are given in Fig.~\ref{fig:BB_GIG}. 
\begin{figure}[ht]
	\centering
	\subfigure{\includegraphics[height=0.22\textheight, width=0.49\textwidth,]{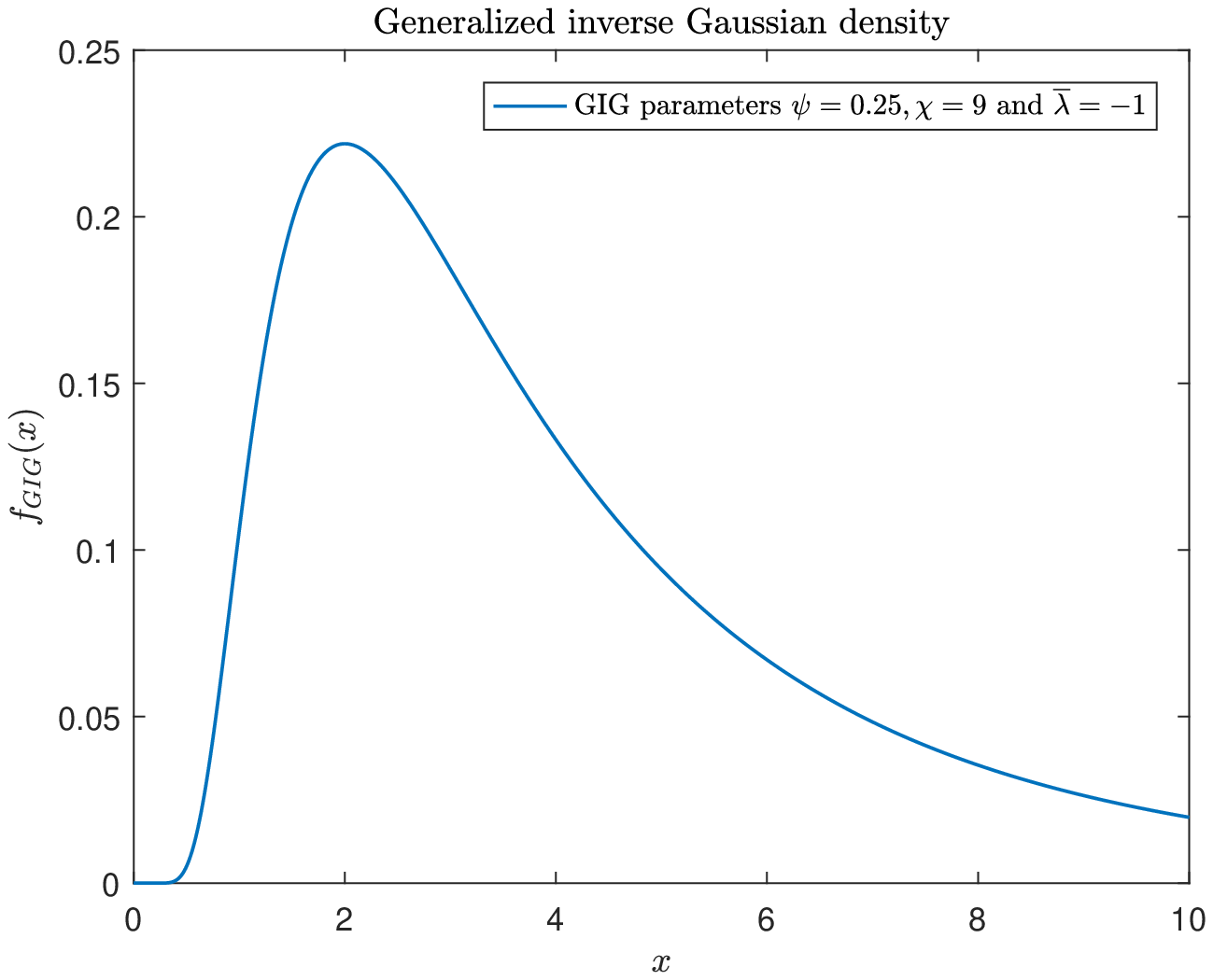}}
	\subfigure{\includegraphics[height=0.22\textheight, width=0.49\textwidth,]{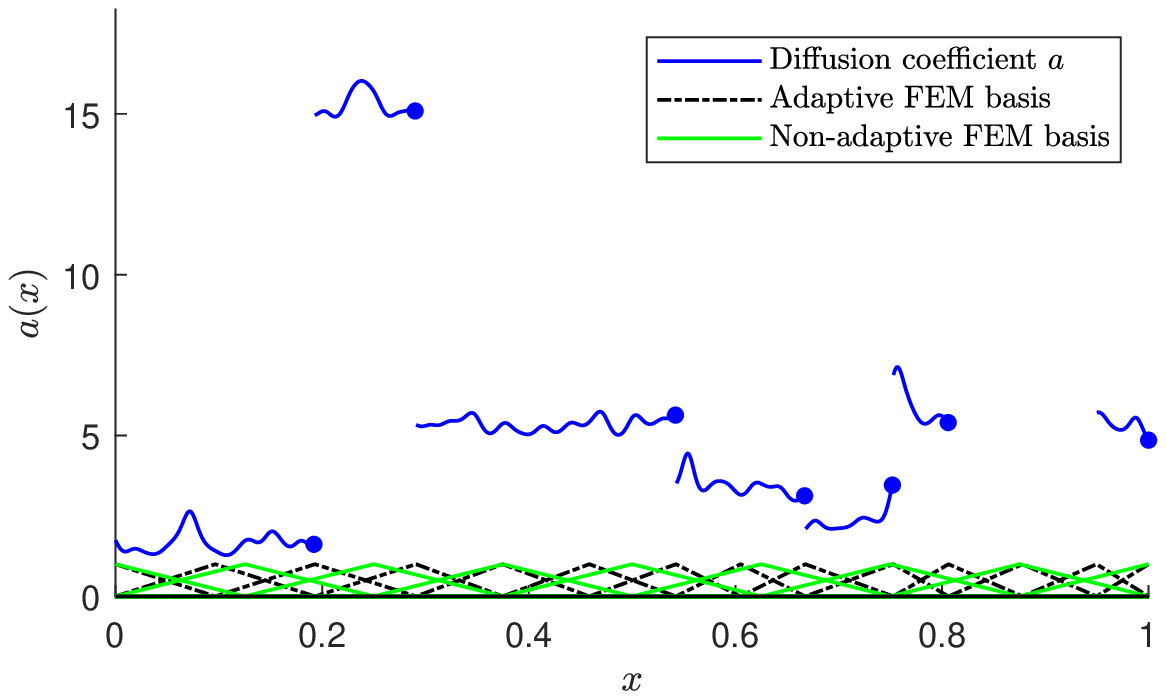}}
	\caption{Left: GIG density, right: sample of the diffusion coefficient with log-squared exponential covariance and GIG distributed jumps.}
	\label{fig:BB_GIG}
\end{figure}
The error curves show a similar behavior compared to the first example, with asymptotic error rates of $1$  resp. $0.75$ for the adaptive resp. non-adaptive algorithms, see Fig.~\ref{fig:BB_GIG_conv}. 
Again, bootstrapping tends to produce a slightly lower RMSE.
The expensive sampling from the GIG distribution causes increased computational times, 
which entails that the bootstrapping is even more favorable in a setting with a rather challenging jump height distributions. 
\begin{figure}[ht]
	\centering
	\subfigure{\includegraphics[height=0.22\textheight, width=0.49\textwidth,]{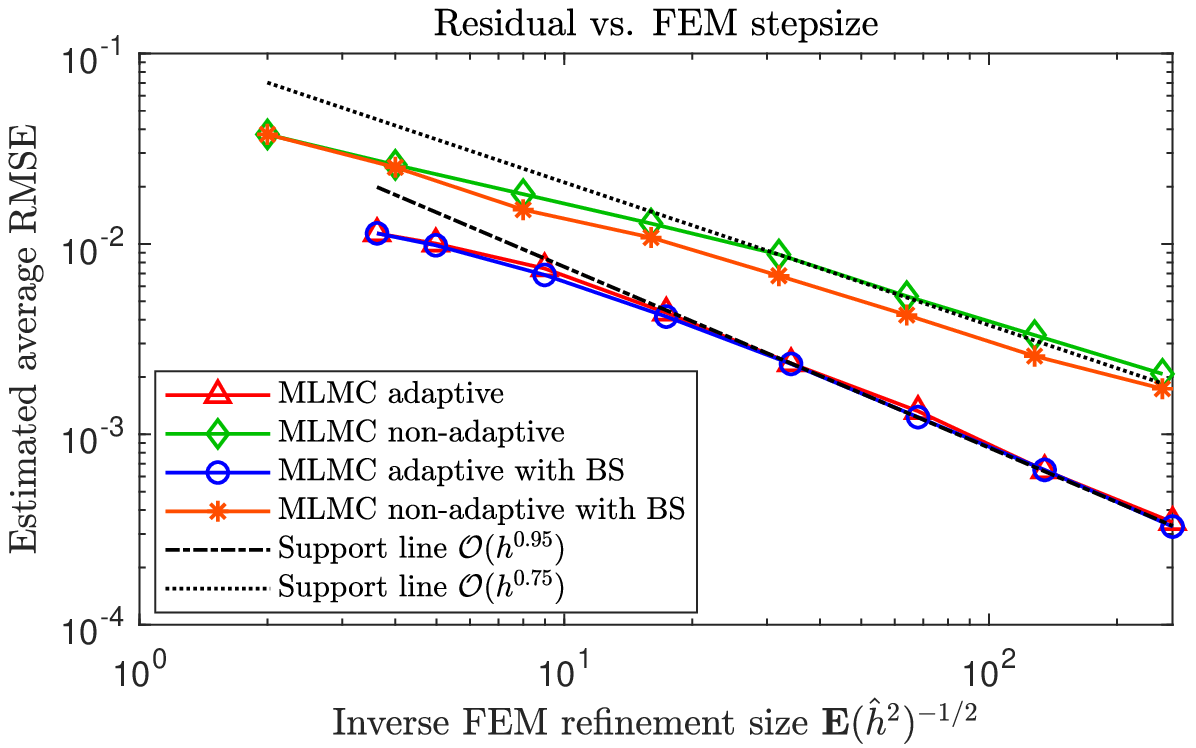}}
	\subfigure{\includegraphics[height=0.22\textheight, width=0.49\textwidth,]{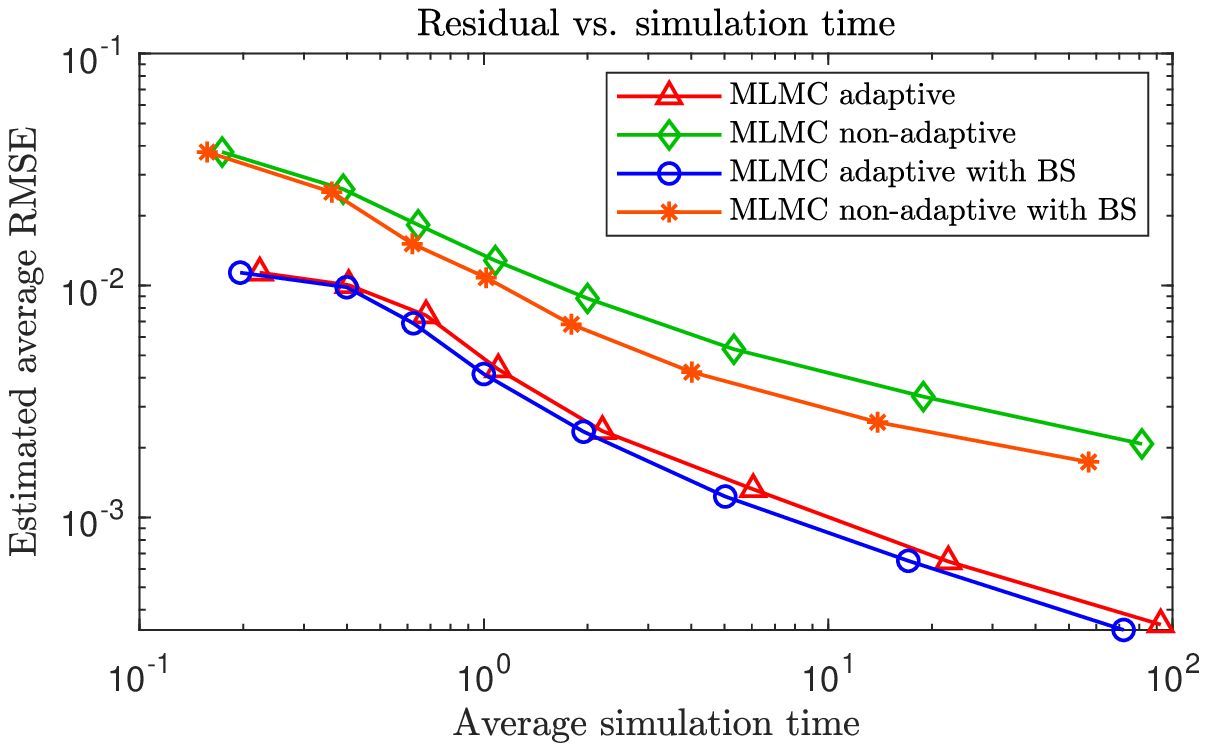}}
	\caption{Left: RMSE of the example with GIG-distributed jumps, Right: Time-to-error plot.}
	\label{fig:BB_GIG_conv}
\end{figure}

The Gaussian random field $W:\gO\times\D\to\R$ with the Brownian motion covariance operator is almost surely not differentiable in a.e.~$x\in\D$, but only H\"older continuous. 
In addition, the covariance of the random variables $W(\cdot,x_1)$ and $W(\cdot,x_2)$, where $x_1,x_2\in\D$ is given by the kernel function $\min(x_1,x_2)$.
For a fixed distance between $x_1$ and $x_2$, this implies that the correlation in the random field increases as one moves to the right boundary of the domain.
In some applications, however, one might instead want a random field with stationary correlation structure and/or more spatial regularity.    
This can be achieved with the introduced jump-diffusion coefficient by using, for instance, $Q_{SE}$ or another Mat\'ern class covariance operator.
These covariance operators generate random stationary correlated random fields and also increase the regularity of $W$ in $\D$.  
It is further possible to vary the position and magnitude of the discontinuities of $a$ to model certain desirable characteristics of the diffusion.   
For example, one could enforce only one jump per sample which is concentrated in some small neighborhood located around a single point in $\D$.
The corresponding jump heights on each partition may then also be chosen concentrated around certain values to model, for instance, transitions in heterogeneous or fractured media. 

\subsection{Numerical results in 2D}
In the two-dimensional setting, we work on the domain $\D=(0,1)^2$, with homogeneous Dirichlet or mixed Neumann-Dirichlet boundary conditions and we assume that the deterministic part of the drift coefficient is zero ($\bar a\equiv0$). 
The Gaussian part of $a$ is given by the \KL expansion
with spectral basis given by
\bee
\eta_i:=v\exp(-\pi^2i^2r^2),\quad e_i(x):=\sin(\pi ix_1)\sin(\pi ix_2),
\eee
with correlation length $r>0$ and total variance  $v>0$.
This basis is related to the two-dimensional heat kernel 
\bee
G(t,x,y):[0,\infty)\times\D^2\to\R^+,\quad (t,x,y)\mapsto\frac{1}{4\pi t}\exp(-\frac{-||x-y||_2^2}{4t})
\eee
in the sense that it solves the associated integral equation for $t=r^2/2$:
\bee
v\int_{\D}\exp(-\frac{||x-y||_2^2}{2r^2})e_i(y)dy=\eta_i e_i(x),\quad i\in\N
\eee
with the boundary condition $e_i=0$ on $\partial\D$, see \cite{GN13}. 
Compared with a Gaussian field generated by a squared exponential covariance operator, this field shows a very similar behavior, except that it is zero on the boundary. It, further, has the advantage, that all expressions are available in closed form and we forgo the numerical approximation of the eigenbasis. For all experiments in this section we use the parameters $v=0.25$ and $r=0.02$.
As before, we consider a log-Gaussian random field, meaning $\Phi(w)=\exp(w)$.
To illustrate the flexibility of a jump-diffusion coefficient $a$ as in Def.~\ref{def:a}, we vary the random partitioning of $\D$ for each example and give a detailed description below.
Again, we approximate the entries of the stiffness matrix by the midpoint rule on each simplex $K$. 
To ensure that this is sufficient, we also tested a four-point Gauss-Legendre quadrature rule for triangular domains, which did not change the outcomes significantly.
As in the previous subsection, we prolong linearly on a fixed grid to add and subtract the generated samples of the \MLMC\ estimator. The reference grid for the RMSE consists of $400\times 400$ equally spaced points in the domain $\D=(0,1)^2$, which yields a negligible interpolation error.
The multilevel approximation parameters are identical to the 1D example, except that $\ol h_\ell=h_\ell=\frac{2}{5}2^{-\ell}$ and we now calculate the reference solution on level $7$ to estimate the RMSE (by averaging $10$ independent \MLMC\, estimations) up to level $L=4$ or $L=5$, depending on the example. All RMSE curves are plotted against the inverse estimated refinement $\E(\widehat h^2)^{-1/2}$ on the abscissa. 

In the first 2D example, the random partitions $\cT$ of $\D$ are generated by random lines through the domain. 
More precisely, we sample independent Poisson random variables $\cP_x, \cP_y\sim Poi(1)$ and a total of $2(\cP_x+\cP_y+2)$ independent $\cU([0,1])$-distributed random variables
$U_1,\dots,U_{2(\cP_x+\cP_y+2)}$.
The first $\cP_x+1$ uniform random variables represent the jump positions on $(0,1)\times\{0\}$, 
the second set $U_{\cP_x+2},\dots,U_{2\cP_x+2}$ are the positions of the discontinuities on the opposing axis $(0,1)\times\{1\}$ in $\partial\D$. 
We connect opposing points in ascending order by straight lines to obtain a vertical random partition of $\D$. 
Analogously, the horizontal splitting is achieved by distributing and connecting the remaining $2\cP_y+2$ uniform random variables on the sets $\{0\}\times(0,1)$ and $\{1\}\times(0,1)$.
As we obtain an average of $\E((\cP_x+2)(\cP_y+2))=9$ partition elements of uniformly distributed size and location, $\gl$ may be set as $\gl=9\gL$ in this example, 
where $\gL$ is the Lebesgue measure on $(\D,\cB(\D))$.

To each of the $(\cP_x+2)(\cP_y+2)$ random quadrangles we assign a jump height $P_i$, where the sequence $(P_i,i\in\N)$ is i.i.d. $\cU([0,5])$-distributed.
This specific structure of $a$ may be used, for example, to model stationary flows through heterogeneous media, where the hydraulic conductivity varies on sub-domains of the medium.
\begin{figure}[ht]
	\centering
	\subfigure{\includegraphics[height=0.22\textheight, width=0.49\textwidth,]{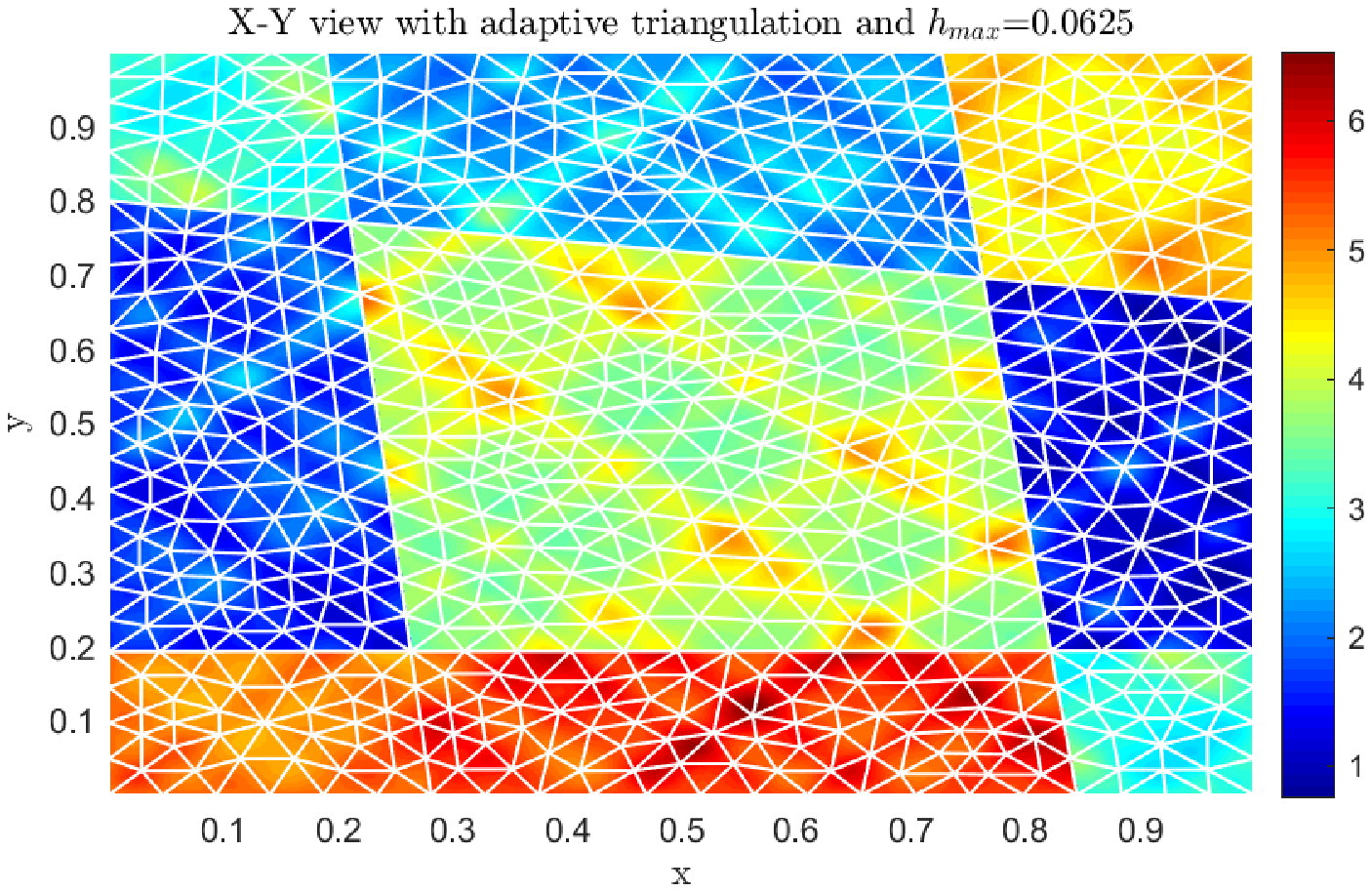}}
	\subfigure{\includegraphics[height=0.22\textheight, width=0.49\textwidth,]{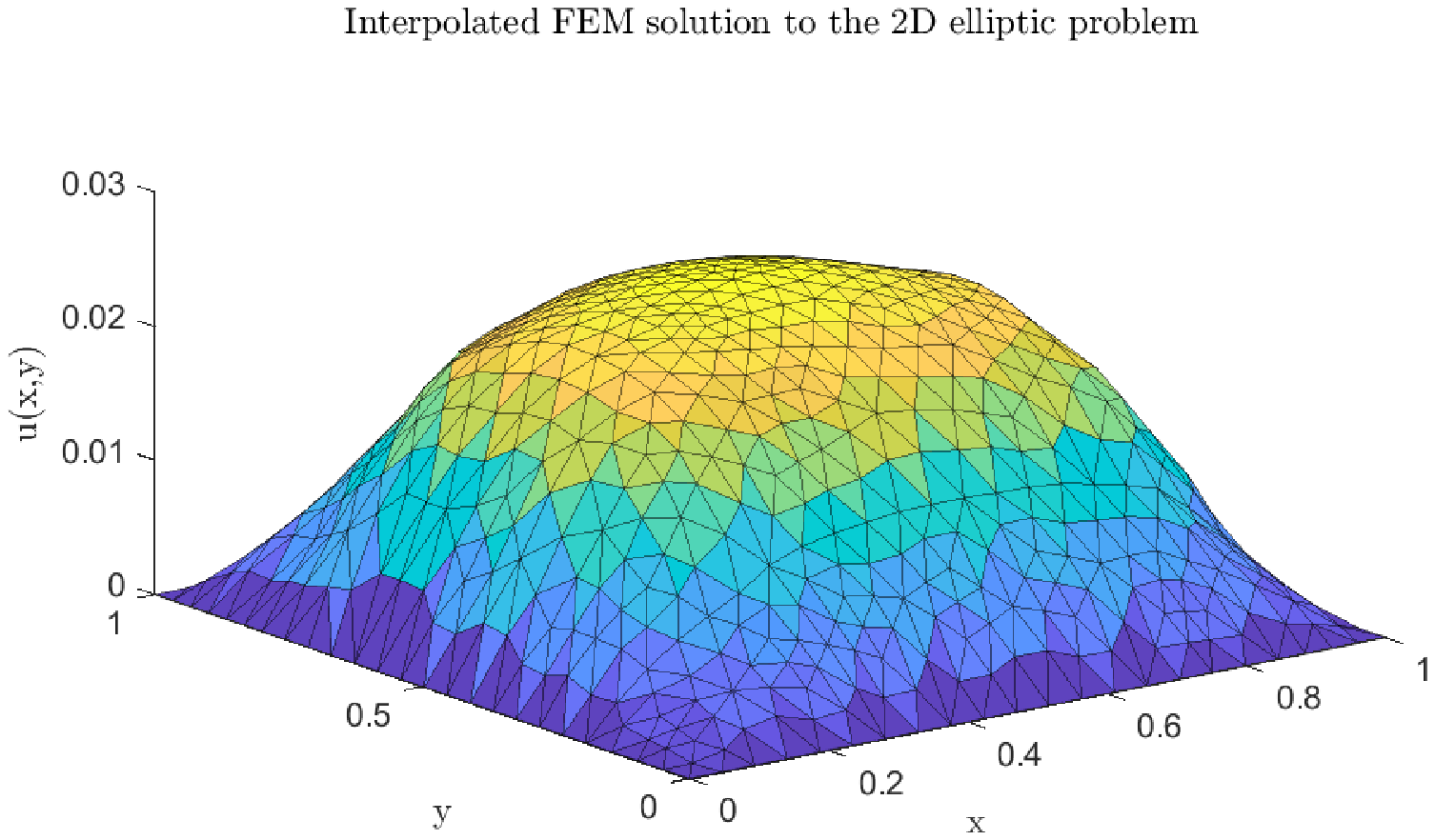}}
	\caption{Left: Sample of the 2D diffusion coefficient in heterogeneous media. Right: Approximated FEM solution to the sample of $a$.}
	\label{fig:2D_cm}
\end{figure}
We assume homogeneous Dirichlet-boundary conditions on $\gG_1=\partial\D$ and $f\equiv1$ as source term. 
A sample of the diffusion coefficient and the FEM approximation is given in Fig.~\ref{fig:2D_cm}. 
Compared to a solution with constant coefficient, the influence of the discontinuous diffusion is clearly visible in the contour of the approximated solution.

Fig.~\ref{fig:2D_conv0} shows that the adaptive \MLMC\, and non-adaptive \MLMC\, algorithm perform quite similar, where the asymptotic error rate of the nonadaptive methods is slightly lower with $0.85$. Compared to that, we recover a convergence rate of $0.9$ and lower absolute errors for the adaptive method.
In both cases, bootstrapping and normal \MLMC\, sampling produces almost identical errors which results in the best time-to-error ratio of the adaptive bootstrap \MLMC\, estimator, see Fig.~\ref{fig:2D_conv0}.
\begin{figure}[ht]
	\centering
	\subfigure{\includegraphics[height=0.22\textheight, width=0.49\textwidth,]{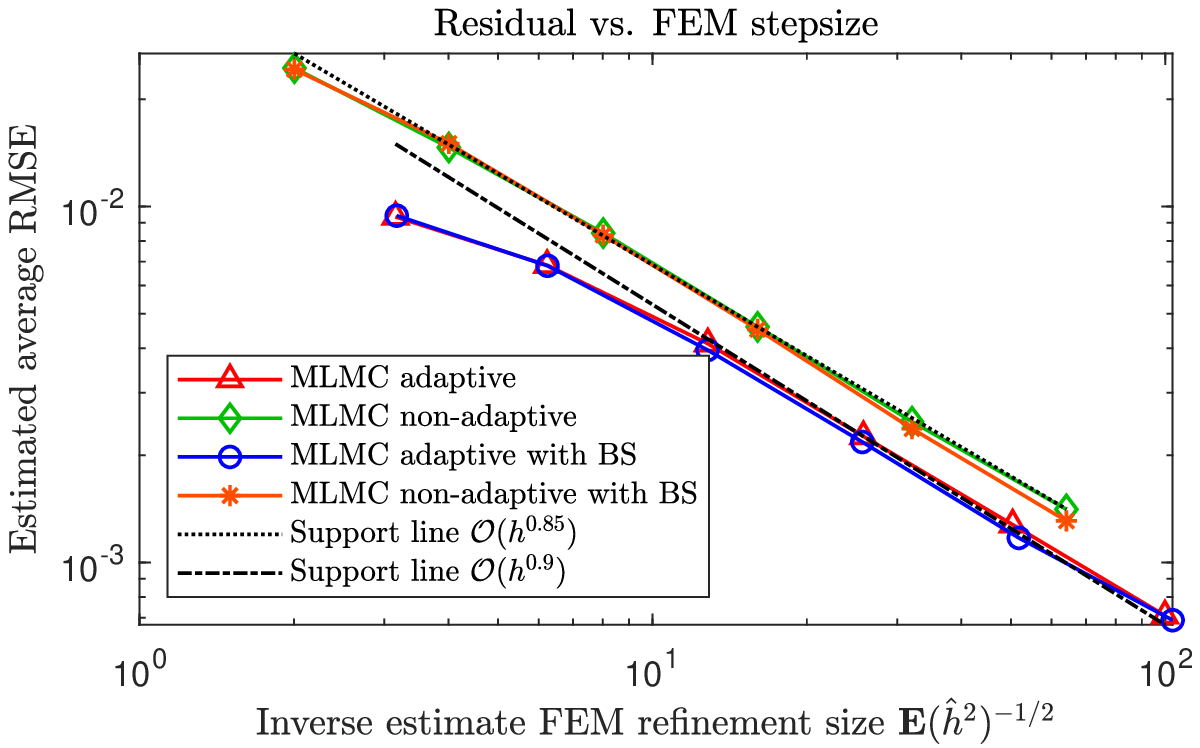}}
	\subfigure{\includegraphics[height=0.22\textheight, width=0.49\textwidth,]{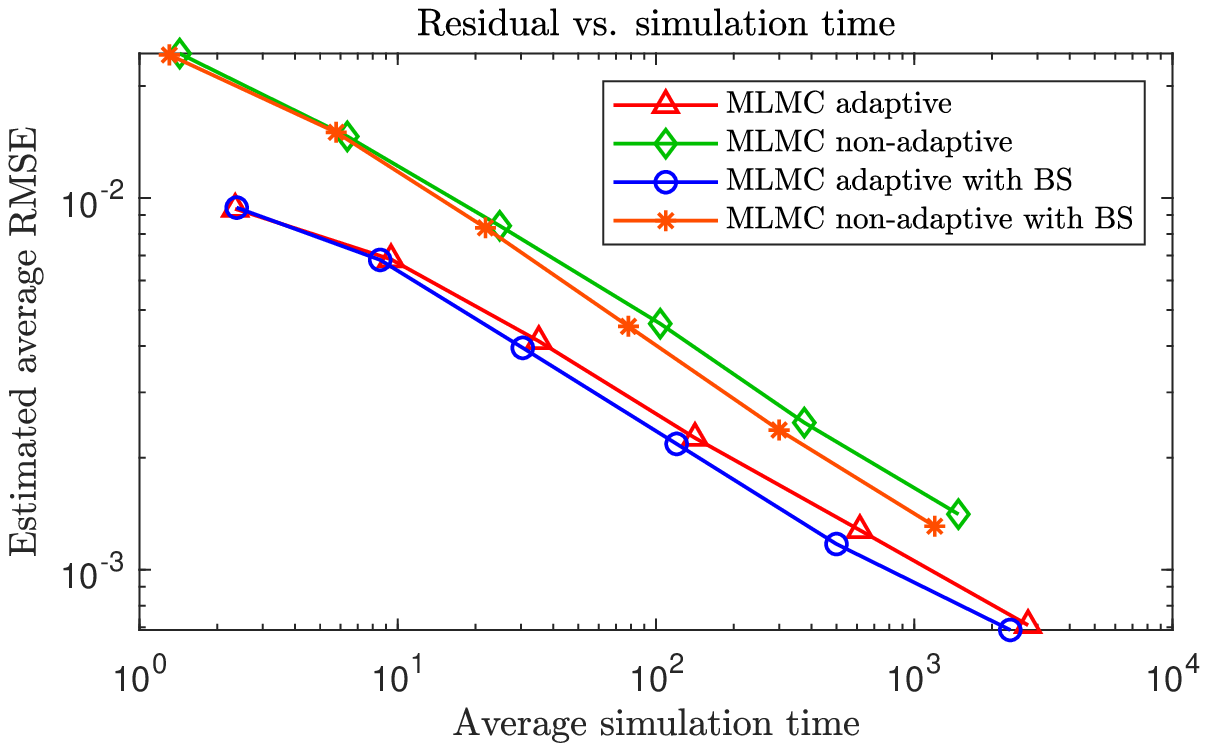}}
	\caption{Left: RMSE of the heterogeneous media example, Right: Time-to-error plot.}
	\label{fig:2D_conv0}
\end{figure}

In the second 2D example, we aim to mimic the structure of a fractured porous medium.
To this end, we set $\gl=5\gL$ and sample accordingly $\tau=5$ uniformly distributed random points $x_1,\dots,x_5$ on the domain $\D$.
Then, for each point $x_i$, a random length $l_i$ with distribution $\cU([0.5,1.5])$ is generated.
We sample, further, five random angles $\ga_1,\dots,\ga_5$ with uniform distribution on the set $[-\frac{\pi}{9},-\frac{\pi}{36}]\cup[\frac{\pi}{36},\frac{\pi}{9}]$.
We define $x_i$ as the center of a line with length $l_i$ rotated by $\ga_i$, where three of the five lines are orientated horizontally and the remaining two lines are vertical. 
Finally, the line segments outside of $\D$ are removed and each random line is assigned a positive preset width of $0.04$, which results in a "trench structure" of the diffusion coefficient as depicted in Fig.~\ref{fig:2D_fpm}.
On the trenches, we set the jump height to $P_1=100$, while the jump heights on the remaining quadrangles of the partition is set to $1$. 
Fixing the jump heights captures increasing permeability in the cracks of a certain medium, the Gaussian field still accounts for some uncertainty within each partition element of the domain. 
The source function is given as $f\equiv5$. 
We  split $\partial\D$ by $\gG_1:=\{0,1\}\times[0,1]$ and $\gG_2:=(0,1)\times\{0,1\}$ and impose the (pathwise) mixed Dirichlet-Neumann boundary conditions  
\be\label{eq:bound}
u(\go,\cdot)=\begin{cases}
	0.1 \text{\quad on $\{0\}\times[0,1]$}\\
	0.3 \text{\quad on $\{1\}\times[0,1]$}\\
\end{cases}\quad\text{and}\quad a(\go,\cdot)\vv n \cdot\grad u(\go,\cdot)=0 \text{\quad on $\gG_2$}
\ee
for each $\go\in\gO$.
A sample of the approximated solution is displayed in Fig.~\ref{fig:2D_fpm}.
\begin{figure}[ht]
	\centering
	\subfigure{\includegraphics[height=0.22\textheight, width=0.49\textwidth,]{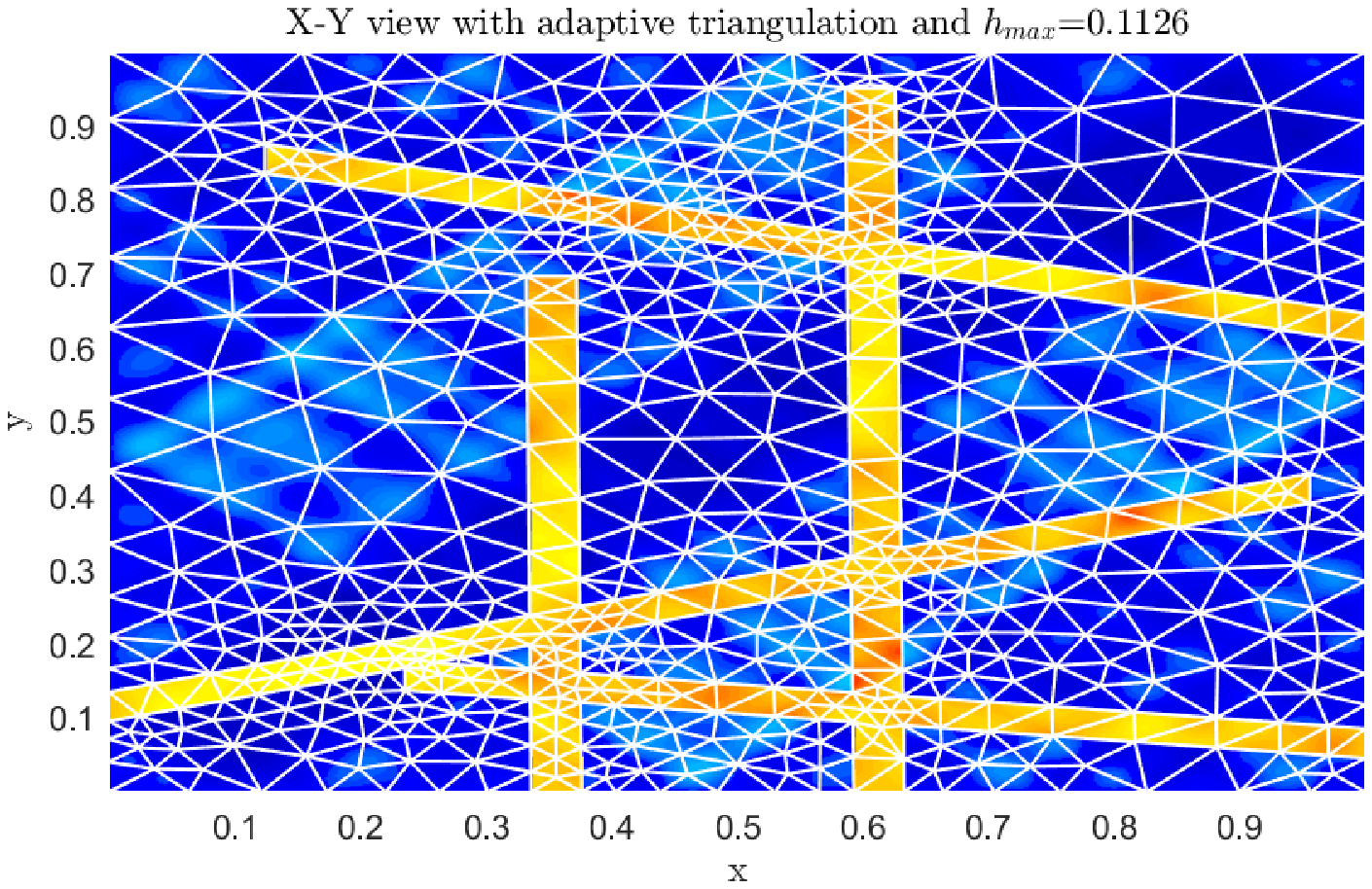}}
	\subfigure{\includegraphics[height=0.22\textheight, width=0.49\textwidth,]{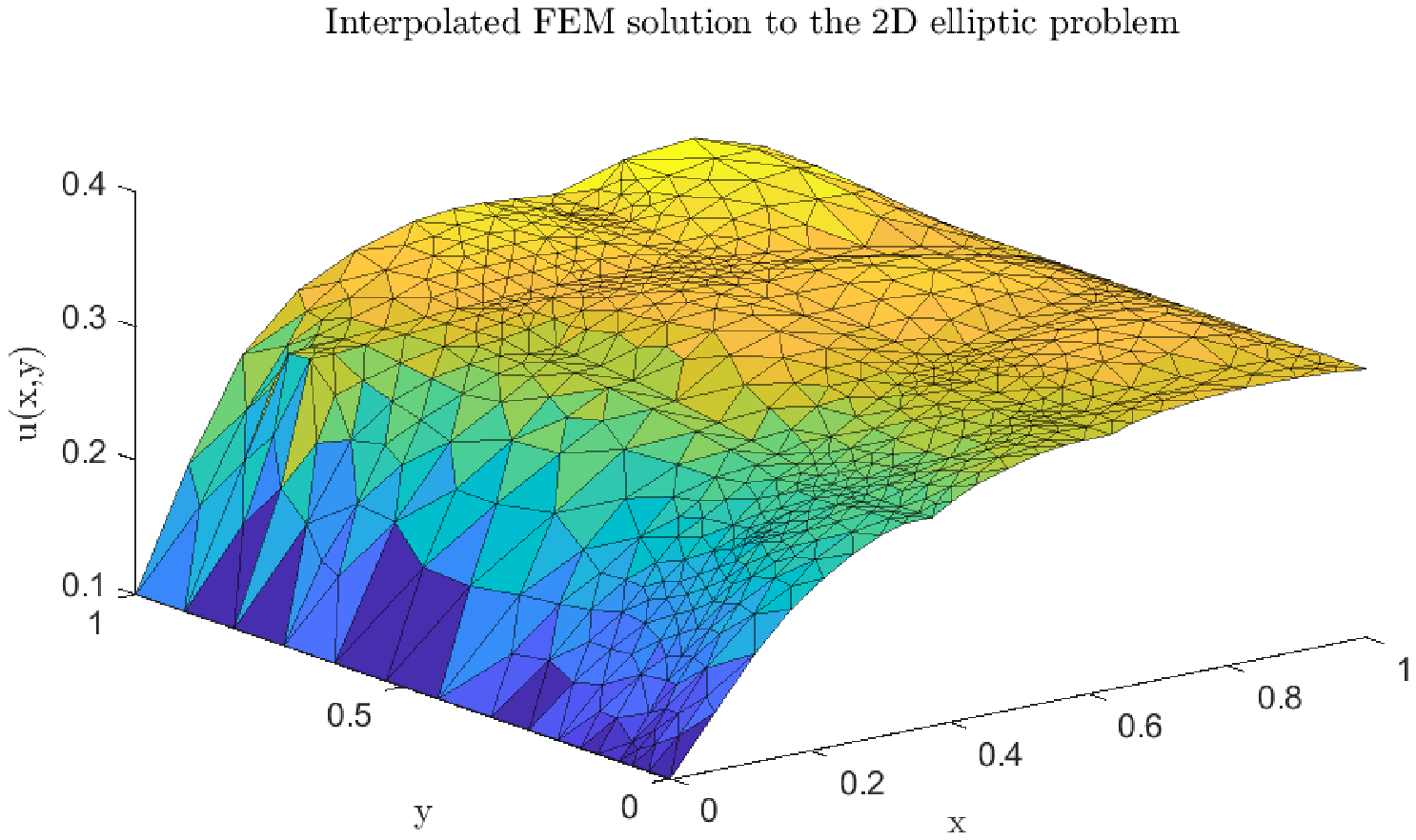}}
	\caption{Left: Sample of the fractured porous medium diffusion coefficient with adaptive triangulation.
		Right: Approximated FEM solution to the sample of $a$.}
	\label{fig:2D_fpm}
\end{figure}

Compared to the first 2D example, there is now a larger gap between the RMSE curves of the adaptive and non-adaptive estimators, see Fig.~\ref{fig:conv_fpm}.
The asymptotic rate for the error is again close to order one for both adaptive methods, while we obtain $0.6$ for the non-adaptive algorithms. This is possibly due to the higher magnitude of the discontinuities in the diffusion coefficient compared to the first example.
Bootstrapping now leads to a higher RMSE in each algorithm and this effect is more pronounced in the adaptive setting. Asymptotically, the rates of both bootstrapping estimators remain comparable to standard \MLMC.		
An adaptive triangulation for samples of this particular diffusion coefficient often entails very fine meshes, even if the desired maximum diameter of each triangle is comparably high, as Fig.~\ref{fig:2D_fpm} and Fig.~\ref{fig:conv_fpm} illustrate.
Due to this increase in complexity when using adaptive FEM, the simulation times for the respective estimators are now considerably longer as in the previous scenario, see Fig.~\ref{fig:conv_fpm}.
Nevertheless, the adaptive methods still have significantly better time-to-error ratios, but now the standard adaptive method outperforms the corresponding bootstrapping estimator.
\begin{figure}[ht]
	\centering
	\subfigure{\includegraphics[height=0.22\textheight, width=0.49\textwidth,]{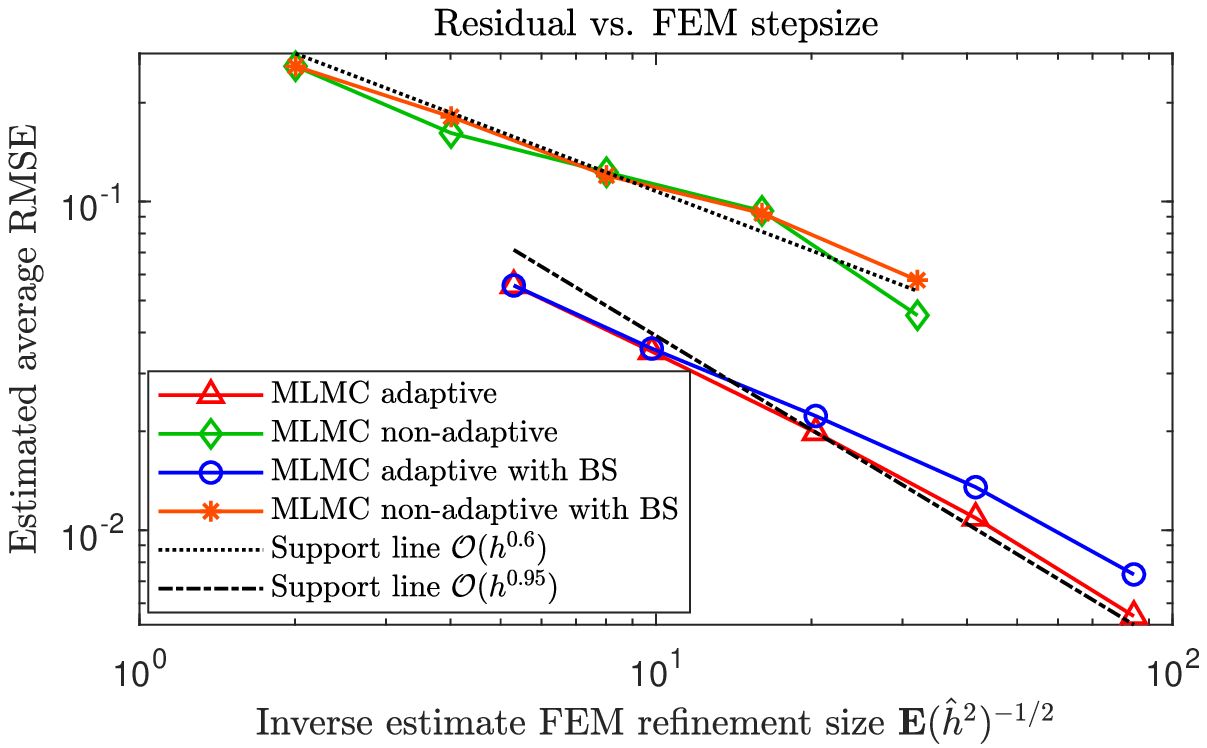}}
	\subfigure{\includegraphics[height=0.22\textheight, width=0.49\textwidth,]{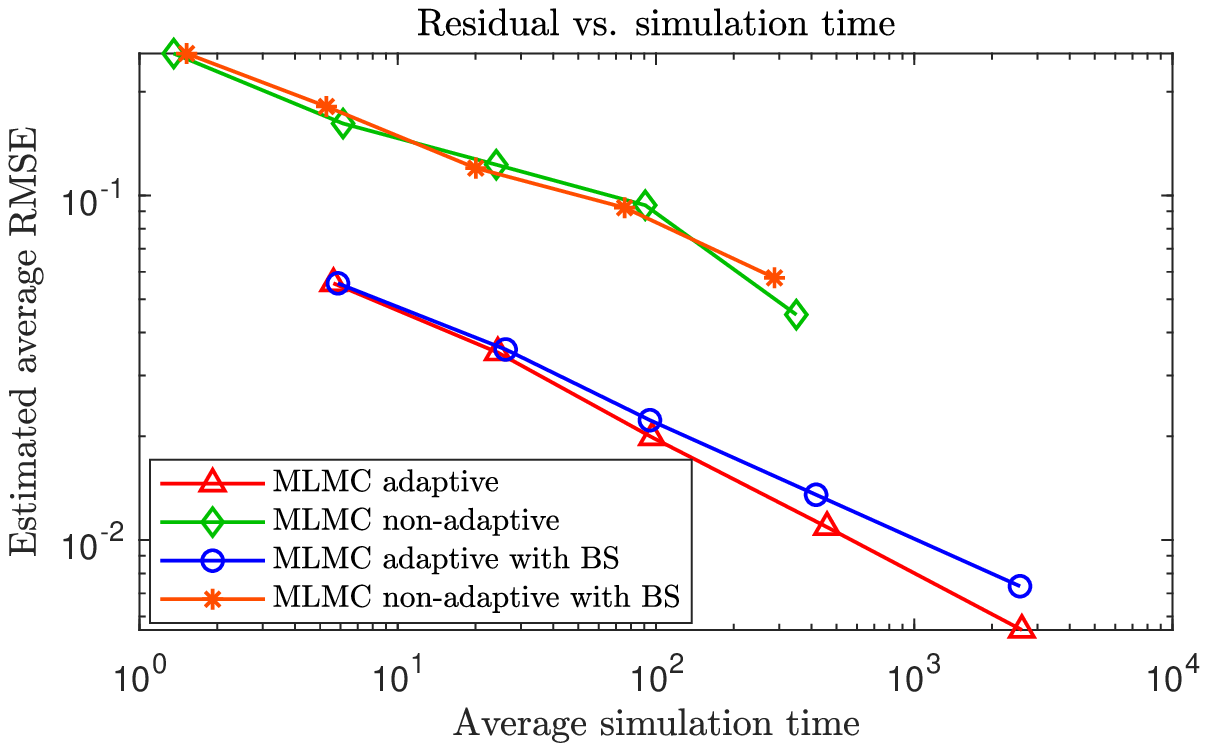}}
	\caption{Left: RMSE of the fractured porous media example, Right: Time-to-error plot.}
	\label{fig:conv_fpm}
\end{figure}

As a last 2D example, we discuss a medium with inclusions.
To this end, we sample a discrete uniformly-distributed random variable $\tau$ where $\tau\in\{1,2,3,4\}$ and define $\gl:=\frac{\E(\tau)}{0.64}\gL|_{(0.1,0.9)^2}$.
Scaling and restricting the Lebesgue measure on $(0.1,0.9)^2$ means that we now draw $\tau$-many uniformly distributed points within the the sub-domain $(0.1,0.9)^2\subset\D$.
To each of this points we assign a circle of random radius. The radii are $\cU([0.075,0.1])$-distributed. 
On each circle we assign a jump height of 1, while this parameter is set to $20$ outside of the circles.   
We assume the same Neumann--Dirichlet boundary conditions as in our second 2D-example (see Eq.~\eqref{eq:bound}) and as a source term we set $f\equiv10$. 
A sample of this jump-diffusion coefficient with corresponding FEM solution is shown in Fig.~\ref{fig:2D_circle}.
\begin{figure}[ht]
	\centering
	\subfigure{\includegraphics[height=0.22\textheight, width=0.49\textwidth,]{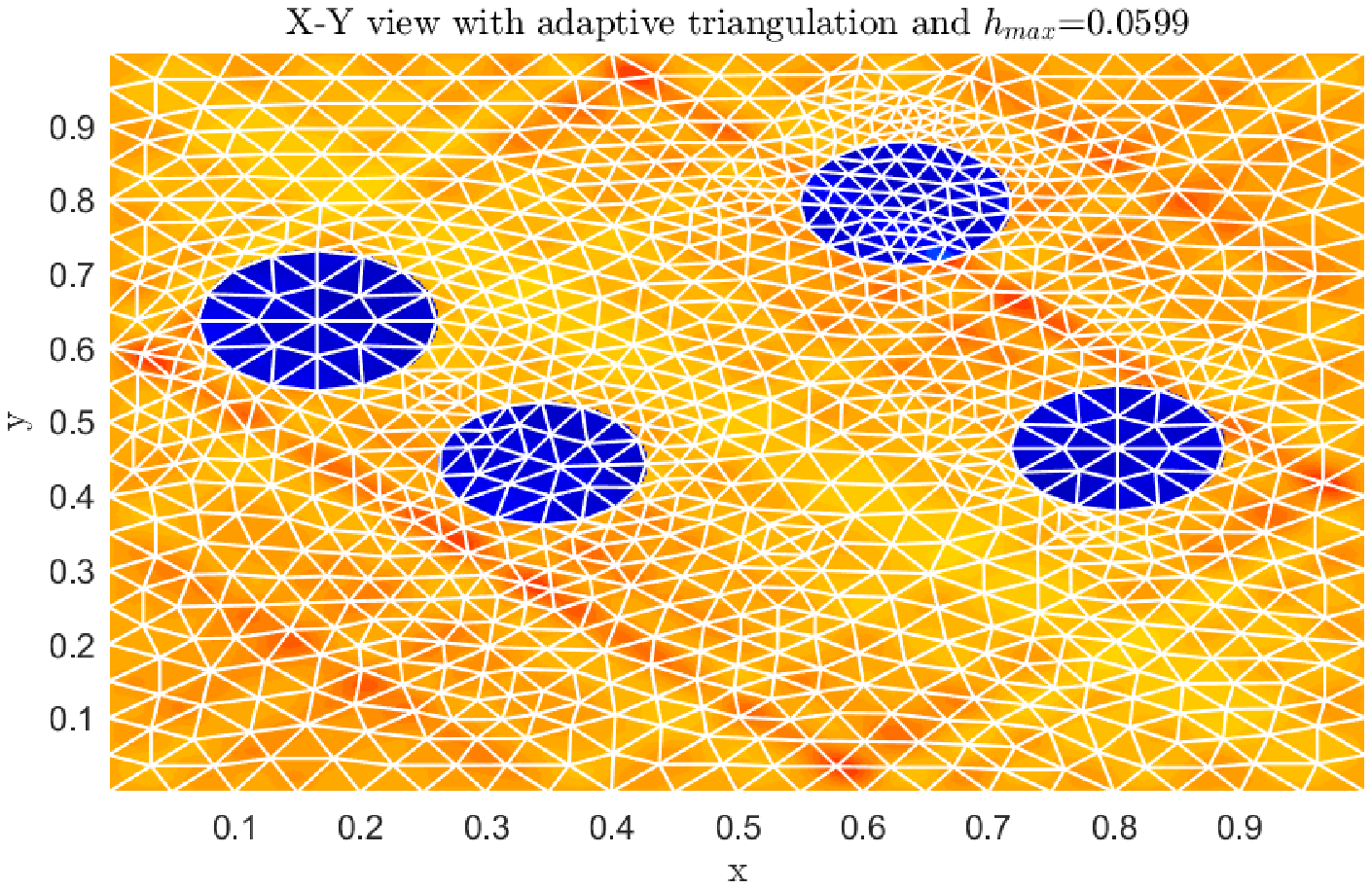}}
	\subfigure{\includegraphics[height=0.22\textheight, width=0.49\textwidth,]{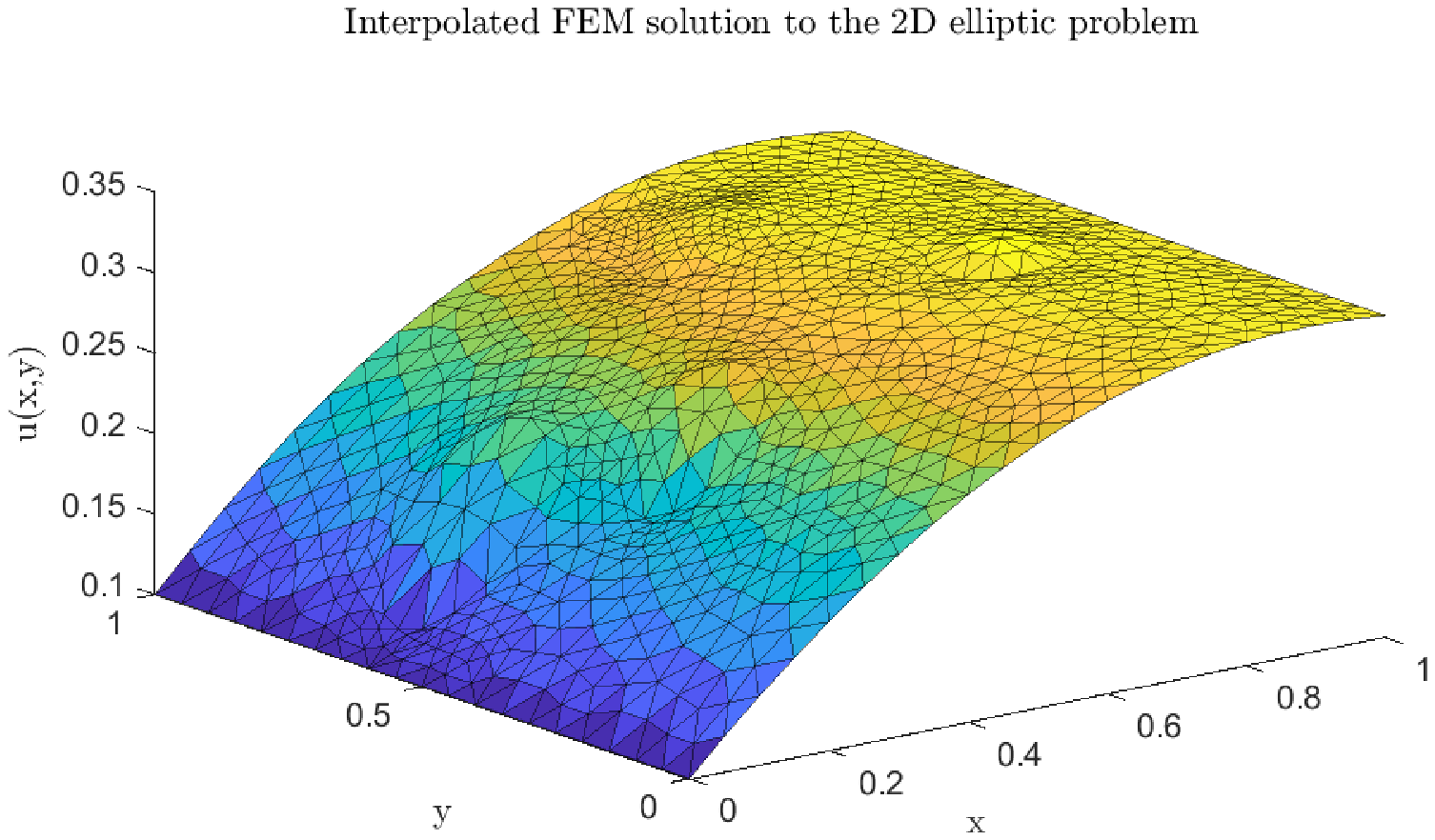}}
	\caption{Left: Sample of the diffusion coefficient of a medium with inclusions with adaptive triangulation.
		Right: Approximated FEM solution to the sample of $a$.}
	\label{fig:2D_circle}
\end{figure}

As Fig.~\ref{fig:conv_circle} indicates, we obtain a similar behavior of convergence as in the previous example: 
The RMSE of the adaptive estimators is again significantly lower on all levels and the non-adaptive has again an asymptotic RMSE of order $0.6$. 
For the adaptive methods, we obtain log-linear error decay of order $\cO(h|\log(h)|^{\frac{1}{2}})$.
This corresponds to the expected pathwise rate for an adaptive FEM solution of this diffusion problem, as the discontinuities have $C^2$-boundaries.  
Bootstrapping slightly reduces the RMSE of the non-adaptive method, but has little effect on the adaptive \MLMC\, estimator. 
The computational complexity of this scenario is comparable to the heterogeneous media example  and thus significantly lower as in case of the fractured porous medium. 
Finally, the adaptive algorithms again attain better time-to-error with the bootstrapping estimator slightly outperforming the standard adaptive method, see Fig.~\ref{fig:conv_circle}.
\begin{figure}[ht]
	\centering
	\subfigure{\includegraphics[height=0.22\textheight, width=0.49\textwidth,]{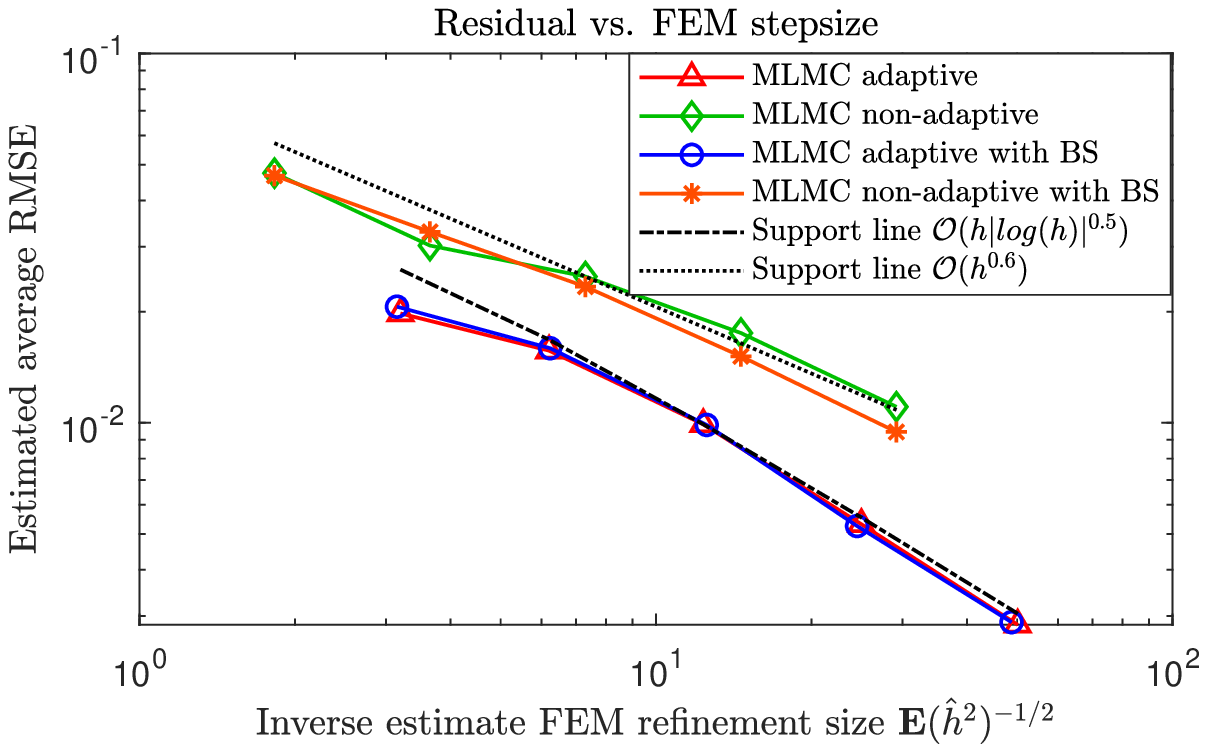}}
	\subfigure{\includegraphics[height=0.22\textheight, width=0.49\textwidth,]{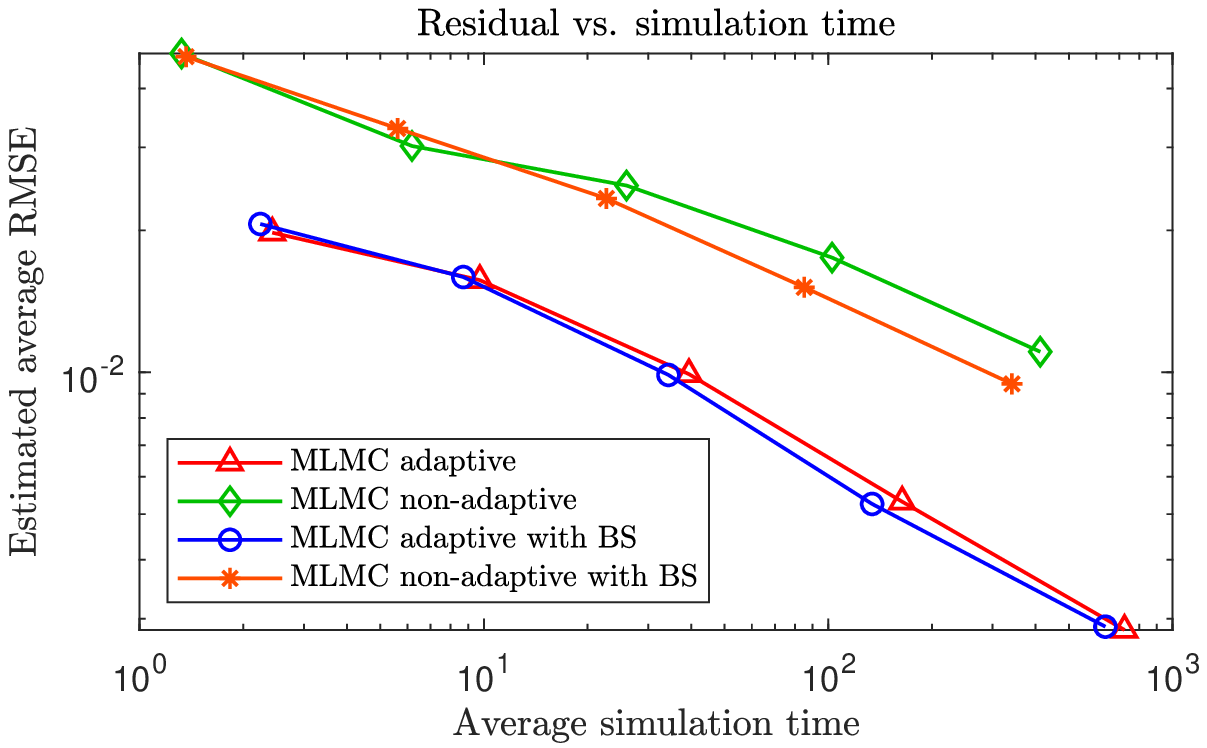}}
	\caption{Left: RMSE of the example for a medium with inclusions, Right: Time-to-error plot.}
	\label{fig:conv_circle}
\end{figure}

\section*{Acknowledgments}
The research leading to these results has received funding from the German Research Foundation (DFG) as part of the Cluster of Excellence in Simulation Technology (EXC 310/2) at the University of Stuttgart and by the Juniorprofessorship program of Baden--W\"urttemberg, and it is gratefully acknowledged. The authors would like to thank two anonymous referees for their remarks which led to a significant improvement of the manuscript.  

\bibliographystyle{siamplain}
\bibliography{elliptic_jump_diffusion}

\end{document}